\documentclass[10pt]{amsart}
\usepackage{amsfonts,amsthm,amsmath,color,graphics,hyperref,pict2e}
\usepackage[all]{xy}
\usepackage{indentfirst}

\newcommand{\myperm}{P}

\setlength{\parindent}{3ex}
\newlength{\halfbls}\setlength{\halfbls}{.5\baselineskip}


\newtheorem{theorem}{Theorem}[section]
\newtheorem{condtheorem}[theorem]{Conditional Theorem}
\newtheorem*{NoNumberTheorem}{Theorem}
\newtheorem{conjecture}[theorem]{Conjecture}
\newtheorem*{mainconjecture}{Main Conjecture}
\newtheorem{corollary}[theorem]{Corollary}
\newtheorem{lemma}[theorem]{Lemma}
\newtheorem{proposition}[theorem]{Proposition}
\newtheorem*{NoNumberProposition}{Proposition}

\theoremstyle{remark}

\newtheorem{remark}[theorem]{Remark}
\newtheorem{problem}[theorem]{Problem}

\newtheorem{example}[theorem]{Example}

\begin{document}
\title[Curvature, Lyapunov exponents and Harder--Narasimhan filtration]
{Eigenvalues of Curvature, Lyapunov exponents and Harder-Narasimhan filtrations}
\author{Fei Yu}

\address{School of Mathematical Sciences, Zhejiang University, Hangzhou, 310027, People's Republic of China}
\email{ yufei@zju.edu.cn,vvyufei@gmail.com}
\address{School of Mathematical Sciences, Xiamen University, Xiamen, 361005, People's Republic of China}
\date{October 10, 2016\\2010 Mathematics Subject Classification. Primary 	32G15, 30F60 , 14H10; Secondary  37D25, 53C07.\\
Key words and phrases. moduli space of Riemann surface, Teichm\"uller geodesic flow, eigenvalue of curvature,  Lyapunov exponent, Harder--Narasimhan filtration.\\
Supported by the Fundamental Research Funds for the Central Universities (No.
20720140526).}

\begin{abstract}
Inspired by Katz--Mazur theorem on crystalline cohomology and by
Eskin--Kontsevich--Zorich's numerical experiments, we conjecture
that the polygon of Lyapunov spectrum  lies above (or on) the
Harder--Narasimhan  polygon of the Hodge bundle over any
Teichm\"uller curve.  We also discuss the connections between the
two polygons and the integral of eigenvalues of the curvature of
the Hodge bundle by using Atiyah--Bott, Forni and M\"oller's works.
We obtain several applications to Teichm\"uller dynamics
conditional to the conjecture.
\end{abstract}
 \maketitle

 \tableofcontents

\section{Introduction}

\noindent Let $\mathcal{M}_g$ be the moduli space of Riemann surfaces
of genus $g$, and let $\mathcal{H}_g\rightarrow
\mathcal{M}_g$ be the bundle of pairs $(X,\omega)$,
where $\omega\neq 0$ is a holomorphic 1-form on $X\in \mathcal{M}_g$.
Denote by $\mathcal{H}_g(m_1,...,m_k)\hookrightarrow
\mathcal{H}_g$ the stratum of pairs $(X,\omega)$
for which the nonzero holomorphic $1$-form $\omega$
has $k$ distinct zeros of order $m_1,...,m_k$ respectively
(see~\cite{KZ03} for details).

There is a natural action of
$\operatorname{GL}_2^+(\mathbb{R})$ on
$\mathcal{H}_g(m_1,...,m_k)$, whose orbits project to complex
Teichm\"uller geodesics. The action of the
one-parameter diagonal subgroup of $\operatorname{SL}_2(\mathbb{R})$
defines the \textit{Teichm\"uller geodesic flow}; its orbit project
to geodesics in Teichm\"uller metric on $\mathcal{M}_g$. The
projection of the orbit of almost every point is
dense in the connected component of the ambient stratum.
Teichm\"uller geodesic flow has strong connections with flat
surfaces, billiards in polygons and interval exchange transformations
(see~\cite{Zo06} for a survey).

Fix an $\operatorname{SL}_2(\mathbb{R})$-invariant
finite ergodic measure $\mu$ on $\mathcal{H}_g$.
Zorich introduced the Lyapunov exponents for the Teichm\"uller
geodesic flow on $\mathcal{H}_g$
$$1=\lambda_1\geq \lambda_2\geq ...\geq\lambda_g\geq 0,$$
which measure the
logarithm of the growth rate of the Hodge norm of cohomology classes
under the parallel transport along the geodesic flow, see~\cite{Zo94}.

It is possible to evaluate Lyapunov exponents approximately through
computer simulation of the corresponding dynamical system. Such
experiments with Rauzy--Veech--Zorich induction (a discrete model of
the Teichm\"uller geodesic flow) performed in \cite{Zo96}, indicated
a surprising rationality of the sums $\lambda_1+...+\lambda_g$ of
Lyapunov exponents of the Hodge bundle with respect to the
Teichm\"uller geodesic flow on strata of Abelian and quadratic
differentials \cite{KZ97}.  An explanation of this phenomenon was
given by Kontsevich in \cite{Ko97} and then developed by Forni
\cite{Fo02}: this sum is, essentially, the
characteristic number of the determinant of the
Hodge bundle. Recently Eskin, Kontsevich and Zorich
have found the connection between the sum of
Lyapunov exponents and Siegel--Veech constants by establishing an
analytic Riemann--Roch formula, see~\cite{EKZ11}.

Zorich conjectured strict positivity of $\lambda_g$ and simplicity of
the spectrum of Lyapunov exponents for connected components of the
strata of Abelian differentials. Forni proved the first conjecture
in~\cite{Fo02}, Avila and Viana proved the second one in~\cite{AV07}.

We reproduce in the tables in Appendix the
approximate values of all individual Lyapunov exponents for connected
components of the strata of small genera using~\cite{KZ97}
and~\cite{EKZ11} as a source. Though the \textit{sum} of the top $g$
Lyapunov exponents is always rational for the strata, for the
Teichm\"uller curves, and, conjecturally, for all
$\operatorname{GL}(2,\mathbb R)$-invariant orbifolds, the individual
Lyapunov exponents seem to be completely transcendental and there are
no tools which would allow to evaluate them explicitly except several
very particular cases which we describe below.

Exact values of individual Lyapunov exponents can be
computed rigorously for certain invariant suborbifolds of the strata
of Abelian differentials. For example, Bainbridge~\cite{Ba07}
succeeded to perform such computation for suborbifolds in genus two.
(Since $\lambda_1$ is identically equal to $1$ for any
$\operatorname{GL}(2,\mathbb R)$-invariant orbifold, computation of $\lambda_2$
in genus $2$ is equivalent to the computation of the sum $\lambda_1+\lambda_2$.)

Such computation was also performed for certain special
Teichm\"uller curves. For the Teichm\"uller curves related to
triangle groups it was done by Bouw and M\"oller~\cite{BM10}, and by
Wright~\cite{Wr12b}; for square-tiled cyclic covers in~\cite{EKZ11}
and in~\cite{FMZ11a}; for square-tiled abelian covers by Wright
\cite{Wr12a}; for some wind-tree models by Delecroix, Hubert and
Leli\`evre~\cite{DHL11}.

Recall the definition of a Teichm\"uller curve in
$\mathcal{M}_g$. If the stabilizer
$\operatorname{SL}(X,\omega)\subset \operatorname{SL}_2(\mathbb{R})$
of a given pair $(X,\omega)$ forms a lattice, then
the projection of the orbit
$\operatorname{SL}_2(\mathbb{R})\cdot (X,\omega)$ to $\mathcal{M}_g$
gives a closed, algebraic curve called a
\textit{Teichm\"uller curve}. The relative canonical bundle over a
Teichm\"uller curve has a particularly simple and elegant
form~\eqref{canonical}; see Chen-M\"oller~\cite{CM11},
Eskin-Kontsevich-Zorich~\cite{EKZ11}. For any
Teichm\"uller curve, Kang Zuo and the author have introduced $g$
numbers:
$$1=w_1 \geq w_2\geq ...\geq w_g\geq 0\, ,$$
where $w_i$ is obtained by normalizing the slopes of
the Harder-Narasimhan filtration of the Hodge bundle. We can get
upper bounds of each $w_i$ by using some filtrations of the Hodge
bundle constructed using the special structure of
the relative canonical bundle formula \cite{YZ12a} \cite{YZ12b}.

Now we have a collection of numbers $\lambda_i$, where
$i=1,\dots,g$, measuring the stability of dynamical system and
a collection of numbers $w_i$, where $i=1,\dots,g$,
measuring the stability of algebraic geometry.
Tables in the Appendix provide
the numerical data for the numbers $\lambda_i$ corresponding to the
low genera strata and for the numbers $w_i$ corresponding to
Teichm\"uller curves in the corresponding strata.
It is natural to address a question, whether
there any relations between them?

Define the \textit{Lyapunov polygon} of the Hodge bundle over a
Teichm\"uller curve as the convex hull of the collection of points in
$\mathbb{R}^2$ having coordinates $(0,0)$, $(1,\lambda_1)$,
$(2,\lambda_1+\lambda_2), \dots, (g,\lambda_1+\dots+\lambda_g)$.
Similarly,
define the \textit{Harder--Narasimhan polygon} of the Hodge bundle over a
Teichm\"uller curve as the convex hull of the collection of points in
$\mathbb{R}^2$ having coordinates $(0,0)$, $(1,w_1)$,
$(2,w_1+w_2), \dots, (g,w_1+\dots+w_g)$.

Inspired by the Katz--Mazur theorem \cite{Ma72},
\cite{Ma73} which tells us that the Hodge polygon  lies above (or on)
the Newton polygon of the crystalline cohomology, we make the
following conjecture supported by all currently
available numerical data.

\begin{figure}[hb]
   %
   %
\includegraphics{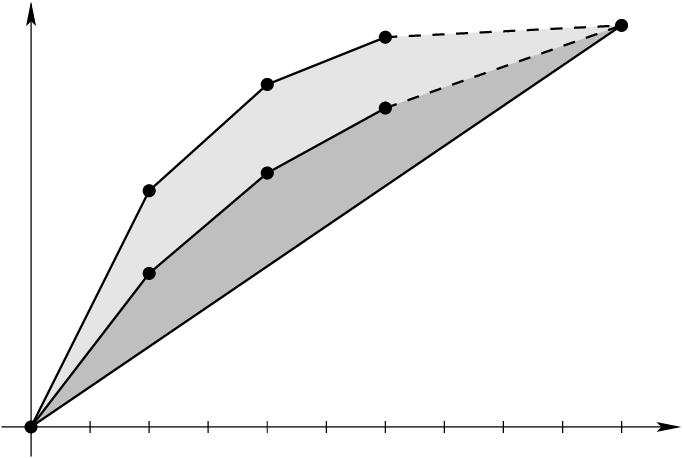}
\begin{picture}(0,0)(-43,-13)
\put(-50,-30){$P_\lambda$}
\put(-50,-50){$P_w$}
\put(-50,-50){$P_w$}
\put(-87.5,-102){\small $1$}
\put(-59,-102){\small $2$}
\put(-29.5,-102){\small $3$}
\put(-5,-102){\small $\dots$}
\put(27,-102){\small $g$}
\end{picture}
\vspace{100pt}
 \caption{
\label{fig:lambda:and:w:polygons}
We conjecture that the Lyapunov polygon $P_{\lambda}$ lies above
(or on) the Harder-Narasimhan  polygon  $P_{w}$.}
\end{figure}

\begin{mainconjecture}
For any Teichm\"uller curve,
the Lyapunov polygon of the Hodge
bundle lies above (or on) the
Harder--Narasimhan  polygon\footnote{By the time the manuscript
was submitted to the journal, a
proof of this conjecture was
announced by Eskin--Kontsevich--M\"oller--Zorich in~\cite{EKMZ}.}.
\end{mainconjecture}

Warning. Different articles have different definitions of ``lie
above'' and ``lie below'' for convex polygons. In the
context of the Conjecture above ``lies above'' is synonymous to
``contains as a subset'' since it is known that the two polygons share the rightmost and the
leftmost vertices, see Figure~\ref{fig:lambda:and:w:polygons}.
We discuss the notion ``lies above'' in a more general context in
Sections~\ref{s:slope:filtrations} and~\ref{s:convexity}.

In analytic terms, our Main Conjecture conjecture claims
that the following system of inequalities
is valid for any Teichm\"uller curve:
$$
\left\{
  \begin{aligned}[lr]
    \sum^i_{j=1} \lambda_j\geq \sum^i_{j=1} w_j & \text{ for } i=1,...,g-1; \\
    \sum^g_{j=1} \lambda_j= \sum^g_{j=1} w_j    &.
  \end{aligned}
\right. .
$$
where the equality for the last term $i=g$ is obtained by combining the Kontsevich formula for the sum of
the Lyapunov exponents of the Hodge bunde over a Teichm\"uller curve
(see Theorem~\ref{sumly} below) and the definition of the normalized
Harder--Narasimhan slopes $w_i$ (see section~\ref{ss:HNfiltrations};
see also~\cite{YZ12a} and~\cite{YZ12b}).

Equivalently, one can rewrite the latter system of inequalities as
$$
\overset{g}{\underset{j=i}{\sum}}\lambda_j\leq \overset{g}{\underset{j=i}{\sum}} w_j, \text{ for } i=2,...,g\,,
\text{ and }\overset{g}{\underset{j=1}{\sum}}\lambda_j=\overset{g}{\underset{j=1}{\sum}} w_j\,.
$$

The Main Conjecture is stated for the
Teichm\"uller curves. However, using the corollaries of recent
rigidity theorems of Eskin--Mirzakhani--Mohammadi the
statement of the Main Conjecture implies analogous estimates for other
$\operatorname{GL}(2,\mathbb{R})$-invariant suborbifolds, in
particular for the connected components of the strata. To illustrate
such applications we first recall the rigidity results.

\begin{NoNumberTheorem}{\cite[Theorem 2.3]{EMM13}}
\label{EMM}
Let $\mathcal{N}_n$ be a sequence of affine
$\operatorname{SL}(2,\mathbb{R})$-invariant manifolds, and suppose
$\nu_{\mathcal{N}_n}\rightarrow\nu$. Then $\nu$ is a probability
measure. Furthermore, $\nu$ is the affine
$\operatorname{SL}(2,\mathbb{R})-invariant$ measure
$\nu_{\mathcal{N}}$, where $\mathcal{N}$ is the smallest submanifold
with the following property: there exists some $n_0 \in\mathbb{N}$
such that $\mathcal{N}_n\subset \mathcal{N}$ for all $n>n_0$.
\end{NoNumberTheorem}

Bonatti, Eskin and Wilkinson use this theorem and a theorem of
Filip~\cite{Fi13a}, to give the following affirmative
answer to the question addressed by Matheus,
M\"oller and Yoccoz in~\cite{MMY13}.

\begin{theorem}[\cite{BEW14}]
\label{EBW}
Let $\mathcal{N}_n$ be a sequence of affine $\operatorname{SL}(2,\mathbb{R})$-invariant
manifolds, and suppose $\nu_{\mathcal{N}_n}\rightarrow\nu$. Then the
Lyapunov exponents of $\nu_{\mathcal{N}_n}$ converge to the Lyapunov
exponents of $\nu$.
\end{theorem}

As a corollary (conditional to the Main Conjecture) we prove the following conjecture
of Konsevich and Zorich~\cite{KZ97}.

\begin{corollary}
\label{cor:lambda:k:tends:to:1}
The Main Conjecture implies\footnote{As we already mentioned,
by the time the manuscript
was submitted to the journal, a
proof of this conjecture was
announced in~\cite{EKMZ}, so the Corollary becomes unconditional.}, in particular, that for any fixed positive
integer $k$ the Lyapunov exponent $\lambda_k$ of the Hodge bundle over
the hyperelliptic connected components $\mathcal{H}_g^{hyp}(2g-2)$ and
$\mathcal{H}_g^{hyp}(g-1,g-1)$ tends to $1$ as the genus tends to infinity:
$
\lambda_k \to 1\text{ as } g\to\infty \text{ for any fixed } k\in\mathbb{N}\,.
$
\end{corollary}

Note that for all other components of all other
strata of Abelian differentials the Lyapunov exponent conjecturally
tends to $\frac{1}{2}$ and not to $1$, see~\cite{KZ97}. It is a
challenging problem to deduce this asymptotics from the Main
Conjecture.

We prove Corollary~\ref{cor:lambda:k:tends:to:1},
obtain further results conditional to the Main Conjecture,
and state some further conjectures
in Section~\ref{s:Conditional:results}.

The Main Conjecture was first announced by the
author at the Oberwolfach conference \cite{Yu14}.
After that we realize that this conjecture is
analogous to the work of Atiyah--Bott on Hermitian Yang--Mills
metric~\cite{AB82}. This analogy was independently
noticed by M\"oller. Here we state the result of Atiyah--Bott in a
form for which the analogy is more transparent. Let $\varepsilon_j$, where
$1\leq j\leq g$, be the normalized integral of the $j$-th eigenvalue
of the curvature of the Hodge bundle
over a Teichm\"uller curve, see Forni~\cite{Fo02}
who proves the bounds
It follows that
$$1=\varepsilon_1\geq\varepsilon_2\geq...\geq\varepsilon_g\geq 0.$$
It follows from~\cite{AB82} that
$$ \left\{
  \begin{aligned}
    \sum^i_{j=1} \varepsilon_j\geq \sum^i_{j=1} w_j & \text{ for } i=1,...,g-1; \\
    \sum^g_{j=1} \varepsilon_j= \sum^g_{j=1} w_j    &.
  \end{aligned}
  \right.
$$
Thus, upper bounds for $w_i$ obtained
in~\cite{YZ12b} provide some information about $\varepsilon_i$.
Recall that partials sums of $\lambda_i$ and of $\varepsilon_i$  are
also related, see~\cite{Fo02} or Section~\ref{ss:varepsilon:geq:lambda}
for an outline of these results.

In Section~\ref{s:Teichmuller:curves} we review the definition of
Teichm\"uller curves, the formula for its relative
canonical bundle and the structure of natural
filtrations of the Hodge bundle over a Teichm\"uller
curve. Section~\ref{s:slope:filtrations} summarizes facts about
slope filtrations, especially Harder--Narasimhan filtrations.
It also recalls necessary facts about the integrals
of eigenvalues of the curvature. In Section~\ref{s:convexity} we
discuss various manifestations of convexity in
geometry and arithmetics. Section~\ref{ss:varepsilon:geq:w}
compares polygons of eigenvalue spectrum and
Harder--Narasimhan polygons. Section~\ref{ss:varepsilon:geq:lambda}
studies the relation between polygons of eigenvalue
spectrum and Lyapunov polygons. Finally,
Section~\ref{Hodge:and:Newton:polygons} compares
Hodge and Newton polygons. We start
Section~\ref{s:Conditional:results} with more detailed discussion of
the Main Conjecture. We proceed obtaining several applications
(conditional to the Main Conjecture) to Teichm\"uller dynamics. In
paricular, we present the proof of the old conjecture of
Kontsevich--Zorich stated in Corollary~\ref{cor:lambda:k:tends:to:1}.
We also prove a simple corollary $\lambda_i>0$ implies $w_i>0$ by
using Higgs bundles and we reprove
Eskin--Kontsevich--Zorich formula for the difference between sums of $\lambda^+$
and $\lambda^-$ Lyapunov exponents for Teichm\"uller curves and for connected
components. We provide certain numerical evidence for
the Main Conjecture in the Appendix.

\section{Teichm\"uller curves}
\label{s:Teichmuller:curves}

Teichm\"uller geodesic flow has close connections with flat surfaces,
billiards in polygons and interval exchange transformations;
see survey~\cite{Zo06} of Zorich covering many
important ideas of this field; see also survey~\cite{Mo12} by
M\"oller devoted to Teichm\"uller curves mainly from
the view point of algebraic geometry.

Denote by $\mathcal{H}_g(m_1,...,m_k)$ the stratum parameterized by
$(X,\omega)$ where $X$ is a curve of genus $g$ and $\omega$ is an
Abelian differential (i.e. a holomorphic one-form) on $X$ that has
$k$ distinct zeros of orders $m_1,...,m_k$. Let
$\overline{\mathcal{H}}_g(m_1,...,m_k)$ be the Deligne-Mumford
compactification of $\mathcal{H}_g(m_1,...,m_k)$.  Denote by
$\mathcal{H}^{hyp}_g(m_1,...,m_k)$ ( resp. odd, resp. even) the
hyperelliptic (resp.  odd theta characteristics,
resp. even theta characteristics) connected
component, see~\cite{KZ03}.

Let $\mathcal{Q}(d_1,...,d_n)$ be the stratum parameterizing $(Y,q)$
where $Y$ is a curve of genus $h$ and $q$ is a
meromorphic quadratic differentials with at most simple
poles on $Y$ that have $k$ distinct zeros of orders
$d_1,...,d_n$ respectively. If the quadratic differential is not a
global square of a $1$-form, there is a canonical double covering
$\pi\colon X\rightarrow Y$ such that $\pi^*q=\omega^2$,
where $\omega$ is already a holomorphic $1$-form.
This covering is ramified precisely at the zeros of odd order of $q$
and at the poles. It induces a map
\begin{equation}
\label{eq:Q:to:H}
\phi\colon\mathcal{Q}(d_1,...,d_n)\rightarrow\mathcal{H}_g(m_1,...,m_k)\,.
\end{equation}
A singularity of order $d_i$ of $q$ gives rise to two zeros of degree
$m=d_i/2$ when $d_i$ is even, and to a single zero
of degree $m=d_i+1$ when $d_i$ is odd. In
particular, any hyperelliptic locus in a stratum
$\mathcal{H}_g(m_1,...,m_k)$ is induced from a stratum
$\mathcal{Q}(d_1,...,d_n)$ satisfying $d_1+...+d_n=-4$, see~\cite{EKZ11}.

There is a natural action of $\operatorname{GL}_2^+(\mathbb{R})$ on
$\mathcal{H}_g(m_1,...,m_k)$, whose orbits project to complex
geodesics\footnote{Developing the results of
Eskin--Mirzakhani~\cite{EM13} and
Eskin--Mirzakhani--Mohammadi~\cite{EMM13}, Filip proved
in~\cite{Fi13a}, \cite{Fi13b}, that the closure of any such complex
geodesic is an algebraic variety.} with respect to
the Teichm\"uller metric on $\mathcal{M}_g$. The action
of the one-parrameter diagonal subgroup
$\begin{pmatrix}e^t & 0\\ 0 & e^{-t}\end{pmatrix}$, where $t\in\mathbb{R}$,
defines the \textit{Teichm\"uller geodesic flow}; its orbits
project to real Teichm\"uller geodesics in in $\mathcal{M}_g$.

It follows from the fundamental Theorems of Masur~\cite{Ms82} and
Veech~\cite{Ve82} that the $\operatorname{GL}_2^+(\mathbb{R})$-orbit of
almost any point $(X,\omega)$ in any stratum
$\mathcal{H}_g(m_1,...,m_k)$ of Abelian differentials is dense in the
ambient connected component of the stratum. The stabilizer
$\operatorname{SL}(X,\omega)\subset \operatorname{SL}_2(\mathbb{R})$
of almost any point $(X,\omega)$ is trivial.

The situation with some exceptional points $(X,\omega)$ is opposite:
the stabilizer $\operatorname{SL}(X,\omega)\subset
\operatorname{SL}_2(\mathbb{R})$ is as large as possible, namely it
forms a lattice in $\operatorname{SL}_2(\mathbb{R})$. By the results
of Smillie and Veech~\cite{Ve89} this happens if and only if the
$\operatorname{GL}^+_2(\mathbb{R})$-orbit of $(X,\omega)$ is closed
in the ambient stratum. The projection of such closed orbit to the
modulis space $\mathcal{M}_g$ gives a closed algebraic curve $C$ called
the \textit{Teichm\"{u}ller curve}
$$
\rho\colon C=\mathbb{H}/\operatorname{SL}(X,\omega) \rightarrow \mathcal{M}_g\,.
$$
By expression of McMullen~\cite{Mc06}, Teichm\"uller curves represent
\textit{closed complex geodesics}, meaning that every
Teichm\"uller curve is totally geodesic with respect to the
Teichm\"{u}ller metric on $\mathcal{M}_g$.

After suitable base change and compactification, we can get a
universal family $f\colon S\rightarrow C$, which is a relatively
minimal semi-stable model with disjoint sections $D_1,...,D_k$,
where the restrictions  $D_i|_X$
to each fiber $X$, is a zero of order $m_i$ of $\omega$,
see~\cite[p.11]{CM11}, \cite{Mo06}.

Let $\mathcal{L}\subset f_*{\omega_{S/C}}$ be the  line bundle
over the Teichm\"uller curve $C$ whose fiber over
the point corresponding to $(X,\omega)\in C$ is $\mathbb{C}\omega$,
the generating differential of the Teichm\"uller curve $C$.
This line bundle it known to be
``maximal Higgs'' (see~\cite{Mo06}). Let $\Delta\subset C$ be the set
of points with singular fibers. By definition of
``maximal Higgs'' bundle one has $\mathcal{L}\cong
\mathcal{L}^{-1}\otimes\omega_C(\mathrm{log}{\Delta})$,
see~\cite{VZ04}. The latter isomorphism implies the
following equality for the degree of $\mathcal{L}$:
$$
\chi:=2\mathrm{deg} \mathcal{L}=2g(C)-2+|\Delta|\,.
$$

The following particularly simple formula for the
relative canonical bundle can be found
in~\cite[p.18-19]{CM11}, or in~\cite[p.33]{EKZ11}):
\begin{align}\label{canonical}
 \omega_{S/C}\simeq f^*\mathcal{L}\otimes \mathcal{O}_S(\overset{k}{\underset{i=1}{\sum}}m_i D_i).
\end{align}
By the adjunction formula we get
$$D^2_i=-\omega_{S/C}D_i=-m_iD^2_i-\mathrm{deg}{\mathcal{L}},$$
and the self-intersection number of $D_i$ is
$$D^2_i=-\frac{1}{m_i+1}\frac{\chi}{2}\,.$$

Let $h^0(\mathcal{V})$ be the dimension of $H^0(X,\mathcal{V})$. If $0 \leq d_i\leq m_i$, then from
the exact sequence
$$0\rightarrow f_*\mathcal{O}(d_1D_1+...+d_kD_k)\rightarrow f_*\mathcal{O}(m_1D_1+...+m_kD_k)=f_*\omega_{S/C}\otimes\mathcal{L}^{-1}$$
and the fact that all sub-sheaves of a locally free sheaf on a curve
are locally free, we deduce that
$$f_*\mathcal{O}(d_1D_1+...+d_kD_k)\text{
is a vector subbundle of rank } h^0(d_1p_1+...+d_kp_k)\,,$$
where
$p_i$ is the intersection point of the section $ D_i$ and a generic fiber $F$.
Varying $d_i$ in the vector subbundles as above,
we have constructed in~\cite{YZ12a} numerous
filtrations of the Hodge bundle.

Examining the fundamental exact sequence
$$0\rightarrow f_*\mathcal{O}(\sum (d_i-a_i)D_i)\rightarrow
f_*\mathcal{O}(\sum d_iD_i)\rightarrow f_*\mathcal{O}_{\sum
a_iD_i}(\sum d_iD_i)\overset{\delta}{\rightarrow}$$

$$R^1f_*\mathcal{O}(\sum (d_i-a_i)D_i)\rightarrow
R^1f_*\mathcal{O}(\sum d_iD_i)\rightarrow 0$$
one can deduce certain nice properties of these filtrations. In particular, we have
\begin{lemma}[\cite{YZ12a}]\label{HN}
The Harder-Narasimhan filtration of $f_*\mathcal{O}_{aD}(dD)$ is
$$0\subset f_*\mathcal{O}_{D}((d-a+1)D)\subset ...\subset f_*\mathcal{O}_{(a-1)D}((d-1)D)\subset f_*\mathcal{O}_{aD}(dD)$$
and the direct sum of the graded quotient of this filtration is
$$\mathrm{grad}(HN(f_*\mathcal{O}_{aD}(dD)))=\overset{a-1}{\underset{i=0}{\oplus}}
\mathcal{O}_{D}((d-i)D).$$
\end{lemma}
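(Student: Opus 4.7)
The plan is to verify that the proposed chain really is an ascending filtration of $f_*\mathcal{O}_{aD}(dD)$ with the asserted graded pieces, and then to recognise it as the Harder-Narasimhan filtration via uniqueness.

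First, for each $k=1,\ldots,a$, the natural inclusion $\mathcal{O}((d-a+k-1)D)\hookrightarrow\mathcal{O}((d-a+k)D)$ on $S$ descends, modulo the common subsheaf $\mathcal{O}((d-a)D)$, to a short exact sequence
$$0\to\mathcal{O}_{(k-1)D}((d-a+k-1)D)\to\mathcal{O}_{kD}((d-a+k)D)\to\mathcal{O}_D((d-a+k)D)\to 0.$$
Nesting these inclusions over $k$ exhibits the proposed chain as an ascending sequence of subsheaves of $\mathcal{O}_{aD}(dD)$. Applying $f_*$ preserves exactness here because every sheaf in sight is supported on some infinitesimal thickening of the section $D$, which $f$ maps isomorphically to $C$; in particular $R^1 f_*$ vanishes. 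The $k$-th graded quotient of the pushed-forward filtration is then $f_*\mathcal{O}_D((d-a+k)D)$, which, via $f|_D\colon D\xrightarrow{\sim}C$, is a line bundle on $C$ of degree $(d-a+k)\,D^2$. Summing over $k=1,\ldots,a$ recovers the claimed direct sum $\bigoplus_{i=0}^{a-1}\mathcal{O}_D((d-i)D)$.

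To recognise this filtration as the Harder-Narasimhan one: every graded quotient is a line bundle, hence automatically semistable, so by uniqueness it suffices to check that the slopes strictly decrease from the bottom subsheaf to the top, i.e.
$$(d-a+1)D^2>(d-a+2)D^2>\cdots>dD^2.$$
This is equivalent to $D^2<0$, which is exactly the content of the self-intersection formula $D^2=-\tfrac{1}{m+1}\cdot\tfrac{\chi}{2}$ derived above from the relative canonical bundle formula (\ref{canonical}), together with $\chi=2\deg\mathcal{L}>0$ (as $\mathcal{L}$ is maximal Higgs on a Teichm\"uller curve, equivalently the base orbifold is hyperbolic).

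The main, and essentially only non-formal, ingredient is thus the negativity $D^2<0$; the rest is bookkeeping with the exact sequences above. The one subtlety worth pinning down is the compatibility of the nested inclusions so that the pushed-forward chain really sits inside $f_*\mathcal{O}_{aD}(dD)$, but this is automatic from consistently realising every $\mathcal{O}_{kD}((d-a+k)D)$ as a subquotient of $\mathcal{O}(dD)$ modulo the common $\mathcal{O}((d-a)D)$.
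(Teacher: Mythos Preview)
Your proof is correct and follows precisely the approach the paper indicates: the paper does not spell out a proof here but cites \cite{YZ12a} and points to the ``fundamental exact sequence'' just above the lemma as the source of these filtrations, which is exactly the short exact sequence you iterate. The identification of the HN filtration via strictly decreasing slopes of line-bundle quotients, reduced to $D^2<0$ via the relative canonical bundle formula, is the intended argument.
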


By using those filtrations, we obtained Theorem~\ref{YZ12a} and
Theorem~\ref{YZ12b} below reproduced
from~\cite{YZ12a} and \cite{YZ12b}. They describe the
Harder--Narasimhan polygon of the Hodge bundle over a Teichm\"uller
curve.

\section{Slope filtrations}
\label{s:slope:filtrations}

Slope filtrations are present in algebraic and
analytic geometry, in asymptotic analysis, in ramification theory, in
$p$-adic theories, in geometry of numbers; see the
survey~\cite{An08} of Andr\'e. Five basic examples
include the Harder-Narasimhan filtration of
a holomorhic vector bundle over a smooth projective
curve, the Dieudonne--Manin filtration of $F$-isocrystals over a
$p$-adic point, the Turrittin--Levelt filtration of formal
differential modules, the Hasse--Arf filtration of finite Galois
representations of local fields, and the Grayson--Stuhler filtration
of Euclidean lattices. Despite the variety of their origins, these
filtrations share a lot of similar features.

Suppose that for some object $N$, there is a unique
descending \textit{slope filtration}
$$
0\subset F^{\geq \lambda_1}N\subset ... \subset F^{\geq \lambda_r}N= N
$$
for which $\lambda_1>...>\lambda_r$, such that
there is some natural way to associate the \textit{slope} $\lambda_i$
to every graded piece
$\mathrm{gr}^{\lambda_i}N=F^{\geq \lambda_i}N/F^{>\lambda_i}N$
(one says that the graded piece is \textit{
isoclinic of slope $\lambda_i$}). Denote
$\mathrm{rk}(\mathrm{gr}^{\lambda_i}N)$ by $n_i$, and let $n=\sum n_i$.
We shall call the sequence of pairs $(n_i,n_i\lambda_i), i=1,...,r$,
the \textit{type} of $N$. It is sometimes convenient to describe the type
equivalently by the single $n-$vector $\mu$ whose components are the
slopes $\lambda_i$ each represented $n_i$ times and arranged in
decreasing order. Thus
$$
\mu=(\mu_1,...,\mu_n)=
\big(\underbrace{\lambda_1,\dots, \lambda_1}_{n_1},
\underbrace{\lambda_2,\dots, \lambda_2}_{n_2},
\dots,
\underbrace{\lambda_r,\dots, \lambda_r}_{n_r}\big)
$$
with $\mu_1\geq\mu_2\geq...\geq\mu_n$, where the first $n_1$
entries are equal to $\lambda_1$, the next $n_2$
entries are equal to $\lambda_2$ and so on.

We introduce a partial ordering on the vectors $\mu$ that
parameterize our types.
This partial ordering is defined for vectors $\mu',\mu''$
having the same number $n$ of entries. We say that $\mu'\preceq\mu''$
when $\mu'_1+\dots+\mu'_i\le\mu''_1+\dots+\mu''_i$ for all $i=1,\dots,n$.
We also associate with every type $\mu$ a
convex polygon $P_{\mu}$ in the coordinate plane
$\mathbb{R}^2$ with vertices at the points having coordinates
\begin{equation}
\label{eq:vertices:of:P:mu}
(0,0),\,(1,\mu_1),\,(2,\mu_1+\mu_2),...,(n,\mu_1+...+\mu_n)\,.
\end{equation}
(see Figure~\ref{fig:Convex:polygon:P:mu}). It follows
from our definition of the partial ordering that
$\mu'\preceq\mu''$ if and only if for every pair of
vertices sharing the same first coordinate,
the vertex of $P_{\mu''}$ is located above the
corresponding vertex of $P_{\mu'}$ or coincides with it.

\begin{figure}[hbt]
   %
   %
\includegraphics{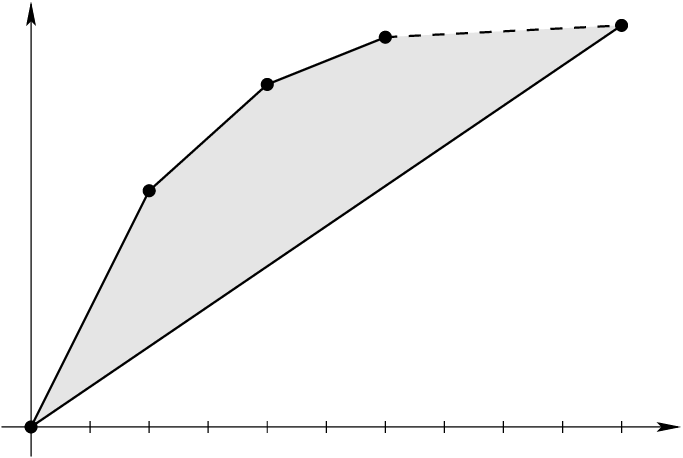}
\begin{picture}(0,0)(-43,-3)
\put(-50,-40){$P_\mu$}
\put(-87.5,-102){\small $1$}
\put(-110,-43){\small $(1,\mu_1)$}
\put(-59,-102){\small $2$}
\put(-103,-18){\small $(2,\mu_1+\mu_2)$}
\put(-29.5,-102){\small $3$}
\put(-35,-5){\small $\dots$}
\put(-5,-102){\small $\cdots$}
\put(27,-102){\small $n$}
\put(29,-4){\small $(n,\mu_1+\dots+\mu_n)$}
\end{picture}
\vspace{100pt}
 \caption{\label{fig:Convex:polygon:P:mu}Convex polygon $P_{\mu}$ .}
\end{figure}

Note that monotonicity of $\mu_i$ is equivalent
to convexity of the polygon $P_{\mu}$ with vertices at the collection of points~\eqref{eq:vertices:of:P:mu}.

\subsection{Eigenvalues of curvature: $\varepsilon$}
Forni introduced in~\cite{Fo02} the eigenvalues of curvature to study
Lyapunov exponents of the Hodge bundle. Here we
follow~\cite{FMZ11} whose setup is closer to the
current paper.

Let
$$f\colon \overline{\mathcal{M}}_{g,1}\rightarrow \overline{\mathcal{M}}_g$$
be the natural forgetful map from the
compactified
moduli space $\mathcal{M}_{g,1}$
of pairs
$(X,p)$,
where $p\in X$,
to the compactified moduli space
$\mathcal{M}_g$ of Riemann surfaces
$X$ of genus $g$.

For the weight one $\mathbb{Q}$-VHS
$$(R^1f_*\mathbb{Q},H^{1,0}=f_*\omega_{\overline{\mathcal{M}}_{g,1}/
\overline{\mathcal{M}}_g}\subset
H=(R^1f_*\mathbb{Q}\otimes_{\mathbb{Q}}\mathcal{O}_{\mathcal{M}_g})_{ext})$$
the flat Gauss--Manin connection $\bigtriangledown$ composed with the
inclusion and projection gives a map
$$A^{1,0}\colon H^{1,0}\rightarrow H \rightarrow  H\otimes \Omega_{\overline{\mathcal{M}}_g}(\mathrm{log} (\overline{\mathcal{M}}_g\backslash \mathcal{M}_g)) \rightarrow (H/H^{1,0})\otimes\Omega_{\overline{\mathcal{M}}_g}(\mathrm{log} (\overline{\mathcal{M}}_g\backslash \mathcal{M}_g))\,,$$
which is $\mathcal{O}_{\overline{\mathcal{M}}_g}$-linear.

The map $A^{1,0}$ is the second fundamental form
of the Hodge bundle,
which is also know as the Kodaira--Spencer map.
Being restricted to a
curve $C$ in $\overline{\mathcal{M}}_g$,
$A^{1,0}\wedge A^{1,0}=0$.
This condition (which is, actually, void for curves)
defines a \textit{Higgs field} which is discussed in Section~\ref{ss:Higgs:fields}.

Denote by $\Theta_H,\Theta_{H^{1,0}},\Theta_{H^{0,1}}$
the curvature tensor of the metric connections
of the holomorphic Hermitian bundles $H,H^{1,0},H^{0,1}$.
By Cartan's structure equation,
$$\Theta_H=  \begin{bmatrix}
 \Theta_{H^{1,0}}- \overline{A^{1,0}}^T\wedge A^{1,0}& *  \\
  *   & \Theta_{H^{0,1}}-  A^{1,0}\wedge  \overline{A^{1,0}}^T
  \end{bmatrix}$$
It follows that
$$\Theta_{H^{1,0}}=\Theta_H|_{H^{1,0}}+ \overline{A^{1,0}}^T\wedge A^{1,0}.$$

Note that $\Theta_H$ is the curvature of the Gauss--Manin connection,
which is flat. So $\Theta_H$ is null, and the curvature
$\Theta_{H^{1,0}}$ of the Hodge bundle
can be expressed as:
$$\Theta_{H^{1,0}}=\overline{A^{1,0}}^T\wedge A^{1,0}\,.$$

We work with the pullbacks of the vector bundles $H,H^{1,0},H^{0,1}$
to the moduli spaces $\mathcal{H}_g$ or $\mathcal{Q}$ of Abelian
(correspondingly quadratic) differentials with respect to the natural
projections $\rho\colon \mathcal{H}_g\rightarrow
\mathcal{M}_g$(correspondingly $\varrho\colon
\mathcal{Q}\rightarrow \mathcal{M}_g$). For any pair
$(X,q)$
we can view the holomorphic quadratic differential $q$
as the tangent vector $v=q$ to the moduli space $\mathcal{M}_g$ at the
point $X$
under the identification between the bundle of holomorhic
quadratic differentials and the tangent bundle of the moduli space of
Riemann surfaces through Beltrami differentials.
We can plug the vector $v$ into the
1-form $A^{1,0}$ with values in linear maps to define a linear map
$$A_{q}:H^{1,0}(X)\rightarrow H^{0,1}(X).$$
for every point $(X,q)$ of the moduli space $\mathcal{Q}$,
see~\cite[p.8]{FMZ11} for details.Analogously,
for any Abelian differetianl $\omega$, let $A_{\omega}:=A_q$ be the complex-linear map corresponding to the quadratic differential $q=\omega^2$.

Following Forni,
for any $\alpha,\beta\in H^{1,0}(X)$, define:
$$B_{\omega}(\alpha,\beta):=\frac{i}{2}\int_X\frac{\alpha\beta}{\omega}\overline{\omega}\,.$$
The complex-valued symmetric bilinear form
$B_{\omega}$ depends continuously
(actually, even real-analytically) on the Abelian
differential $\omega$. The second fundamental form $A_{\omega}$ can
be expressed
in terms of the complex-valued symmetric bilinear form
$B_{\omega}$ in the following way, see~\cite{Fo02},
\cite[Lemma 2.1]{FMZ11}:
$$(A_{\omega}(\alpha),\overline{\beta})=-B_{\omega}(\alpha,\beta).$$
It is related to the derivative
of the period matrix along the Teichm\"uller geodesic flow.

For any Abelian differential $\omega$, let $H_{\omega}$ be the
\textit{negative} of the Hermitian curvature form $\Theta_{\omega}$
on $H^{1,0}(X)$. Let $B$ be the matrix of the bilinear form
$B_{\omega}$ on $H^{1,0}(X)$ with respect to some orthonormal basis
$\Omega:=\{\omega_1,...,\omega_g\}$
of holomorphic Abelian differentials $\omega_1,\dots,\omega_g$
on $X$, that is:
$$B_{jk}:=\frac{i}{2}\int_{X}\frac{\omega_j\omega_k}{\omega}\overline{\omega}.$$
The Hermitian form $H_{\omega}$ is positive-semidefinite and its matrix $H$ with respect to any
Hodge-orthonormal basis $\Omega$ can be written as follows \cite{Fo02}\cite{FMZ11}:
$$H=B\cdot \overline{B}^T.$$

Let $EV(H_{\omega})$ and $EV(B_{\omega})$ denote the set of eigenvalues of the forms $H_{\omega}$ and $B_{\omega}$ respectively. The following identity holds:
$$EV(H_{\omega})=\{|\lambda|^2\text{ where }\lambda\in EV(B_{\omega})\}.$$
For every Ableian differential $\omega$, the eigenvalues of the
positive semidefinite from $H_{\omega}$ on $H^{1,0}(X)$ will be
denoted as follows:
$$1=\Lambda_1(\omega)>\Lambda_2(\omega)\geq...\geq\Lambda_g(\omega)\geq 0\,,$$
where the identity $\Lambda_1(\omega)=1$ is proved
in~\cite{Fo02}, \cite[p.16]{FMZ11}. Every eigenvalue
as above gives a well-defined continuous, non-negative, bounded
function on the moduli space of all (normalized) abelian
differentials.

For a Teichm\"uller curve $C$, there is a
natural volume form $\mathrm{d}\sigma$ which satisfies
$$\frac{i}{2\pi}\Theta_{H^{1,0}}=H_{\omega}\mathrm{d}\sigma\,.$$
This volume form
coincides with the normalized hyperbolic area form
\begin{equation}
\label{eq:d:sigma}
\mathrm{d}\sigma=\frac{1}{\pi}\,\mathrm{d}g_{hyp}(\omega)
\end{equation}
associated to the canonical hyperbolic metric
of constant negative curvature $-4$
on the Teichm\"uller curve $C$ used in~\cite[p.32]{EKZ11}.
Thus, we have
$$\int_C\Lambda_1(\omega)\mathrm{d}\sigma=\int_C\mathrm{d}\sigma=\frac{\chi}{2}\,,$$
where $-\chi$ is the Euler characteristic of
the Teichm\"uller curve $C$ with punctures at the cusps,
$\chi=2g-2+|\Delta|$, and $|\Delta|$ is the number of cusps of $C$.

Following Forni \cite{Fo02}, we define the integrals
\begin{align}\label{ki}
 \varepsilon_j=\frac{1}{\chi/2}\int_C\Lambda_j(\omega)\,\mathrm{d}\sigma\,.
\end{align}
By definition, the numbers $\epsilon_1, \dots,\epsilon_g$ satisfy inequalities:
$1=\varepsilon_1\geq ....\geq\varepsilon_g\geq 0$.
We define the \textit{eigenvalue type} $\varepsilon(C)$
of a Teichm\"uller curve $C$ as
$$\varepsilon(C)=(\varepsilon_1,...,\varepsilon_g)\,.$$

\subsection{Lyapunov exponents: $\lambda$}
Zorich introduces the Lyapunov exponents of the Hodge
bundle to study the Teichm\"uller geodesic flow~\cite{Zo94}. The
geometric meaning of these Lyapunov
exponents is clearly explained in \cite[section 4]{Zo06}.

A motivating example called Ehrenfest wind-tree model for Lorenz
gases appears in the work of Delecroix, Hubert and Leli\'evre
\cite{DHL11}. Consider a billiard on the plane with
$\mathbb{Z}^2$-periodic rectangular obstacles as in Figure~\ref{fig:windtree}.

\begin{figure}[htb]
\label{wt}
\centering
\setlength{\unitlength}{0.5mm}
Ehrenfest wind-tree model for Lorenz gases \cite{DHL11}.\\
\begin{picture}(100,100)
\newsavebox{\xbox}
\savebox{\xbox}
  (20,16)[bl]{\put(0,0){\line(1,0){20}
 \put(0,0){\line(0,1){16}}
 \put(20,0){\line(0,1){16}}
 \put(0,16){\line(1,0){20}}
}}

\multiput(10,12)(30,0){3}{\usebox{\xbox}}
\multiput(10,42)(30,0){3}{\usebox{\xbox}}
\multiput(10,72)(30,0){3}{\usebox{\xbox}}

\put(53,28){\line(-1,1){23}}
\put(30,51){\line(1,1){21}}
\put(51,72){\line(1,-1){19}}
\put(70,53){\line(-1,-1){10}}
\put(60,43){\line(1,-1){15}}
\put(75,28){\line(1,1){14}}
\put(89,42){\vector(1,-1){11}}

\end{picture}
\caption{
\label{fig:windtree}
Billiard in the plane with periodic rectangular obstacles.}
\end{figure}

It is shown in~\cite{DHL11} that for
all parameters $(a,b)$ of the obstacle (i.e.,
for all pairs of lenghts $a, b\in(0, 1)$ of the
sides of the rectangular obstacles), for almost all initial direction
$\theta$, and for any starting point $x$ the diameter
of the billiard trajectory grows with the rate $t^{2/3}$:
$$\lambda_2=
\limsup_{t\to\infty}\frac{\log\left(\textit{distance between }x\textit{ and }\phi^{\theta}_{t}(x)\right)}{\log t}=
\frac{2}{3}\,.$$

The number ``$\frac{2}{3}$'' here is the Lyapunov exponent of a certain renormalizing
dynamical system associated to the initial one.

We recall now the definition of Lyapunov exponents
of the Hodge bundle. Fix an $\operatorname{SL}_2(\mathbb{R})$-invariant, ergodic measure $\mu$ on
$\mathcal{H}_g$. Let $V$ be the restriction of the real Hodge
bundle (i.e. the bundle with fibers $H^1(X,\mathbb{R})$) to the
support $\mathcal{M}\subset\mathcal{H}_g$ of $\mu$. Let $S_t$ be the lift of the geodesic flow to
$V$ via the Gauss--Manin connection. Then\textit{ Oseledec's multiplicative
ergodic Theorem} guarantees the existence of a filtration
$$0\subset V_{\lambda_g}\subset ...\subset V_{\lambda_1}=V$$
by measurable vector subbundles with the property that, for almost
all $m\in \mathcal{M}$ and all $v\in V_m\backslash\{0\}$ one has
$$||S_t(v)||=\mathrm{exp}(\lambda_it+o(t)),$$
where $i$ is the maximal index such that $v$ is in the fiber of
$V_i$ over $m$ (i.e. $v\in(V_i)_m$). The numbers $\lambda_i$ for
$i=1,...,k\leq \mathrm{rank}(V)$ are called the \textit{Lyapunov exponents}
of the \textit{ Kontsevoch-Zorich cocycle} $S_t$. Since $V$ is symplectic, the spectrum
of Lyapunov exponents is symmetric in the
sense that $\lambda_{g+k}=-\lambda_{g-k+1}$. Moreover, from
elementary geometric arguments it follows that one always has
$\lambda_1=1$. Thus, the Lyapunov spectrum is
completely determined by the non-negative Lyapunov exponents
$$1=\lambda_1\geq\lambda_2\geq...\geq\lambda_g\geq 0.$$

We will apply Oseledec¡¯s theorem in two instances. The first
one corresponds to the
Masur--Veech measures $\mu_{gen}$.
The support of such measure coincdes with the
entire
hypersurface of flat surfaces of area one in a connected component
of a stratum of Abelian or quadratic differentials.
The second case corresponds to Teichm\"uller curves.
When talking about Lyapunov exponents for Teichm\"uller curves $C$ we
take  $\mu$ to be the measure on the unit tangent bundle $T^1C$ to a
Teichm\"uller curve that stems from the Poincar\'{e} metric $g_{hyp}$
on $\mathbb{H}$ with scalar curvature $-4$. In both cases, the
integrability condition of Oseledets theorem is known to
be satisfied, see e.g.~\cite[p.38]{Mo12}.

We define the \textit{Lyapunov type} $\lambda(C)$
of a Teichm\"uller curve $C$ as
$$\lambda(C)=(\lambda_1,...,\lambda_g)\,.$$

A bridge between the ``dynamical'' definition of Lyapunov exponents
and the  ``algebraic'' method applied in the sequel
originates from
the
following result. It is first formulated by Kontsevich~\cite{Ko97}(in a slightly different
form) and then extended by Forni~\cite{Fo02}.

\begin{theorem}[\cite{Ko97}, \cite{Fo02}, \cite{BM10}]
\label{sumly}
If the VHS over the Teichm\"uller curve $C$ contains a sub-VHS $\mathbb{W}$
of rank $2k$, then the sum of the $k$ corresponding non-negative
Lyapunov exponents equals
$$\overset{k}{\underset{i=1}{\sum}}\lambda^{\mathbb{W}}_i=\frac{2\mathrm{deg} \mathbb{W}^{(1,0)}}{2g(C)-2+|\Delta|},$$
where $\mathbb{W}^{(1,0)}$ is the $(1,0)$-part of the Hodge
filtration of the vector bundle associated with $\mathbb{W}$
and $|\Delta|$ is the number of cusps of $C$. In
particular, we have
$$\overset{g}{\underset{i=1}{\sum}}\lambda_i=\overset{g}{\underset{i=1}{\sum}}\varepsilon_i=\frac{2\mathrm{deg}f_*\omega_{S/C}}{2g(C)-2+|\Delta|}.$$
\end{theorem}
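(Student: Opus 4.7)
The plan is to reduce the sum $\sum_{i=1}^{k}\lambda_i^{\mathbb{W}}$ to the top Lyapunov exponent of the Kontsevich--Zorich cocycle on the determinant line bundle, and then identify that exponent with a Chern--Weil integral of the curvature of $\mathbb{W}^{(1,0)}$ against the hyperbolic volume form $\mathrm{d}\sigma$ on $C$. Concretely, by Oseledec applied to $\wedge^k S_t$ on $\wedge^k\mathbb{W}$, the sum of the top $k$ Lyapunov exponents equals the single Lyapunov exponent of the cocycle induced on the determinant. Since $\wedge^{k}\mathbb{W}^{(1,0)}\subset \wedge^{k}\mathbb{W}$ is a holomorphic line \emph{sub-bundle} (preserved by the Hodge filtration but \emph{not} by Gauss--Manin), a generic holomorphic section $s$ of it, parallel-transported by $S_t$, will satisfy $\tfrac{1}{t}\log\|S_t s\|_{\text{Hodge}} \longrightarrow \sum_{i=1}^{k}\lambda_i^{\mathbb{W}}$ for almost every $\omega$.

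Next, I would compute this growth rate by differentiating along the geodesic flow. Because the Gauss--Manin connection is flat ($\Theta_H=0$), the curvature of the Hodge sub-bundle $\mathbb{W}^{(1,0)}$ comes entirely from the second fundamental form: $\Theta_{\mathbb{W}^{(1,0)}}=\overline{A^{1,0}}^{T}\wedge A^{1,0}|_{\mathbb{W}^{(1,0)}}$, exactly as derived earlier in the excerpt. Taking traces gives the curvature of the determinant line bundle, $\Theta_{\det\mathbb{W}^{(1,0)}}=\mathrm{tr}\bigl(\overline{A^{1,0}}^{T}\wedge A^{1,0}|_{\mathbb{W}^{(1,0)}}\bigr)$. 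Applying Birkhoff's ergodic theorem to the $SL_2(\mathbb{R})$-invariant probability measure on $C$ converts the asymptotic time average $\tfrac{1}{t}\log\|S_t s\|$ into a space integral over $C$ of the trace of the Hermitian form $H_\omega$ restricted to $\mathbb{W}^{(1,0)}$.

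Now I would invoke Chern--Weil: the integrand $\tfrac{i}{2\pi}\Theta_{\det\mathbb{W}^{(1,0)}}$ represents $c_1(\det\mathbb{W}^{(1,0)})$, so its integral over $C$ equals $\deg\mathbb{W}^{(1,0)}$. Combined with the Gauss--Bonnet normalization for the Poincaré metric of curvature $-4$ (under which $\mathrm{vol}(C)=\tfrac{\pi}{2}(2g(C)-2+|\Delta|)=\tfrac{\pi\chi}{2}$, as recorded in the definition of $\mathrm{d}\sigma$ in the excerpt), the space integral evaluates to the desired ratio $\tfrac{2\deg\mathbb{W}^{(1,0)}}{2g(C)-2+|\Delta|}$. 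The special case $\mathbb{W}=R^{1}f_{*}\mathbb{Q}$ gives $\sum_{i=1}^{g}\lambda_i=\tfrac{2\deg f_{*}\omega_{S/C}}{2g(C)-2+|\Delta|}$, and the equality $\sum\lambda_i=\sum\varepsilon_i$ is then immediate from the very definition $\varepsilon_j=\int_C \Lambda_j(\omega)\,\mathrm{d}\sigma/\tfrac{\chi}{2}$ in \eqref{ki}, since $\sum \Lambda_j(\omega)=\mathrm{tr}(H_\omega)$ is precisely the integrand we have just integrated.

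The main technical obstacle is the behaviour at the cusps $\Delta\subset C$: both the integrability hypothesis of Oseledec's theorem and the identification of $\int_C c_1$ with the algebraic degree require careful control at the boundary. The Hodge norm degenerates near cusps, so $\log\|S_t s\|$ must be bounded above by an $L^{1}$ function along almost every orbit; for this one uses that the monodromy around each cusp of a Teichmüller curve is unipotent (semistability), so that $\mathbb{W}^{(1,0)}$ admits Deligne's canonical extension to $\overline{C}$ and the singular fibres contribute no residue to the Chern--Weil integral. Verifying this integrability and the equality between the improper curvature integral and the degree of the extended bundle is the delicate point; once it is granted, the identification $\sum\lambda_i^{\mathbb{W}}=2\deg\mathbb{W}^{(1,0)}/\chi$ follows from the scheme above.
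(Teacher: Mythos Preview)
The paper does not supply its own proof of this theorem; it is quoted as a result of Kontsevich, Forni and Bouw--M\"oller, and the only methodological hint in the text is the sentence preceding Corollary~\ref{fo2}: ``Then Theorem~\ref{sumly} can be deduced from the following,'' referring to Forni's second variational formula $\Delta\log\|c_1\wedge\cdots\wedge c_k\|_\omega = 2\Phi_k(\omega,I_k)$.  Your proposal is essentially this same argument, phrased in Chern--Weil language: the curvature of $\det\mathbb{W}^{(1,0)}$ is the trace of $\overline{A^{1,0}}^{T}\wedge A^{1,0}$, which is exactly the integrand in Forni's formula, and integrating it over $C$ against $\mathrm{d}\sigma$ yields $\deg\mathbb{W}^{(1,0)}$ on one side and the sum of exponents on the other.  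So approach and content match.

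One point in your sketch is looser than it needs to be.  You argue that a ``generic holomorphic section'' of $\wedge^{k}\mathbb{W}^{(1,0)}$, parallel-transported, grows at the top rate $\sum_i\lambda_i^{\mathbb{W}}$ inside $\wedge^{k}\mathbb{W}$.  Since $\wedge^{k}\mathbb{W}$ has rank $\binom{2k}{k}>1$, genericity in the Oseledec sense is not automatic for this particular line, and in fact is not how Forni's argument runs.  The cleaner route (implicit in Corollary~\ref{fo2} and in the formula for $\Phi_k$) is that when $I_k=\mathbb{W}^{(1,0)}$ is the full holomorphic Lagrangian of the sub-VHS, the Laplacian of $\log\|\,\cdot\,\|$ is computed \emph{exactly} as $\mathrm{tr}(H_\omega|_{\mathbb{W}^{(1,0)}})$, so averaging over the Teichm\"uller disk and then over $C$ identifies the growth rate with the curvature integral directly, without any appeal to genericity.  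Your treatment of the cusps (unipotent monodromy, Deligne extension, no residue contribution) is the correct way to close the gap between the analytic integral and the algebraic degree, and is indeed the delicate step in the cited references.
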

The formula immediately
implies the Arakelov inequality for
Teichm\"uller curves:
$$\mathrm{deg}f_*\omega_{S/C}=
\left(\frac{1}{2}\ \overset{g}{\underset{i=1}{\sum}}\lambda_i\right)
\Big(2g(C)-2+|\Delta|\Big)\leq
\frac{g}{2}\Big(2g(C)-2+|\Delta|\Big)\,.$$

Eskin, Kontsevich and Zorich have elaborated
an appropriate analytic Riemann-Roch formula to compute the sum of Lyapunov exponents
of the Hodge bundle along the
Teichm\"{u}ller geodesic flow on any $\operatorname{SL}(2,\mathbb{R})$-invariant suborbifold.

\begin{theorem}[{\cite[Theorem 1]{EKZ11}}]
Let $\mathcal{M}_1$ be any closed connected
$\operatorname{SL}(2,\mathbb{R})$-invariant suborbifold of some stratum
$\mathcal{M}_g(m_1,...m_n)$ of Abelian differentials, where
$m_1+...+m_n=2g-2$. The top $g$ Lyapunov exponents of the Hodge bundle
over $\mathcal{M}_1$ along the Teichm\"uller flow satisfy the
following relation:
$$\overset{g}{\underset{i=1}{\sum}}\lambda_i=\frac{1}{12}\ \overset{k}{\underset{i=1}{\sum}}\frac{m_i(m_i+2)}{m_i+1}+\frac{\pi^2}{3}\,c_{area}(\mathcal{M}_1),$$
where $c_{area}(\mathcal{M}_1)$ is the area Siegel-Veech constant
corresponding to the suborbifold $\mathcal{M}_1$. The leading Lyapunov exponent $\lambda_1$ is equal to one.
\end{theorem}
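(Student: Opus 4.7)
The plan is to lift Forni's formula (Theorem \ref{sumly}), originally stated for a Teichm\"uller curve, to an identity for a general $SL_2(\mathbb{R})$-invariant ergodic measure $\mu$ on $\mathcal{M}_1$, giving
\[
2\sum_{i=1}^g \lambda_i \;=\; \int_{\mathcal{M}_1} \mathrm{tr}(H_\omega)\,d\mu,
\]
after normalizing $\mu$ to have unit mass. From Section 3.1 we have $H=B\overline{B}^{T}$, hence $\mathrm{tr}(H_\omega)=\sum_{j,k}|B_{jk}|^{2}$. The equality $\lambda_{1}=1$ follows immediately from the $SL_2(\mathbb{R})$-equivariance of the tautological line subbundle $\mathcal{L}\subset H^{1,0}$ generated by $\omega$. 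The real content of the theorem is the explicit evaluation of the above integral, and the two summands on the right-hand side should appear respectively as a local contribution at the zeros of $\omega$ and as a boundary contribution from short horizontal cylinders.

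The first main step is to identify $\mathrm{tr}(H_\omega)$, up to a flow derivative, with the logarithm of the determinant of the flat Laplacian $\Delta_{|\omega|^{2}}$; this is essentially Forni's interpretation of the Hodge curvature in terms of the infinitesimal variation of period matrices along the Teichm\"uller flow. Using a Polyakov/Fay-type comparison formula one then rewrites $\det\Delta_{|\omega|^{2}}$ in terms of the determinant of the Laplacian for a smooth metric in the same conformal class (for example the Arakelov metric). Because the flat metric $|\omega|^{2}$ has a conical singularity of cone angle $2\pi(m_i+1)$ at each zero of $\omega$, this comparison produces a universal local residue $\tfrac{m_i(m_i+2)}{12(m_i+1)}$ at the $i$-th zero. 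Summing over $i=1,\dots,k$ reproduces the combinatorial term in the statement.

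The second main step is to integrate the resulting identity against $\mu$ and perform integration by parts along the $SL_2(\mathbb{R})$-action. By invariance the bulk divergence-like term integrates to zero, and only a boundary contribution survives from the $\varepsilon$-thin part $\mathcal{M}_{1}^{<\varepsilon}$, namely the locus where some horizontal cylinder core curve is shorter than $\varepsilon$. Applying the Eskin-Masur-Zorich counting and equidistribution results identifies the limiting flux through $\partial\mathcal{M}_{1}^{<\varepsilon}$ as $\varepsilon\to 0$ with a constant multiple of the area Siegel-Veech constant $c_{\mathrm{area}}(\mathcal{M}_{1})$; the explicit numerical factor $\pi^{2}/3$ then comes from an elementary integral $\int \ell^{-2}\,d\ell$ over cylinder length parameters, weighted against the Haar measure of the unipotent subgroup of $SL_2(\mathbb{R})$.

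The hard part will be the careful control of $\mathrm{tr}(H_\omega)$ near the boundary of $\mathcal{M}_{1}$: one must show that the only non-negligible contributions come from configurations with short horizontal cylinders, and not from arbitrary short saddle connections. This uses in an essential way the regularity hypothesis on $\mathcal{M}_{1}$ and a delicate analysis of the Siegel-Veech transform applied to the characteristic function of cylinder configurations; combined with the local residues from the first step, it then yields the stated identity.
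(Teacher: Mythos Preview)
The paper does not prove this theorem. It is quoted verbatim from \cite[Theorem~1]{EKZ11} as background, with no argument supplied here; the surrounding text only says ``Eskin, Kontsevich and Zorich have got an analytic Riemann--Roch formula\ldots'' and then states the result. So there is no ``paper's own proof'' to compare your proposal against.

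That said, your sketch is a reasonable outline of the actual Eskin--Kontsevich--Zorich argument: starting from the Kontsevich--Forni integral formula for $\sum\lambda_i$, rewriting the integrand via a determinant-of-Laplacian (analytic Riemann--Roch / Polyakov-type) identity, extracting the local cone-point contributions $\tfrac{m_i(m_i+2)}{12(m_i+1)}$, and identifying the remaining boundary/thin-part term with the area Siegel--Veech constant. One point to be careful about in your write-up: the factor-of-two bookkeeping in your first display (whether the left side is $\sum\lambda_i$ or $2\sum\lambda_i$) depends on the precise normalization of $H_\omega$ and of the hyperbolic volume form, and EKZ's conventions should be followed consistently. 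Also, the regularity hypothesis is used not only for the boundary analysis but already to ensure integrability of the relevant quantities; you should flag where exactly it enters. But none of this can be checked against the present paper, since the result is only cited here.
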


\subsection{Harder-Narasimhan filtrations: $w$}
\label{ss:HNfiltrations}

We refer the readers to~\cite{HN75}, and to~\cite{HL97}
for details about the Harder-Narasimhan filtration.

Consider a smooth curve $C$ and a holomorphic vector
bundle $V$ over $C$. In order to recall the
definition of stability it would be consentient to
introduce  the normalized Chern class or \textit{slope}
$\mu(V)=\mathrm{deg}(V)/\mathrm{rk}(V)$ of the vector
bundle $V$. A holomorphic bundle $V$ is called
\textit{stable} if for every proper holomorphic subbundle $W$ of $V$,
we have $\mu(W)<\mu(V)$. A \textit{semi-stable} bundle is defined
similarly but we allow now the weak inequality $\mu(W)\leq\mu(V)$.

Harder and Narasimhan show that every holomorphic bundle $V$ has a canonical filtration
$$0=HN_0(V)\subset HN_1(V)\subset ...\subset HN_r(V)=V$$
satisfying the following two properties. Every
graded quotient
$$
\mathrm{gr}^{HN}_i=HN_i(V)/HN_{i-1}(V)\,,\ \text{ for }i=1,\dots,r,
$$
is semi-stable and
$$\mu(\mathrm{gr}^{HN}_1)>\mu(\mathrm{gr}^{HN}_2)>...>\mu(\mathrm{gr}^{HN}_r).$$
If $\mathrm{gr}^{HN}_i$ has rank $n_i$ and Chern
number $k_i$, so that $n=\sum n_i,k=\sum k_i$, we
shall call the sequence of pairs $(n_i,k_i),\ i=1,...,r$ the
\textit{slope type} of
the holomorphic bundle
$V$. As before, it is convenient to describe the type equivalently by
a single $n$-vector $\mu(V)$ whose components are the ratios
$k_i/n_i$ each represented $n_i$ times and arranged in decreasing
order
$$\mu(V)=(\mu_1,...,\mu_n)
=\Big(
\underbrace{k_1/n_1,\dots,k_1/n_1}_{n_1},\,\dots\,,
\underbrace{k_r/n_r,\dots,k_r/n_r}_{n_r}
\Big)\,.
$$

For a Teichm\"uller curve $C$, it is convenient to set
$w_i=\mu_i(f_*\omega_{S/C})/(\chi/2)$ and
to define the \textit{Harder--Narasimhan type} of
a Teichm\"uller curve $C$ as
the $g$-vector
\begin{align}\label{wi}
w(C)=(w_1,...,w_g).
\end{align}

It follows from geometric arguments in~\cite{Mo06},
see also~\cite{Gj12} and~\cite{Wr12b}, that for any \mbox{Teichm\"uller} curve $C$ in any
stratum of Abelian differentials the identity $w_1(C)=1$ is valid.

The Harder--Narasimhan type of a Teichm\"uller curve is given by the
following two theorems:

\begin{theorem}[\cite{YZ12a}]\label{YZ12a}Let $C$ be a Teichm\"{u}ller curve in the hyperelliptic locus of some
stratum $\overline{\mathcal{H}}_g(m_1,...,m_k)$, and denote by
$(d_1,...,d_n)$ the orders of singularities of underlying quadratic
differentials. Then $w_i(C)$ is the $i$-th largest number in
the following set
$$\{1\}\cup\Big\{1-\frac{2k}{d_j+2}\Big\}_{\forall d_j, 0<2k\leq d_j+1}$$
In particular, the Harder--Narasimhan type $w(C)$ of a Teichm\"uller curve
in any hyperelliptic locus of any stratum is constant and depends only on the locus.
\end{theorem}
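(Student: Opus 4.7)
The plan is to exhibit the Harder--Narasimhan filtration of $f_*\omega_{S/C}$ explicitly, by refining the filtrations of Section 2 using the extra symmetry coming from the hyperelliptic involution, and then to read off the slopes via the canonical bundle formula \eqref{canonical} and Lemma \ref{HN}.

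\textbf{Step 1 (Hyperelliptic setup).} I would first pass through the underlying quadratic differential $q$ on $Y=\mathbb{P}^1$, producing the canonical double cover $\pi\colon X\to Y$ with $\pi^*q=\omega^2$ and hyperelliptic involution $\sigma$. Since $\sigma^*\omega=-\omega$, the bundle $f_*\omega_{S/C}$ splits under $\sigma$ into $\pm$-parts; the anti-invariant summand has rank $g$ and contains the maximal Higgs line $\mathcal{L}$. Writing anti-invariant sections in the form $\pi^*g\cdot\omega$ for meromorphic $g$ on $Y$ controls precisely their vanishing patterns at the zeros $D_j$ of $\omega$: for an odd singularity $d_j$ there is a single ramified zero $D_j$ of even multiplicity $m_j=d_j+1$, and anti-invariance forces the vanishing order at $D_j$ to be even; for an even singularity $d_j$ there are two zeros $D_j',D_j''$ of common multiplicity $m_j=d_j/2$ swapped by $\sigma$, and anti-invariance forces their vanishing orders to coincide.

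\textbf{Step 2 (Descending chain).} Using these admissible orders, I would assemble a descending chain of sub-bundles of the anti-invariant summand. For each singularity of order $d_j\geq 1$, the admissible jumps are indexed by $k=1,\dots,\lfloor(d_j+1)/2\rfloor$, corresponding to the common vanishing order $v_j=m_j-2k$ (odd $d_j$) or the paired vanishing order $v_j=m_j-k$ at both $D_j',D_j''$ (even $d_j$). Combining all singularities gives a chain of length $1+\sum_{d_j\geq 1}\lfloor(d_j+1)/2\rfloor=g$ whose successive quotients are line bundles on $C$, with top line equal to $\mathcal{L}$.

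\textbf{Step 3 (Slope computation).} Each graded quotient is identified, via the fundamental exact sequence and Lemma \ref{HN}, with a subquotient of $\omega_{S/C}|_{D_j}\otimes\mathcal{O}_{D_j}(-v_jD_j)$ (respectively with the $\sigma$-diagonal piece supported on $D_j'+D_j''$). Since $\omega_{S/C}|_{D_j}=\mathcal{L}\otimes N_{D_j/S}^{m_j}$ and $D_j^2=-\frac{1}{m_j+1}\frac{\chi}{2}$ by \eqref{canonical}, the degree of the graded line bundle at the jump $v_j$ comes out to $\frac{v_j+1}{m_j+1}\cdot\frac{\chi}{2}$. Substituting $v_j=m_j-2k$ and $m_j+1=d_j+2$ in the odd case, and $v_j=m_j-k$ with $2(m_j+1)=d_j+2$ in the even case (after identifying the anti-invariant diagonal), yields normalized slope exactly $1-\tfrac{2k}{d_j+2}$ in both cases; the initial piece $\mathcal{L}$ has normalized slope $1$.

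\textbf{Step 4 (Identification).} Grouping the line-bundle summands by equal slope produces a filtration whose graded quotients are polystable (direct sums of line bundles of equal degree) with strictly decreasing slopes; by uniqueness this must be the Harder--Narasimhan filtration of $f_*\omega_{S/C}$. Sorting the resulting multiset of slopes in decreasing order gives the description of $w_i$ as the $i$-th largest element of $\{1\}\cup\{1-\tfrac{2k}{d_j+2}:\,0<2k\leq d_j+1\}$. The main obstacle is the $\sigma$-equivariant bookkeeping in Steps 2--3: one must identify which of the two line-bundle summands of Lemma \ref{HN} (at a double-step for odd $d_j$, or across the swapped pair for even $d_j$) actually lives in the anti-invariant part, and verify that the resulting slope is uniformly $1-\tfrac{2k}{d_j+2}$. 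Once this dictionary is set up, the identification with the Harder--Narasimhan filtration and the reduction of the non-varying low-genus cases (Tables 2 and 3) are formal.
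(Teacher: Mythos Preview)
Your proposal is correct and follows the approach the paper indicates: this theorem is not proved in the present paper but cited from \cite{YZ12a}, and the tools you invoke---the relative canonical bundle formula~\eqref{canonical}, the filtrations $\mathcal{L}\otimes f_*\mathcal{O}(\sum d_iD_i)$ of Section~2, and Lemma~\ref{HN} for the graded pieces---are precisely the machinery the paper assembles before stating the result. Your $\sigma$-equivariant bookkeeping (even vanishing order at ramified zeros, paired vanishing at swapped zeros) and the slope computation $\frac{v_j+1}{m_j+1}=1-\frac{2k}{d_j+2}$ are the heart of the argument in \cite{YZ12a}; one small correction is that in Step~4 the grouped quotients are \emph{semistable} (as iterated extensions of equal-degree line bundles), not necessarily polystable, but semistability is all that uniqueness of the Harder--Narasimhan filtration requires.
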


The nonvarying property of the
the Harder--Narasimhan type of all Teichm\"uller curves is also
valid for certain strata in low genera $g=3,4,5$, see tables
with explicit values of all $w_i(C)$ in the Appendix.
This observation
provides an alternative proof~\cite{YZ12a} of the Kontsevich--Zorich conjecture
on non-varying of the sums of the Lyapunov exponents of the Hodge bundle for
the corresponding strata; see~\cite{CM11} for the original proof.

Since the
action of $\operatorname{GL}^+(2,\mathbb{R})$
preserves the strata of meromorphic quadratic differentials
with at most simple poles, it also preserves their images under the map~\eqref{eq:Q:to:H}.
In particular, all hyperelliptic loci in the strata of Abelian differentials are
invariant under the action of $\operatorname{GL}^+(2,\mathbb{R})$. Thus, if some
Teichm\"uller curve $C$ intersects some hyperelliptic locus $\mathcal{M}^{hyp}$, it is
entirely contained in it, $C\subset\mathcal{M}^{hyp}$.

\begin{theorem}[\cite{YZ12b}]\label{YZ12b}
\label{th:inequalities:on:w}
For any stratum $\mathcal{H}_g(m_1,...,m_k)$
of Abelian differentials, order the entries of the set with multiplicites
$$\left\{\cfrac{j}{m_l+1}\right\}_{\substack{1\leq j\leq m_l\\1\leq l\leq k}}$$
getting an increasing sequence of $2g-2=$
numbers $a_1\leq a_2\leq \cdot\cdot\cdot \leq
a_{2g-2}$, where $2g-2=m_1+\dots+m_k$.

For any Teichmu\"ller curve $C$ in the stratum
$\mathcal{H}_g(m_1,...,m_k)$, there exists a
permutation $\myperm_C$ of the set $\{1,\dots,2g-2\}$
satisfying the following properties. For $i = 2, . . . , g$, $\myperm_C(i)\ge 2i-2$,
where all inequalities for $i = 2, . . . , g-1$ are strict if $C$ is not contained
in some hyperelliptic locus. The normalized Harder--Narasimhan
slopes $w_i(C)$ of the Hodge bundle over $C$ satisfy the following system of
inequalities:
\begin{equation}
\label{eq:bounds:for:w}
w_i\leq 1-a_{\myperm_C(i)}\ \text{ for }i=2,\dots, g\,,
\end{equation}
\end{theorem}

\begin{example}Consided the stratum $\mathcal{H}_5(6,1,1)$.
Ordering the entries of the set with multiplicities
$$\left\{\frac{1}{7},\frac{2}{7},\frac{3}{7},\frac{4}{7},\frac{5}{7},\frac{6}{7},\frac{1}{2},\frac{1}{2}\right\}$$
in increasing order we get an order set with multiplicities
$$
\{a_1, a_2, \dots, a_8\}=
\left\{\frac{1}{7},\frac{2}{7},\frac{3}{7},\frac{1}{2},\frac{1}{2},\frac{4}{7},\frac{5}{7},\frac{6}{7}\right\}\,.$$
For any a Teichm\"{u}ller
curve in the stratum $\mathcal{H}_5(6,1,1)$, the permutation $\myperm_C$ of the set $\{1,\dots, 8\}$ satisfies
$\myperm_C(i)\ge 2i-2$ for $i=1,\dots,4$, so we have
$$\myperm_C(2)\geq 2,\quad \myperm_C(3)\geq 4,\quad \myperm_C(4)\geq 6,\quad \myperm_C(5)=8\,.$$
Theorem~\ref{th:inequalities:on:w} asserts that
 the normalized Harder--Narasimhan slopes $w_i(C)$
satisfy the following inequalities:
\begin{align*}
w_2(C)&\leq 1-a_{\myperm_C(2)}\leq 1-a_2=\frac{5}{7}\\
w_3(C)&\leq 1-a_{\myperm_C(3)}\leq 1-a_4=\frac{1}{2}\\
w_4(C)&\leq 1-a_{\myperm_C(4)}\leq 1-a_6=\frac{3}{7}\\
w_5(C)&\leq 1-a_{\myperm_C(5)}\leq 1-a_8=\frac{1}{7}\,.
\end{align*}

If $C$ is not located in some hyperelliptic locus, then
$$\myperm_C(2)\geq 3,\quad \myperm_C(3)\geq 5,\quad \myperm_C(4)\geq 7,\quad \myperm_C(5)=8\,.$$
Theorem~\ref{th:inequalities:on:w} asserts that
 the normalized Harder--Narasimhan slopes $w_i(C)$
satisfy the following inequalities:
\begin{align*}
w_2(C)&\leq 1-a_{\myperm_C(2)}\leq 1-a_3=\frac{4}{7}\\
w_3(C)&\leq 1-a_{\myperm_C(3)}\leq 1-a_5=\frac{1}{2}\\
w_4(C)&\leq 1-a_{\myperm_C(4)}\leq 1-a_7=\frac{2}{7}\\
w_5(C)&\leq 1-a_{\myperm_C(5)}\leq 1-a_8=\frac{1}{7}\,.
\end{align*}

\end{example}

A simple corollary of this theorem is
\begin{corollary}[\cite{YZ12b}]For a Teichm\"{u}ller
curve which lies in $\mathcal{H}_g(m_1,...m_k)$, we have
inequalities:
$$\overset{g}{\underset{i=1}{\sum}}\lambda_i=\overset{g}{\underset{i=1}{\sum}}\varepsilon_i=\overset{g}{\underset{i=1}{\sum}}w_i\leq \frac{g+1}{2}.$$
\end{corollary}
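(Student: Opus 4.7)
The plan is to treat the two equalities and the inequality separately, since the equalities are essentially tautological once Theorem \ref{sumly} is in hand, while the inequality is an elementary consequence of Theorem \ref{YZ12b}.

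First, I would derive the two equalities $\sum \lambda_i = \sum \varepsilon_i = \sum w_i$ from Theorem \ref{sumly} applied to the whole VHS $\mathbb{W} = R^1 f_* \mathbb{Q}$, whose $(1,0)$-part is $f_*\omega_{S/C}$. Theorem \ref{sumly} immediately gives
$$\sum_{i=1}^{g}\lambda_i = \sum_{i=1}^{g}\varepsilon_i = \frac{2\deg f_*\omega_{S/C}}{2g(C)-2+|\Delta|} = \frac{\deg f_*\omega_{S/C}}{\chi/2}.$$
On the other hand, by the normalization (\ref{wi}) of the Harder--Narasimhan slopes,
$$\sum_{i=1}^{g} w_i = \sum_{i=1}^{g}\frac{\mu_i(f_*\omega_{S/C})}{\chi/2} = \frac{\deg f_*\omega_{S/C}}{\chi/2},$$
which is the same quantity. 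So the two equalities follow at once.

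For the inequality $\sum w_i \leq (g+1)/2$ I would apply Theorem \ref{YZ12b}. That theorem states $w_i \leq 1 - a_{H_i(P)}$ with $H_i(P) \geq 2i-2$, and by construction the sequence $(a_n)$ is nondecreasing. Combined with the convention $a_0 = 0$ for the trivial case $i=1$ (where $w_1 = 1$ is already accounted for), this yields
$$w_i \leq 1 - a_{2i-2}, \qquad 1 \leq i \leq g.$$
The core computation is then that the multiset $\{\,l/(m_j+1) : 1\leq l\leq m_j,\; 1\leq j\leq k\,\}$ has exactly $\sum_j m_j = 2g-2$ elements, with total weight
$$\sum_{n=1}^{2g-2} a_n = \sum_{j=1}^{k}\sum_{l=1}^{m_j}\frac{l}{m_j+1} = \sum_{j=1}^{k}\frac{m_j}{2} = g-1.$$

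The final step is a simple parity argument: by monotonicity, $a_{2i} \geq a_{2i-1}$ for each $i$, so the sum over the $g-1$ even indices $\{2,4,\dots,2g-2\}$ is at least the sum over the $g-1$ odd indices $\{1,3,\dots,2g-3\}$, and together they total $g-1$; hence
$$\sum_{i=1}^{g} a_{2i-2} = 0 + (a_2 + a_4 + \cdots + a_{2g-2}) \geq \frac{g-1}{2}.$$
Combining this with the inequality $w_i \leq 1 - a_{2i-2}$ gives $\sum w_i \leq g - (g-1)/2 = (g+1)/2$, as required. There is no real obstacle here beyond careful bookkeeping of the indexing (particularly the $i=1$ case where $H_1(P)$ may be $0$); all substantive content is already contained in Theorems \ref{sumly} and \ref{YZ12b}.
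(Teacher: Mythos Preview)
Your argument is correct and is exactly the natural derivation from Theorems \ref{sumly} and \ref{YZ12b}; the paper itself does not spell out a proof here, merely citing \cite{YZ12b} and calling it a simple corollary. The only points worth noting are that the chain $w_i \le 1 - a_{H_i(P)} \le 1 - a_{2i-2}$ uses the monotonicity of $(a_n)$ in the correct direction (larger index gives larger $a$, hence smaller $1-a$, so the weaker bound $1-a_{2i-2}$ dominates), and that the convention $a_0=0$ is harmless since $w_1=1$ is known independently from the maximal Higgs line bundle.
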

The two equalities in the above formula are direct corollaries
of Theorem~\ref{sumly}.

\section{Convexity}
\label{s:convexity}

In \cite[section 12]{AB82}, Atiyah and Bott discussed the convexity
of polygons and the relation with Hermitian matrices. Shatz defines
the partial ordering by
$$\lambda\succeq \mu \textit{ if }P_{\lambda}\textit{ is above }P_{\mu}.$$
If we consider $P_{\mu}$ as the graph of a concave function $p_{\mu}$, then $p_{\mu}$ is defined on the integers by
$$p_{\mu}(i)=\sum_{j\leq i}\mu_j$$
and interpolates linearly between integers. Here the $\mu_j$ are the components of our $n-$vector $\mu$.

\begin{figure}[hb]
   %
   %
\includegraphics{L_and_HN_polygons.eps}
\begin{picture}(0,0)(-43,-13)
\put(-50,-30){$P_\lambda$}
\put(-50,-50){$P_{\mu}$}
\put(-87.5,-102){\small $1$}
\put(-59,-102){\small $2$}
\put(-29.5,-102){\small $3$}
\put(-5,-102){\small $\dots$}
\put(27,-102){\small $n$}
\end{picture}
\vspace{100pt}
\caption{The $P_{\lambda}$ lies above (or on) the $P_{\mu}$.}
\end{figure}

Hence, for our vector notation, it translates in to the following partial ordering:
 $$\lambda\succeq \mu \Leftrightarrow \left\{
  \begin{aligned}
    \sum^i_{j=1} \lambda_j\geq \sum^i_{j=1} \mu_j & \textit{ for } i=1,...,n-1; \\
    \sum^n_{j=1} \lambda_j= \sum^n_{j=1} \mu_j    &
  \end{aligned}
  \right.
.$$
This partial ordering on vectors in $\mathbb{R}^n$ is well known in various contexts.

This partial ordering occurs in Horn \cite{Ho54} where it is shown to be equivalent to either of the following properties
\begin{align}\label{confun}
\sum_j f(\mu_i)\leq \sum_j f(\lambda_j) \textit{ for every convex function } f:\mathbb{R}\rightarrow \mathbb{R};
\end{align}
$$ \mu=P\lambda \textit{ where } \lambda,\mu\in \mathbb{R}^n \text{ and } P \textit{ is a doubly stochastic matrix.}$$
We recall that a real square matrix is \textit{stochastic} if $p_{ij}\geq0$ and $\underset{j}{\sum} p_{ij}=1$ for all $i$. If in addition the transposed matrix is also stochastic then $P$ is called \textit{doubly stochastic}. A theorem of Birkhoff identifies doubly stochastic matrices in terms of permutation matrices, namely
$$\textit{The doubly stochastic } n\times n \textit{ matrices are the convex hull of the permutation matrices}.$$
Now the equivalence relation can be replace by
$$\widehat{\Sigma_n\mu}\subseteq \widehat{\Sigma_n\lambda}$$
where $\Sigma_nx$ denotes the orbit of any $x\in \mathbb{R}^n$
under the permutation group $\Sigma_n$ , and $\hat{C}$
denotes the convex hull of the set $C\subset \mathbb{R}^n$.

Schur showed that if $\mu_j (j = 1, ..., n)$ are the diagonal elements of a Hermitian matrix whose
eigenvalues are $\lambda_j$, then $\mu\preceq\lambda$. We give the proof for the largest eigenvalue, the proof is similar for general cases.

\begin{lemma}[Schur]\label{schur}
For a Hermitian matrix $H=[h_{ij}]$, let $\lambda_1\geq...\geq\lambda_n$ be its eigenvalues, then
$$\lambda_1\geq h_{ii}.$$
\end{lemma}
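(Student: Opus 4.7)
The plan is to prove the stated inequality via the Rayleigh quotient characterization of the largest eigenvalue of a Hermitian matrix. Concretely, I would first recall that for Hermitian $H$ one has
$$\lambda_1 = \max_{v \in \mathbb{C}^n \setminus \{0\}} \frac{\langle Hv,v\rangle}{\langle v,v\rangle},$$
and then plug in the standard basis vector $v = e_i$, which has unit norm and satisfies $\langle H e_i, e_i\rangle = h_{ii}$. This immediately gives $\lambda_1 \geq h_{ii}$, so the real content is establishing the variational formula above.

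First I would invoke the spectral theorem: since $H$ is Hermitian, there exists a unitary $U$ with $U^* H U = D = \mathrm{diag}(\lambda_1,\ldots,\lambda_n)$, where the eigenvalues are arranged in decreasing order. Given any unit vector $v$, write $w := U^* v$; then $w$ is also a unit vector (since $U$ is unitary), and
$$\langle Hv,v\rangle = \langle U D U^* v, v\rangle = \langle D w, w\rangle = \sum_{j=1}^n \lambda_j |w_j|^2.$$
Because $\sum_j |w_j|^2 = \|w\|^2 = 1$ and each $\lambda_j \leq \lambda_1$, the right-hand side is bounded above by $\lambda_1$, with equality attained when $w$ is a unit eigenvector for $\lambda_1$. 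This establishes the Rayleigh characterization.

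Next I would specialize to $v = e_i$, the $i$-th standard basis vector. A direct computation gives $\langle H e_i, e_i\rangle = h_{ii}$, and combining with the above chain of inequalities yields $h_{ii} \leq \lambda_1$, as desired.

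There is no real obstacle here: the proof is short and routine, and the only non-trivial input is the spectral theorem for Hermitian matrices, which is standard. For the general Schur inequalities $\sum_{i=1}^k h_{\sigma(i)\sigma(i)} \leq \sum_{i=1}^k \lambda_i$ (for any $k$ and any index subset $\sigma$), which the author alludes to when saying ``the proof is similar for general cases,'' the same argument generalizes by replacing the Rayleigh quotient with its Ky Fan / Courant-Fischer extension: the sum of the top $k$ eigenvalues equals the maximum of $\mathrm{tr}(V^* H V)$ over $n \times k$ matrices $V$ with orthonormal columns, and one tests against $V = [e_{\sigma(1)}|\cdots|e_{\sigma(k)}]$.
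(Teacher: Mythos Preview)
Your proof is correct and is essentially the same as the paper's: both diagonalize $H$ via a unitary $U$ and observe that $h_{ii}=\sum_j \lambda_j |u_{ij}|^2$ is a convex combination of the eigenvalues, hence $\leq \lambda_1$. You package this as the Rayleigh quotient characterization and then specialize to $v=e_i$, whereas the paper writes the computation for $e_i$ directly; once you set $w=U^*e_i$ the two arguments are literally identical.
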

\begin{proof}
Let $U=[u_{ij}]$ be a unitary matrix such that
$$H=U \mathrm{diag}[\lambda_1,...,\lambda_n]\overline{U}^T.$$
Since $\underset{j}{\sum}u_{ij}\overline{u}_{ij}=1$, we have
$$\lambda_1=\lambda_1(\underset{j}{\sum}u_{ij}\overline{u}_{ij})\geq \underset{j}{\sum} \lambda_ju_{ij}\overline{u}_{ij}=h_{ii}.$$
\end{proof}

Horn proved the converse so that another necessary and equivalent condition of the relation $\mu\preceq\lambda$.

\begin{NoNumberProposition}
Vectors $\mu$ and $\lambda$ satisfy relation
$\mu\preceq\lambda$ if and only if there exists a
Hermitian matrix with diagonal elements $\mu_j$ and eigenvalues
$\lambda_j$.
\end{NoNumberProposition}

For a general compact Lie group $G$, the role of the Hermitian (or rather skew-Hermitian)
matrices is played now by the Lie algebra $\mathfrak{g}$ of $G$. The diagonal matrices are replaced by the Lie
algebra $\mathfrak{t}$ of a maximal torus $T$ of $G$ and $\Sigma_n$ becomes the Weyl group $W$. Writing a set of  $\lambda_j$ in decreasing order corresponds to picking a (closed) positive Weyl chamber $C$ in $\mathfrak{t}$: this is a fundamental
domain for the action of $W$. (\cite{AB82})

We also need the following linear algebra fact:
\begin{lemma}\label{hb}
For a complex symmetric matrix $B=[b_{ij}]$, $H=B\overline{B}^T=[h_{ij}]$, $\alpha=[a_1,...,a_n]$,
$\alpha\overline{\alpha}^T=1$,
we have
$$\alpha H \overline{\alpha}^T \geq |\alpha B \alpha^T|^2.$$
\end{lemma}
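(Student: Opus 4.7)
The plan is to unpack the matrix product $\alpha H \overline{\alpha}^T$ using the factorization $H = B\overline{B}^T$, so that the desired inequality reduces to a Cauchy--Schwarz estimate in $\mathbb{C}^n$. Symmetry of $B$ will not actually play a role in the argument; only the factorization $H = B \overline{B}^T$ and the normalization $\alpha\overline{\alpha}^T = 1$ are used.

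First I would set $v := \alpha B$, viewed as a row vector. Then, using $(\alpha B)^T = B^T \alpha^T$ and the identity $\overline{B}^T\,\overline{\alpha}^T = \overline{B^T \alpha^T} = \overline{v}^T$, I would compute
\[
\alpha H \overline{\alpha}^T \;=\; \alpha B \,\overline{B}^T \overline{\alpha}^T \;=\; v\,\overline{v}^T \;=\; \sum_{j=1}^n |v_j|^2 \;=\; \|\alpha B\|^2.
\]
On the other hand, $\alpha B \alpha^T = v\alpha^T = \sum_{j=1}^n v_j a_j$ is a scalar quantity.

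The second and final step is Cauchy--Schwarz applied to the pairing $\sum_j v_j a_j$:
\[
|\alpha B \alpha^T|^2 \;=\; \Bigl|\sum_{j=1}^n v_j a_j\Bigr|^2 \;\leq\; \Bigl(\sum_{j=1}^n |v_j|^2\Bigr)\Bigl(\sum_{j=1}^n |a_j|^2\Bigr) \;=\; \|\alpha B\|^2 \cdot (\alpha \overline{\alpha}^T) \;=\; \|\alpha B\|^2,
\]
using the hypothesis $\alpha \overline{\alpha}^T = 1$ in the last equality. Combining this with the identity of the previous step gives $|\alpha B \alpha^T|^2 \leq \alpha H \overline{\alpha}^T$, which is the claim.

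There is no real obstacle here: the only thing to be careful about is tracking row versus column vectors and the placement of complex conjugation, so as to correctly identify $\alpha H \overline{\alpha}^T$ with $\|\alpha B\|^2$. Equality in Cauchy--Schwarz holds iff $v = \alpha B$ is a scalar multiple of $\overline{\alpha}$, which gives a transparent characterization of the extremal case should it be needed in the sequel (e.g.\ in comparing $\Lambda_j(\omega)$ to $|{\cdot}|^2$ of eigenvalues of $B_\omega$).
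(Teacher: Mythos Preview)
Your proof is correct and is in fact more streamlined than the paper's. The paper first invokes the Autonne--Takagi factorization $B = U\,\mathrm{diag}(\lambda_1,\dots,\lambda_n)\,U^T$ with $U$ unitary (this is where the symmetry hypothesis on $B$ is used), whence $H = U\,\mathrm{diag}(|\lambda_1|^2,\dots,|\lambda_n|^2)\,\overline{U}^T$; setting $\beta = \alpha U$ reduces the claim to the scalar inequality $\sum_i |\lambda_i|^2 |b_i|^2 \geq \bigl|\sum_i \lambda_i b_i^2\bigr|^2$, which the paper then deduces from Cauchy--Schwarz and $\sum_i |b_i|^2 = 1$. Your argument bypasses the spectral decomposition entirely by observing directly that $\alpha H \overline{\alpha}^T = \|\alpha B\|^2$ and applying Cauchy--Schwarz to $\langle \alpha B, \overline{\alpha}\rangle$. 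This is both shorter and, as you note, does not require $B$ to be symmetric. The paper's route has the minor contextual advantage that it makes the eigenvalue relation $EV(H_\omega) = \{|\lambda|^2 : \lambda \in EV(B_\omega)\}$ visible in the course of the proof, which is used elsewhere in Section~3.1; but as a proof of the lemma itself your version is cleaner.
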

\begin{proof}
There is a decomposition for any complex symmetric matrix
$$B=U\mathrm{diag}[\lambda_1,...,\lambda_n]U^T,$$
where $U=[u_{ij}]$ is unitary, $\underset{i}{\sum} u_{ji}\overline{u}_{ji}=1$,  $\alpha=[a_1,...,a_n]$,
$\underset{i}{\sum} a_i\overline{a}_i=1$.
Because
$$H=B\overline{B}^T=U\mathrm{diag}[|\lambda_1|^2,...,|\lambda_n|^2]\overline{U}^T,$$
we only need to show that
$$(\alpha U) \mathrm{diag}(|\lambda_1|^2,...,|\lambda_n|^2)\overline{(\alpha U)}^T \geq |(\alpha U) \mathrm{diag}(\lambda_1,...,\lambda_n) (\alpha U)^T|^2.$$
Let $\beta=\alpha U=[b_1,...,b_n]$, then $\beta \overline{\beta}^T=1$, we need to show that
$$\beta \mathrm{diag}(|\lambda_1|^2,...,|\lambda_n|^2)\overline{\beta}^T \geq |\beta \mathrm{diag}(\lambda_1,...,\lambda_n) \beta^T|^2.$$
This is
$$\sum |\lambda_i|^2|b_i|^2\geq|\sum \lambda_ib^2_i|^2.$$
By Cauchy inequality and $\sum|b_i|^2=1$, we know the inequality is right
$$(\sum|\lambda_i|^2|b_i|^2)(\sum|b_i|^2)\geq (\sum |\lambda_i||b_i|^2)^2\geq|\sum \lambda_ib^2_i|^2.$$
\end{proof}

\subsection{$\varepsilon\geq w$}
\label{ss:varepsilon:geq:w}
In this section we establish
a relation between integrals of
eigenvalue spectrum of the curvature of the Hodge bundle over a Teichm\"uller curve
and its Harder--Narasimhan slopes.

Recall that by $\mu_i(E)$ we denote the Harder--Narasimhan
slope of a holomorphic vector bundle over a curve $C$, and by $w_i(E)$
we denote the corresponding normalized Harder--Narasimhan
slope, so that $\mu_i(E)=(\chi/2) w_i(E)$, where $-\chi(C)=2g-2+|\Delta|$
is the Euler characteristic of the underlying curve $C$. By
$\mathrm{d}\sigma$ we denote the volume element~\eqref{eq:d:sigma}
on $C$.

The following theorem is essentially contained in
Atiyah--Bott~\cite{AB82}:

\begin{theorem}\cite[p.573-575]{AB82}\label{ab}
Let $E$ be a Hermitian vector bundle of rank $n$ on a Riemann surface
$M$ with a volume form $\mathrm{d}\sigma$, and let
$$\Lambda_1(E)\geq...\geq\Lambda_n(E),$$
be the eigenvalues of $\frac{i}{2\pi}\cdot\Theta(E)$.
For $1\leq k\leq n$,
we have
$$\sum^k_{j=1} \int_M\Lambda_j(E)\mathrm{d}\sigma\geq \sum^k_{j=1}  \mu_j(E).$$
It is equality when $k=n$.
\end{theorem}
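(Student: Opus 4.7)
The plan is to bound the degree of any holomorphic subbundle $F\subset E$ by the integral of the top Schur sum of the $\Lambda_j(E)$, apply this bound to the successive pieces of the Harder-Narasimhan filtration, and then propagate the resulting inequality from the HN corners to all $k$ by a concavity argument. The two main ingredients are the Gauss-Codazzi formula for the curvature of a subbundle and the Ky Fan extension of Lemma \ref{schur}.

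First, the case $k=n$ is equality by Chern-Weil: $\deg(E)=\tfrac{i}{2\pi}\int_M \mathrm{tr}\,\Theta(E)=\int_M\sum_{j=1}^n \Lambda_j(E)\,\mathrm{d}\sigma$, while $\sum_{j=1}^n\mu_j(E)=\deg(E)$ by definition of the HN type. For $k<n$ I would first prove the following key estimate: for any holomorphic subbundle $F\subset E$ of rank $r$,
\[
\deg(F)\le \int_M \sum_{j=1}^r \Lambda_j(E)\,\mathrm{d}\sigma.
\]
To see this, let $B$ be the second fundamental form of $F\subset E$, a $(1,0)$-form with values in $\mathrm{Hom}(F,E/F)$. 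The Gauss-Codazzi formula gives $\Theta(F)=\Theta(E)|_F - B^{*}\wedge B$, and in local holomorphic coordinates $B=b\,dz$ one checks $\tfrac{i}{2\pi}\mathrm{tr}(B^{*}\wedge B)=\tfrac{1}{\pi}\mathrm{tr}(b^{*}b)\,dx\wedge dy\ge 0$. Chern-Weil then yields $\deg(F)\le \tfrac{i}{2\pi}\int_M \mathrm{tr}(\Theta(E)|_F)$. Writing $\tfrac{i}{2\pi}\Theta(E)=H_E\,\mathrm{d}\sigma$ with $H_E$ the Hermitian endomorphism whose eigenvalues are the $\Lambda_j(E)$, the integrand becomes $\mathrm{tr}(\pi_F H_E \iota_F)$, i.e.\ the trace of $H_E$ restricted to the fiber $F_x$. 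A direct generalization of Lemma \ref{schur} — diagonalize $H_E$ in a unitary basis and use that the sum of the top $r$ eigenvalues majorizes any diagonal $r$-sum, exactly as in the proof of Lemma \ref{schur} — gives the pointwise bound $\mathrm{tr}(H_E|_{F_x})\le \sum_{j=1}^r \Lambda_j(E)(x)$, and integration finishes the step.

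Applying this bound to $F=HN_i(E)$ of rank $n_i$ yields $\sum_{j=1}^{n_i}\mu_j(E)=\deg(HN_i)\le \int_M\sum_{j=1}^{n_i}\Lambda_j\,\mathrm{d}\sigma$, so the desired inequality holds at every corner $k=n_i$ of the HN polygon (and at $k=n$ with equality). To extend to arbitrary $k$, observe that $P_{\Lambda}(k):=\sum_{j=1}^k \int_M \Lambda_j\,\mathrm{d}\sigma$ is concave because $\Lambda_1(x)\ge\cdots\ge\Lambda_n(x)$ pointwise implies $\int\Lambda_1\ge\cdots\ge\int\Lambda_n$, while $P_{\mu}$ is piecewise linear with corners precisely at the ranks $n_i$. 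On any interval $[n_{i-1},n_i]$ the function $P_{\mu}$ is the chord between points lying on or below $P_{\Lambda}$, and a concave function dominates such a chord; hence $P_{\mu}\le P_{\Lambda}$ on every such interval, completing the proof.

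The main obstacle is the geometric input in the middle paragraph: getting the sign right in the Gauss-Codazzi decomposition and verifying that the second-fundamental-form correction $\tfrac{i}{2\pi}\int \mathrm{tr}(B^{*}\wedge B)$ is genuinely non-negative (the Griffiths-positivity principle that a subbundle is ``less curved'' than its ambient bundle). A minor secondary point is that Lemma \ref{schur} is stated only for the largest eigenvalue, so one has to record the top-$r$-sum version; this is immediate from the same unitary-basis argument together with the fact that $(x_1,\dots,x_n)\mapsto \sum_{j=1}^r x_{(j)}$ is convex and Schur-concave, but it deserves an explicit sentence.
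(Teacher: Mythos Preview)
Your argument is correct and gives a cleaner route than the one the paper reproduces from Atiyah--Bott. The paper writes the curvature of $E$ in full block form relative to the entire Harder--Narasimhan filtration, keeps track of \emph{all} the off-diagonal second fundamental forms $\eta_{jk}$, integrates the block-diagonal part to obtain a diagonal matrix $[\mu+a]$ with $a\ge 0$ in the majorization order, and then appeals to the Horn/doubly-stochastic characterization of that order. You instead prove the single estimate $\deg F\le \int_M\sum_{j\le r}\Lambda_j(E)\,\mathrm{d}\sigma$ for an arbitrary rank-$r$ holomorphic subbundle (Gauss--Codazzi plus the Ky~Fan extension of Lemma~\ref{schur}), apply it only to the HN pieces, and then interpolate to all $k$ by concavity of $P_\Lambda$ against the piecewise-linear $P_\mu$. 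This avoids the multi-block bookkeeping and the majorization machinery entirely; what the paper's approach buys in exchange is the explicit identification of the defect vector $a$ coming from all the $\eta_{jk}$, which is closer to the original Atiyah--Bott framework and to questions about when equality holds.

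Two small points. First, your sign bookkeeping in Gauss--Codazzi is off by one flip in each of the two places (the formula is $\Theta(F)=\Theta(E)|_F+\beta^{*}\wedge\beta$ with $\tfrac{i}{2\pi}\mathrm{tr}(\beta^{*}\wedge\beta)\le 0$, rather than the version you wrote); the two flips cancel, so the inequality $\deg F\le \tfrac{i}{2\pi}\int\mathrm{tr}(\Theta(E)|_F)$ is correct, and you rightly flag this as the step to be careful with. Second, when you write ``$HN_i(E)$ of rank $n_i$'' you are using $n_i$ for the cumulative rank $\mathrm{rk}\,HN_i(E)$, which clashes with the paper's use of $n_i=\mathrm{rk}\,\mathrm{gr}^{HN}_i$; the argument is fine once the notation is disambiguated.
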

\begin{proof}
We shall begin by proving in the simple case when
$$\mu_1=\mu_2=...=\mu_r>\mu_{r+1}=...=\mu_n.$$
So that the Harder-Narasimahn filtration of the bundle $E$ has just two steps. We have an exact sequence of vector bundles
$$0\rightarrow D_1 \rightarrow E\rightarrow D_2 \rightarrow0$$
where $D_j$ has rank $m_j$ Chern class $k_j(j=1,2)$ so that $\mu_1=k_1/m_1$ and $\mu_n=k_2/m_2$. For convenience we shall use the notation $\mu^j=k_j/m_j(j=1,2)$. For the connection defined by the holomorphic structure and natural Hermitian metric. The curvature $\Theta(E)$ can then be written as the form
$$\Theta(E)=\begin{bmatrix}
    F_1-\eta\wedge\eta^* & d\eta \\
   -d\eta^*   & F_2-\eta^*\wedge\eta
  \end{bmatrix},
$$
where $F_j$ is the curvature of the metric connection of $D_j$, $\eta\in\Omega^{0,1}(M,Hom(D_2,D_1))$,$\eta^*$ is its transposed conjugate and $d\eta$ is the covariant differential. Now let $f_j,\alpha_j$ be scalar $m_j\times m_j$ matrices such that
$$\mathrm{trace}\,f_j=\mathrm{trace}\,*F_j$$
$$\mathrm{trace}\,\alpha_1=\mathrm{trace}\,*(\eta\wedge\eta^*)=-\mathrm{trace}\,*(\eta^*\wedge\eta)=-\mathrm{trace}\,\alpha_2.$$
We know that $\frac{i}{2\pi}*\Theta(E)$ is a Hermitian matrix. By the equivalence condition \ref{confun} of convexity, some elementary inequalities concerning convex invariant function $\phi$ show that
$$\phi(*\Theta(E))\geq\phi\begin{bmatrix}
    f_1-\alpha_1 & 0 \\
   0  & f_2-\alpha_2
  \end{bmatrix}.$$
 In particular by Lemma \ref{schur} it implies
  $$\Lambda_1(E)\geq \frac{i}{2\pi}\frac{\mathrm{trace}\,(f_1-\alpha_1)}{m_1}.$$
But the Chern class $k_j$ of $D_j$ is given by
$$k_j=\frac{i}{2\pi}\int_M \text{trace}\,f_j\mathrm{d}\sigma.$$
Since $f_j$ is scalar matrix this means that $\int_M \text{trace}f_j$ is scalar matrix whose diagonal entries are $-2\pi ik_j/m_j=-2\pi i \mu^j$. Also from (since $\eta\in \Omega^{0,1}$) it follows that $-i \text{trace} \alpha_1$ is non-negative and so
$$\int_M \alpha_1\mathrm{d}\sigma=2\pi i a_1,$$
where $a_1$ is non-negative scalar $m_1\times m_1$ matrix. Then
$$\int_M \alpha_2\mathrm{d}\sigma=2\pi i a_2,$$
where $a_2$ is non-negative scalar $m_2\times m_2$ matrix such that $\mathrm{trace}\,a_2=\mathrm{trace}\,a_1$. Hence we have
 $$ \frac{i}{2\pi}\int_M
  \begin{bmatrix}
  f_1-\alpha_1  & 0  \\
   0   &  f_2-\alpha_2
  \end{bmatrix}
  \mathrm{d}\sigma=[\mu+a],
$$
where $[\,]$ denotes the diagonal matrix defined by a vector, so that $[a]$ denotes the matrix $\begin{bmatrix}
       a_1 & 0          \\
       0    & a_2
\end{bmatrix}$.

But since $a_1\geq 0,a_2\leq0$ with $\mathrm{trace}\,a_1=-\mathrm{trace}\,a_2$ it follows easily that $\mu+a\geq\mu$ with respect to the partial ordering. Hence we have
$$ \int_M\Lambda_1(E)\mathrm{d}\sigma\geq  \frac{i}{2\pi}\int_M\frac{\mathrm{trace}\,(f_1-\alpha_1)}{m_1}\mathrm{d}\sigma\geq \frac{i}{2\pi}\int_M\frac{\mathrm{trace}\,f_1}{m_1}\mathrm{d}\sigma=\frac{k_1}{m_1}.$$
This completes the proof for the two-step case. The  general case proceeds in the same manner and we simply have to keep track of the notation. The details are as follows.

 We start with a holomorphic bundle $E$ with its canonical filtration of type $\mu$:
$$0=E_0\subset E_1\subset...\subset E_r= E,$$
where the quotients $D_j=E_j/E_{j-1}$ have normalized Chern classes $\mu^j$ with
$$\mu^1>\mu^2>...>\mu^r.$$
The curvature $\Theta(E)$ can then be expressed in a block form generalizing. For every $j<k$ we have an element
$$\eta_{jk}\in \Omega^{0,1}(M,Hom(D_k,D_j)),$$
so that $d\eta_{jk}$ appears in the $(j,k)$-block. The $\eta_{jk}$ are the components of the element
$$\eta_k\in \Omega^{0,1}(M,Hom(D_k,E_{k-1}))$$
related to the exact sequence
$$0\rightarrow E_{k-1}\rightarrow E_k\rightarrow D_k\rightarrow 0$$
Now define scalar non-negative $m_j\times m_j$ matrices $a_{jk}$ for $j<k$ by
$$\mathrm{trace}\,a_{jk}=\frac{1}{2\pi i}\int_M \mathrm{trace}\,(\eta_{jk}\wedge\eta^*_{jk})\mathrm{d}\sigma\geq 0,$$
and define $a_{kk}$ by
$$\mathrm{trace}\,a_{kk}=\frac{1}{2\pi i}\int_M\mathrm{trace}\,(\eta^*_k\wedge\eta_k)\mathrm{d}\sigma\leq 0,$$
so that $\underset{j\leq k}{\sum}\mathrm{trace}\,a_{jk}=0$. Then the convexity leads to the inequality
$$ \int_M\Lambda_1(E)\mathrm{d}\sigma\geq  \frac{i}{2\pi}\int_M\frac{\mathrm{trace}\,(f_1-\alpha_1)}{m_1}\mathrm{d}\sigma,$$
where $a$ stands for the vector (or diagonal matrix) whose $j$th block $a^j$ is the scalar (matrix)
$$a^j=\sum_{k\geq j}a_{jk}.$$
Equivalently the vector $a$ can be written as a sum
$$a=\sum b_k,$$
where $b_k$ is the vector corresponding to the diagonal matrix whose $j$th block is $a_{jk}$ for $j\leq k$(and zero for $j>k$). The fact that
$$\mathrm{trace}\,a_{jk}\geq 0\textit{ for }j<l\textit{ and } \sum_{j\leq k}\mathrm{trace}\,a_{jk}=0$$
implies that $b_k\geq 0$ relative to the partial ordering. Hence $a=\sum b_k\geq 0$ and so $\mu+a\geq \mu$. As before this then implies that
$$ \frac{i}{2\pi}\int_M\frac{\mathrm{trace}\,(f_1-\alpha_1)}{m_1}\mathrm{d}\sigma\geq \frac{i}{2\pi}\int_M\frac{\mathrm{trace}\,f_1}{m_1}\mathrm{d}\sigma=\frac{k_1}{m_1},$$
and so completes the general proof.
\end{proof}

Of course, the theorem implies the following corollary which was also noticed
earlier by M\"oller:
\begin{corollary}\label{kw}For a Teichm\"uller curve $C$, we have
   $$\varepsilon(C)\geq w(C).$$
\end{corollary}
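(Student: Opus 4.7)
The plan is to apply Theorem~\ref{ab} directly to $E = f_*\omega_{S/C}$ with the Hodge metric on the compactified Teichm\"uller curve $C$, and then renormalize both sides of the resulting inequality by $\chi/2$.

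The first step is to identify the pointwise eigenvalues $\Lambda_j(E)$ of $\tfrac{i}{2\pi}*\Theta(E)$ appearing in Theorem~\ref{ab} with the functions $\Lambda_j(\omega)$ on $C$ introduced in Section~3.1. This is exactly the content of the identity $\tfrac{i}{2\pi}\Theta_{H^{1,0}} = H_{\omega}\, \mathrm{d}\sigma$ recorded there, together with $EV(H_{\omega}) = \{|\lambda|^2 \mid \lambda \in EV(B_{\omega})\}$, which guarantees that these eigenvalues are real and non-negative with the correct ordering $1 = \Lambda_1(\omega) \geq \cdots \geq \Lambda_g(\omega) \geq 0$.

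Next, Theorem~\ref{ab} applied with $n = g$ yields, for every $1 \leq k \leq g$,
$$\sum_{j=1}^{k} \int_{C} \Lambda_j(\omega)\, \mathrm{d}\sigma \;\geq\; \sum_{j=1}^{k} \mu_j(f_*\omega_{S/C}),$$
with equality when $k = g$. Dividing through by $\chi/2$ and invoking the definitions (\ref{ki}) of $\varepsilon_j$ and (\ref{wi}) of $w_j$, this is precisely the partial-ordering inequality $\varepsilon(C) \geq w(C)$. The $k = g$ equality statement is moreover consistent with Theorem~\ref{sumly}, which independently gives $\sum_j \varepsilon_j = 2\deg f_*\omega_{S/C}/(2g(C) - 2 + |\Delta|) = \sum_j w_j$.

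The main obstacle, and really the only point beyond bookkeeping, is that Theorem~\ref{ab} is stated for a smooth Hermitian bundle on a compact Riemann surface, whereas the Hodge metric on $f_*\omega_{S/C}$ has the standard logarithmic singularities at the boundary points $\Delta \subset C$. I would therefore either check that the Atiyah--Bott curvature computation extends verbatim to the Deligne canonical extension equipped with its logarithmic Hodge metric, or else work on $C \setminus N_{\varepsilon}(\Delta)$ and let $\varepsilon \to 0$, controlling the boundary terms. The relevant degree bookkeeping is exactly the content underlying Theorem~\ref{sumly}, which certifies that $\int_C \Lambda_1(\omega)\,\mathrm{d}\sigma = \chi/2$ matches $\deg \mathcal{L}$ and, more generally, that $\int_C \mathrm{tr}(\tfrac{i}{2\pi}*\Theta(E))\,\mathrm{d}\sigma$ computes $\deg f_*\omega_{S/C}$; so I would invoke that boundary analysis to legitimize the application of Theorem~\ref{ab} in the logarithmic setting and conclude.
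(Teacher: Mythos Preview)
Your proposal is correct and follows essentially the same route as the paper: apply Theorem~\ref{ab} to $E=f_*\omega_{S/C}$ and divide the resulting inequalities by $\chi/2$, using the identification $\tfrac{i}{2\pi}\Theta_{H^{1,0}}=H_\omega\,\mathrm{d}\sigma$ and the definitions (\ref{ki}), (\ref{wi}). Your additional discussion of the logarithmic singularities of the Hodge metric at $\Delta$ is a legitimate technical point that the paper's proof does not address explicitly.
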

\begin{proof}Because
   $$\mu_1(f_*\omega_{S/C})=\frac{\chi}{2},$$
by formulae~\eqref{ki}, \eqref{wi} and by Theorem~\ref{ab}, we have
   $$\sum^k_{j=1}\varepsilon_i(C)=\int_C\Lambda_j(\omega)\mathrm{d}\sigma/\frac{\chi}{2}\geq \sum^k_{j=1}\mu_i(f_*\omega_{S/C})/\mu_1(f_*\omega_{S/C})=\sum^k_{j=1}\mu_i(C).$$
\end{proof}

\subsection{$2\varepsilon\geq \lambda$}
\label{ss:varepsilon:geq:lambda}
The Lyapunov exponents of a vector bundle
endowed with a connection also can be viewed as logarithms of mean eigenvalues of
monodromy of the vector bundle along a flow on the base.

In the case of the Hodge bundle, we take a fiber of $H^1_{\mathbb{R}}$ and pull it along a
Teichm\"uller geodesic flow on the moduli space. We wait till the geodesic (or K\"ahler random walks \cite{Ko13}) winds a lot and comes close to the initial point and then compute the resulting monodromy
matrix $A(t)$. Finally, let $s_1(t)\geq...\geq s_{2g}(t)$ be the eigenvalues of $A^TA$, we compute logarithms of $s_i(t)$ and normalize
them by twice the length $t$ of the geodesic
$$\lambda_i=\underset{t\rightarrow \infty}{\mathrm{lim}}\frac{\mathrm{log}s_i(t)}{2t}.$$

By the Oseledets multiplicative ergodic
theorem, for almost all choices of initial data (starting point, starting direction) the
resulting $2g$ real numbers converge as $t \rightarrow \infty$, to limits which do not depend on the
initial data within an ergodic component of the flow. These limits $\lambda_1 \geq...\geq\lambda_{2g}$
are the Lyapunov exponents of the Hodge bundle along the Teichm\"uller geodesic flow. (\cite{EKZ11})

A simple corollary of Lemma \ref{schur} is
\begin{corollary}Let $a_1(t)\geq...\geq a_{2g}(t)$ be the diagonal elements of $A^TA$, then
$$\lambda_1\geq \underset{t\rightarrow \infty}{\mathrm{lim}}\mathrm{sup}\frac{\mathrm{log}\,a_1(t)}{2t}.$$
\end{corollary}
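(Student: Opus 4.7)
The plan is to apply Schur's lemma (Lemma \ref{schur}) directly to the positive semidefinite Hermitian matrix $A^TA$ at each time $t$, and then pass to the limit.

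First, I would observe that $A(t)^T A(t)$ is Hermitian (in fact real symmetric and positive semidefinite), so Lemma \ref{schur} applies: its largest eigenvalue $s_1(t)$ dominates every diagonal entry, and in particular the largest diagonal entry, giving
\begin{equation*}
s_1(t) \geq a_1(t) \quad \text{for every } t > 0.
\end{equation*}
Since both quantities are non-negative (and positive once the flow has been run for any positive time, by positive definiteness), one may take logarithms and divide by $2t$ to obtain
\begin{equation*}
\frac{\log s_1(t)}{2t} \geq \frac{\log a_1(t)}{2t}.
\end{equation*}

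Next, I would pass to the limit as $t \to \infty$. By the Oseledets multiplicative ergodic theorem, as recalled just before the statement, the left-hand side converges (for almost every choice of initial data) to the top Lyapunov exponent:
\begin{equation*}
\lambda_1 = \lim_{t \to \infty} \frac{\log s_1(t)}{2t}.
\end{equation*}
Taking $\limsup$ on both sides of the previous inequality and using that the left-hand side has an honest limit, one gets
\begin{equation*}
\lambda_1 \;\geq\; \limsup_{t \to \infty} \frac{\log a_1(t)}{2t},
\end{equation*}
which is the desired estimate.

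There is no serious obstacle: the corollary is essentially a linear-algebra consequence of the inequality ``largest eigenvalue $\geq$ largest diagonal entry'' for Hermitian matrices, combined with the standard definition of $\lambda_1$ via Oseledets. The only mild point to mention explicitly is that the convergence of $\log s_1(t)/(2t)$ holds almost surely with respect to the relevant $SL_2(\mathbb{R})$-invariant ergodic measure, so that the inequality between a limit and a $\limsup$ is legitimate on that full-measure set.
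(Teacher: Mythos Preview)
Your proposal is correct and matches the paper's intent: the paper presents this result as ``a simple corollary of Lemma \ref{schur}'' without giving an explicit proof, and your argument---apply Schur's inequality $s_1(t)\geq a_1(t)$ pointwise, take $\log$, divide by $2t$, then pass to the $\limsup$ using the Oseledets definition of $\lambda_1$---is precisely the intended one-line deduction.
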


We confess that we do not know
any geometric interpretation of the quantity in the right hand
side of the latter inequality.

Forni has shown that the eigenvalues of curvature are closely related to Lyapunov exponents.
 Let $h(c)$ be the unique holomorphic form  such that $c$ is the cohomology class of the closed $1$-form $\mathrm{Re}\,h(c)$. By using the first variational formula \cite[p.19]{FMZ11}
$$\mathcal{L}\mathrm{log}||c||_{\omega}=-\frac{\mathrm{Re}B_{\omega}(h(c),h(c))}{||c||^2_{\omega}}\,,$$
(where $\mathcal{L}$ is the Lie derivative in direction $v=\omega^2$, and
$\|\cdot\|_\omega$ is the Hodge norm on $H^{1,0}(X)$)
he gets

\begin{corollary}\cite[Corollary 2.2]{Fo02}\label{fo1}
Let $\mu$ be any $\operatorname{SL}(2,\mathbb{R})$-invariant Borel probability ergodic
measure on the moduli space $\mathcal{H}_g$ of normalized Abelian differentials. The second Lyapunov exponent of the Kontsevich--Zorich cocycle with respect to the measure $\mu$, satisfies the following inequality
$$1> \int_{\mathcal{H}_g}\sqrt{|\Lambda_2(\omega)|}\mathrm{d}\mu(\omega)\geq\lambda^{\mu}_2.$$
\end{corollary}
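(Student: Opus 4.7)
The plan is to combine the first variational formula (recalled just above the statement) with Lemma \ref{hb} and Birkhoff's ergodic theorem. Recall that along a Teichm\"uller trajectory $\omega_t = g_t \cdot \omega$, the Hodge norm of a class $c \in H^1(X,\mathbb{R})$ with holomorphic representative $h_t(c) \in H^{1,0}(X_t)$ satisfies
$$\frac{d}{dt}\log \|c\|_{\omega_t} = -\frac{\mathrm{Re}\,B_{\omega_t}(h_t(c),\,h_t(c))}{\|c\|^2_{\omega_t}}.$$
The first step is to bound the right-hand side. Applying Lemma \ref{hb} with the unit vector $\alpha = h/\|h\|$ gives $|B_\omega(\alpha,\alpha)|^2 \leq \alpha H_\omega \overline{\alpha}^T$. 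Since the Hermitian form $H_\omega$ has top eigenvalue $\Lambda_1 = 1$ attained on the line spanned by $\omega$ itself, the Rayleigh quotient restricted to the Hodge-orthogonal complement of $\omega$ is bounded by $\Lambda_2(\omega)$. Hence for any class $c$ whose holomorphic part is Hodge-orthogonal to $\omega$,
$$\left|\frac{d}{dt}\log \|c\|_{\omega_t}\right| \leq \sqrt{\Lambda_2(\omega_t)}.$$

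Next I would feed this differential inequality into the Oseledets machinery. The top Oseledets plane realizing $\lambda_1 = 1$ is exactly the two-plane $\langle\mathrm{Re}\,\omega,\mathrm{Im}\,\omega\rangle$, and the Kontsevich--Zorich cocycle preserves its symplectic complement; on that complement the top exponent is precisely $\lambda_2^\mu$. Choosing a generic $c$ in this complement so that $h_t(S_t c)$ stays Hodge-orthogonal to $\omega_t$, integrating from $0$ to $t$ and dividing by $t$ gives
$$\frac{1}{t}\log\|S_t c\|_{\omega_t} \,\leq\, \frac{1}{t}\int_0^t \sqrt{\Lambda_2(\omega_s)}\, ds.$$
Since $\sqrt{\Lambda_2(\cdot)}$ is a bounded non-negative function on $\mathcal{H}_g$, Birkhoff's ergodic theorem applied to the $SL_2(\mathbb{R})$-invariant ergodic measure $\mu$ shows that the right-hand side converges $\mu$-a.e.\ to $\int_{\mathcal{H}_g} \sqrt{\Lambda_2(\omega)}\, d\mu(\omega)$. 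Passing to the $\limsup$ on the left yields $\lambda_2^\mu \leq \int \sqrt{\Lambda_2}\,d\mu$.

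Finally, the strict upper bound $\int \sqrt{\Lambda_2}\, d\mu < 1$ is immediate from the convention in the excerpt that $\Lambda_1(\omega) = 1 > \Lambda_2(\omega)$ pointwise, so $\sqrt{\Lambda_2(\omega)} < 1$ on a full-measure set and integration against the probability measure $\mu$ preserves the strict inequality.

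The main obstacle is the rigorous passage from the pointwise derivative bound to an inequality for the Lyapunov exponent: one must know that the optimal class realizing $\lambda_2^\mu$ can be chosen so that $h_t(S_t c)$ remains (asymptotically) Hodge-orthogonal to $\omega_t$, rather than picking up a component along the top eigenvector where only the weaker bound $\sqrt{\Lambda_1}=1$ is available. This is where the $SL_2(\mathbb{R})$-invariance enters decisively: the quotient of the Kontsevich--Zorich cocycle by the tautological two-plane $\langle\mathrm{Re}\,\omega,\mathrm{Im}\,\omega\rangle$ is a well-defined cocycle whose top Lyapunov exponent is $\lambda_2^\mu$, and on this quotient the holomorphic representative automatically lies in the orthogonal complement, so the $\sqrt{\Lambda_2}$ estimate applies uniformly.
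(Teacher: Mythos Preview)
The paper does not actually supply a proof of this corollary: it is quoted from \cite[Corollary~2.2]{Fo02}, and the only hint given is the first variational formula immediately preceding the statement. Your argument is precisely Forni's original one built on that formula, so there is nothing to compare against in this paper; your write-up is consistent with the route the paper points to.

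Two small clean-ups. First, invoking Lemma~\ref{hb} is heavier than necessary: from $(A_\omega h,\bar h)=-B_\omega(h,h)$ and Cauchy--Schwarz one gets directly $|B_\omega(h,h)|\le \|A_\omega h\|\cdot\|h\|=\sqrt{H_\omega(h,h)}\,\|h\|$, hence $|B_\omega(h,h)|/\|h\|^2\le\sqrt{\Lambda_2(\omega)}$ whenever $h\perp\omega$. Second, your ``main obstacle'' is not really an obstacle: the Kontsevich--Zorich cocycle is symplectic and preserves the tautological plane $\langle\mathrm{Re}\,\omega,\mathrm{Im}\,\omega\rangle$, hence also its symplectic complement, and a class $c$ lies in that complement if and only if its holomorphic representative $h(c)$ is Hodge-orthogonal to $\omega$. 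So once $c$ is chosen in the complement at time $0$, the orthogonality $h_t(S_t c)\perp\omega_t$ holds for all $t$ automatically; there is no need to pass to a quotient cocycle.
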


Let $\{c_1,...,c_k\}$ be any Hodge-orthonormal basis of any isotropic subspace $I_k\subset H^1_{\mathbb{R}}$. His second variational formula \cite[p.21]{FMZ11} is
$$\Delta\mathrm{log}||c_1\wedge...\wedge c_k||_{\omega}=2\Phi_k(\omega,I_k)$$
here $w_i=h(c_i)$, $\Delta$ is the leafwise hyperbolic Laplacian
for the metric of curvature $-4$ on the Teichm\"uller leaves and
$$\Phi_k(\omega,I_k)=2\sum^{k}_{i=1}H_{\omega}(\omega_i,\omega_i)-\sum^{k}_{i,j=1}|B_{\omega}(\omega_i,\omega_j)|^2.$$

Then Theorem \ref{sumly} can be deduced from the following
\begin{corollary}\cite{Fo02}\cite[Corollary 3.2]{FMZ11}\label{fo2} Let $\mu$ be any $SL(2,\mathbb{R})$-invariant Borel probability ergodic
measure on the moduli space $\mathcal{H}_g$ of normalized Abelian differentials. Assume
that there exists $k\in {1,...,g-1}$ such that $\lambda^{\mu}_k> \lambda^{\mu}_{k+1}\geq 0$. Then the following formula holds:
$$\lambda^{\mu}_1+...+ \lambda^{\mu}_k=\int_{\mathcal{H}_g}\Phi_k(\omega,E^+_k(\omega))\mathrm{d}\mu(\omega).$$
\end{corollary}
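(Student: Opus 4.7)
The plan is to combine the Oseledets multiplicative ergodic theorem with the second variational formula displayed just above, via an averaging argument on the Teichm\"uller disk that converts a first variation along the geodesic flow into the hyperbolic Laplacian.

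First I would use Oseledets's theorem together with the gap hypothesis $\lambda^{\mu}_k > \lambda^{\mu}_{k+1}$ to obtain, for $\mu$-a.e.\ $\omega$, a measurably varying top Oseledets subspace $E^+_k(\omega) \subset H^1(X,\mathbb{R})$ of dimension exactly $k$. Since its $k$ Lyapunov exponents are all non-negative while the symplectic form on $H^1$ pairs $\lambda_i$ with $-\lambda_i$, the subspace $E^+_k(\omega)$ is isotropic, so the expression $\Phi_k(\omega,E^+_k(\omega))$ is well defined. For a measurable Hodge-orthonormal frame $c_1(\omega),\dots,c_k(\omega)$ of $E^+_k(\omega)$, the definition of Lyapunov exponents reads
$$\lambda^{\mu}_1+\cdots+\lambda^{\mu}_k \;=\; \lim_{T\to\infty}\frac{1}{T}\log\bigl\|c_1\wedge\cdots\wedge c_k\bigr\|_{g_T\omega},$$
where the cohomology classes are propagated by Gauss--Manin while only the Hodge norm changes with $t$. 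Writing this limit as the time average along the Teichm\"uller flow of $\mathcal{L}\log\|c_1\wedge\cdots\wedge c_k\|$ and invoking Birkhoff's ergodic theorem against $\mu$ converts it into the space integral
$$\lambda^{\mu}_1+\cdots+\lambda^{\mu}_k \;=\; \int_{\mathcal{H}_g}\mathcal{L}\log\bigl\|c_1\wedge\cdots\wedge c_k\bigr\|_{\omega}\,d\mu(\omega).$$

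Next I would trade the Lie derivative $\mathcal{L}$ for the hyperbolic Laplacian $\Delta$ on the Teichm\"uller disk, using the $SO(2)$-invariance of $\mu$. On a given Teichm\"uller disk the underlying real cohomology (and hence the frame $c_i$) is constant while the complex structure is twisted by $r_{\theta}$, so after averaging the integrand over rotations and using the $r_{\theta}$-invariance of $\mu$, the operator $\mathcal{L}$ is replaced by $\tfrac{1}{2}\Delta$ under the integral sign (with the normalization of the hyperbolic metric of scalar curvature $-4$ adopted in the paper). The second variational formula recalled just above then gives $\Delta\log\|c_1\wedge\cdots\wedge c_k\|_{\omega}=2\,\Phi_k(\omega,E^+_k(\omega))$, and the factors of $2$ and $\tfrac{1}{2}$ cancel to yield the claimed identity.

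The main obstacle will be the $SO(2)$-averaging step: one must check that the chosen frame $c_i(\omega)$ is compatible with the rotation action (which follows from the triviality of the Kontsevich--Zorich cocycle along $SO(2)$-orbits, so that $E^+_k(r_{\theta}\omega)=E^+_k(\omega)$ almost everywhere) and that the resulting integrated identity involving $\Delta$ makes sense as a genuine integral on the non-compact moduli space, which requires care with integrability near the boundary. These technical points are already handled in Forni \cite{Fo02}; once they are in place, the argument reduces to assembling Birkhoff's theorem, $SL(2,\mathbb{R})$-invariance, and the second variational formula as sketched above.
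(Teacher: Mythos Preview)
The paper does not give its own proof of this corollary: it is quoted verbatim from \cite{Fo02} and \cite[Corollary~3.2]{FMZ11} and used as a black box (to deduce Theorem~\ref{sumly} and Corollary~\ref{kl}). Your sketch is essentially the argument of Forni: Oseledets plus the gap hypothesis produce the isotropic top space $E^+_k$, Birkhoff converts the growth rate of $\|c_1\wedge\cdots\wedge c_k\|$ into a space average of its Lie derivative, $SO(2)$-averaging trades $\mathcal{L}$ for $\tfrac{1}{2}\Delta$, and the second variational formula closes the loop. So there is nothing to compare against in this paper; your outline matches the proof in the cited references, and you have correctly flagged the genuine technical point (integrability of the Laplacian term near the boundary, handled in \cite{Fo02}).
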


We also give an upper bound of $\lambda_2$:
\begin{corollary}\label{kl}Let $\mu$ be any $SL(2,\mathbb{R})$-invariant Borel probability ergodic
measure on the moduli space $\mathcal{H}_g$ of normalized Abelian differentials. Then the following formula holds:
$$ 2\int_{\mathcal{H}_g}\Lambda_2(\omega)\mathrm{d}\mu(\omega)\geq\lambda^{\mu}_2.$$
In particular, for a Teichm\"uller curve
$$2\varepsilon_2\geq \lambda_2.$$
\end{corollary}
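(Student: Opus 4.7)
The plan is to apply Forni's Kontsevich-type formula (Corollary \ref{fo2}) twice---once at a suitable index $k\geq 2$ and once at $k=1$---and then subtract. The key computational observation driving the argument is that, in any Hodge-orthonormal basis $\omega_1=\omega,\omega_2,\ldots,\omega_g$ whose first element is the generating differential, a direct computation from the definition of $B$ yields
$$B_{\omega}(\omega,\omega_j)=\frac{i}{2}\int\frac{\omega\,\omega_j}{\omega}\overline{\omega}=\frac{i}{2}\int\omega_j\overline{\omega}=\delta_{1j}.$$
Hence $B$ is block-diagonal with corner $1$, which makes $\omega$ the unit eigenvector of $H_{\omega}=B\overline{B}^{T}$ realizing $\Lambda_1(\omega)=1$ and forces $H_{\omega}|_{\omega^{\perp}}$ to have spectrum $\Lambda_2(\omega)\geq\cdots\geq\Lambda_g(\omega)$.

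Assume $\lambda_2^{\mu}>0$ (otherwise the inequality is trivial) and let $k\geq 2$ be the largest index with $\lambda_k^{\mu}=\lambda_2^{\mu}$; either $\lambda_k^{\mu}>\lambda_{k+1}^{\mu}\geq 0$ so Corollary \ref{fo2} applies, or $k=g$ and the unconditional version $\sum_{i=1}^{g}\lambda_i=\int\Phi_g\,d\mu$ of Forni's formula (cf.\ Theorem \ref{sumly}) applies. Either way one has
$$\lambda_1^{\mu}+(k-1)\lambda_2^{\mu}=\int_{\mathcal{H}_g}\Phi_k(\omega,E_k^{+}(\omega))\,d\mu.$$
Subtracting the $k=1$ instance $\lambda_1^{\mu}=1=\int\Phi_1(\omega,E_1^{+})\,d\mu$, and picking a Hodge-orthonormal basis $\omega_1=\omega,\omega_2,\ldots,\omega_k$ of $h(E_k^{+})$, the tautological contributions $H_{\omega}(\omega,\omega)=|B_{\omega}(\omega,\omega)|^{2}=1$ together with the vanishing cross terms $B_{\omega}(\omega,\omega_j)=0$ for $j\geq 2$ make the $\omega_1$-row and column cancel out exactly, leaving
$$(k-1)\lambda_2^{\mu}=\int_{\mathcal{H}_g}\Bigl[\,2\sum_{i=2}^{k}H_{\omega}(\omega_i,\omega_i)-\sum_{i,j=2}^{k}|B_{\omega}(\omega_i,\omega_j)|^{2}\,\Bigr]\,d\mu.$$

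The final step is a termwise bound: each $\omega_i$ for $i\geq 2$ is a unit vector in $\omega^{\perp}$, so the eigenvalue observation of the first paragraph gives $H_{\omega}(\omega_i,\omega_i)\leq \Lambda_2(\omega)$, while the $|B|^{2}$-terms are non-negative and may be dropped. The integrand is therefore bounded pointwise by $2(k-1)\Lambda_2(\omega)$, and dividing by $k-1$ yields $\lambda_2^{\mu}\leq 2\int_{\mathcal{H}_g}\Lambda_2(\omega)\,d\mu$. The Teichm\"uller-curve specialization $2\varepsilon_2\geq\lambda_2$ then follows immediately from the normalization $d\mu=d\sigma/(\chi/2)$ together with \eqref{ki}. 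The main obstacle I anticipate is not any single estimate but the bookkeeping at multiple Lyapunov exponents: one must verify that $E_k^{+}$ genuinely contains the tautological direction (using $SL(2,\mathbb{R})$-invariance and the fact that $\lambda_1=1$ is realized precisely by $\omega$), and organize the Hodge basis coherently so that the $\Phi_1$-subtraction is a legitimate identity rather than a formal accident, together with the boundary case $k=g$ where the hypothesis of Corollary \ref{fo2} just fails and one must substitute the unconditional top-sum formula.
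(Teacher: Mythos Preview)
Your proof is correct and follows essentially the same route as the paper: both apply Forni's formula (Corollary~\ref{fo2}) at the first index $k\geq 2$ where $\lambda_k^{\mu}>\lambda_{k+1}^{\mu}$, subtract off the tautological $k=1$ contribution using the block structure $B_{\omega}(\omega,\omega_j)=\delta_{1j}$, and then bound the remaining diagonal $H$-terms by $\Lambda_2$ via Schur's inequality while discarding the non-negative $|B|^{2}$-terms. Your write-up is more explicit about the subtraction and the choice of Hodge-orthonormal basis; the paper's version is terser (it treats the case $\lambda_2^{\mu}>\lambda_3^{\mu}$ first with a single class $c$ and then handles the degenerate case by passing through the intermediate bound $\Lambda_2+\cdots+\Lambda_k$), but the underlying argument is the same.
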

\begin{proof} For any $c\in \langle[\mathrm{Re}(\omega)], [\mathrm{Im}(\omega)]\rangle^{\bot}$, Lemma \ref{schur}implies
\begin{align}\label{el}2\Lambda_2(\omega)\geq \frac{2H_{\omega}(h(c),h(c))}{||c||^2_{\omega}} \geq \frac{2H_{\omega}(h(c),h(c))}{||c||^2_{\omega}}-\frac{|B_{\omega}(h(c),h(c))|^2}{||c||^4_{\omega}}.
\end{align}
Let $c$ be Kontsevich-Zorich cocycle with Lyapunov exponents $\lambda^{\mu}_2$. If $\lambda^{\mu}_2>\lambda^{\mu}_3$, by Corollary \ref{fo2}
$$2\int_{\mathcal{H}_g} \Lambda_2(\omega)\mathrm{d}\mu(\omega)\geq\int_{\mathcal{H}_g} \big(\frac{2H_{\omega}(h(c),h(c))}{||c||^2_{\omega}}-\frac{|B_{\omega}(h(c),h(c))|^2}{||c||^4_{\omega}}\big)\mathrm{d}\mu(\omega)=\lambda^{\mu}_2.$$
If $\lambda^{\mu}_2=...=\lambda^{\mu}_k>\lambda^{\mu}_{k+1}$, by Corollary \ref{fo2}, the result also can be deduced from
$$2(k-1)\int_{\mathcal{H}_g} \Lambda_2(\omega)\mathrm{d}\mu(\omega)\geq2\int_{\mathcal{H}_g} (\Lambda_2(\omega)+..+\Lambda_k(\omega))\mathrm{d}\mu(\omega)\geq\lambda^{\mu}_2+...+\lambda^{\mu}_k=(k-1)\lambda^{\mu}_2.$$

\end{proof}
The fiberwise inequality \ref{el} does not imply $\varepsilon_2\geq \lambda_2$. Because $\sqrt{|\Lambda_2(\omega)|}\geq 2\Lambda_2(\omega)$ if and only if $\frac{1}{4}\geq\Lambda_2(\omega)$, in contrast to Corollary \ref{fo1}, this Corollary is useful when $\varepsilon_2$ is small. Similarly we can get
$$2\sum^i_{j=2} \varepsilon_j\geq \sum^i_{j=2} \lambda_j \text{ for } i=2,...,n.$$
Or weak form
$$2\varepsilon(C)\geq \lambda(C).$$
(Warning: obviously $2\underset{j=1}{\overset{g}{\sum}} \varepsilon_j\neq 2\underset{j=1}{\overset{g}{\sum}} \lambda_j$. Here $\geq$ only means lying above.)

It will be interesting to
get the best inequality between $\varepsilon(C)$ and $\lambda(C)$.

\subsection{Hodge and Newton polygons}
\label{Hodge:and:Newton:polygons}
Since the Main Conjecture is originally inspired by
the Katz--Mazur theorem, we briefly recall this theorem.

Let $k$ be a finite field of $q=p^a$ elements; let $W$ denote its ring of Witt vectors, and $K$ the field of fractions of $W$. Let $X$ be projective and smooth over $W$, and such that the $W$-modules $H^r(X,\Omega^s_{X/W})$ are free (of rank $h^{s,r}$) for all $s,r$.

Form the polynomial
$$H_m(t)=\overset{m}{\underset{s=0}{\Pi}}(1-q^st)^{h^{s,m-s}}\in Z[t]\subset K[t],$$
which can be called the $m-$dimensional Hodge polynomial of $X/W$.
set
$$Z_m(t)=\mathrm{det}(1-F^a|_{H^m_{DR}(X/W)}t)\in K[t],$$
where $F$ is the canonical lifting of Frobenius on de Rham cohomology.

Now, for any polynomial of the form,
$R(t)=1+R_1t+R_2t^2+...+R_{\beta}t^{\beta}\in K[t].$
Mazur defined the polygon of $R(t)$ to be the convex closure in the Euclidean plane of the finite set points
$$(j,\mathrm{ord}_q(R_j)),j=0,1,...,\beta,$$
where $\mathrm{ord}_q(q)=1.$ The left most vertex of this polygon is the origin, while the right-most is $(\beta,\mathrm{ord}_q(R_\beta))$. The structure of this polygon is a measure of the $p$-adic valuations of the zeros of $R$.

According to our definition, the convex polygon is
$$(j,\mathrm{ord}_q(R_{\beta-j+1})),j=0,1,...,\beta.$$

Now what Katz conjectured and Mazur proved is

\begin{theorem}\cite{Ma72}\cite{Ma73}\label{km}The convex polygon of $H_m(t)$ (i.e.Hodge polygon) lies above (or on) the  convex polygon of $Z_m(t)$ (i.e.Newton polygon).
\end{theorem}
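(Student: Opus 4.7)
The plan is to lift the problem to crystalline cohomology, where Frobenius acts integrally and interacts with the Hodge filtration in a controlled way. By Berthelot's comparison theorem, $H^m_{DR}(X/W) \cong H^m_{cris}(X_0/W)$ (with $X_0 = X\otimes_W k$), and under this isomorphism the canonical lift $F$ of Frobenius endows $H^m_{DR}$ with a $\sigma$-linear endomorphism whose $a$-th iterate has characteristic polynomial $Z_m(t)$. The Hodge filtration $\mathrm{Fil}^\bullet H^m_{DR}$ induced from the stupid filtration on the de Rham complex has graded pieces $\mathrm{Fil}^s/\mathrm{Fil}^{s+1}\cong H^{m-s}(X,\Omega^s_{X/W})$, free of rank $h^{s,m-s}$.

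The geometric heart of the argument is \textbf{Mazur's divisibility lemma}:
\[
F\bigl(\mathrm{Fil}^i H^m_{cris}(X_0/W)\bigr) \subset p^i \, H^m_{cris}(X_0/W).
\]
To prove this one works locally: choose a smooth lift $\widetilde{X}/W$ together with a lift $\widetilde{F}$ of the absolute Frobenius of $X_0$; on étale coordinates $\widetilde{F}^*(x_j) \equiv x_j^p \pmod p$, so $\widetilde{F}^*(dx_j) = p\,x_j^{p-1}dx_j + p(\cdots) \in p\cdot\Omega^1$, and consequently $\widetilde{F}^*$ is divisible by $p^s$ on $\Omega^s$. Since $\mathrm{Fil}^i$ is represented in the de Rham complex by forms of degree $\geq i$, the divisibility passes to cohomology, and a \v{C}ech/crystalline-site descent argument patches the local lifts into a global, canonical statement independent of all choices.

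Granted the divisibility, the remainder is linear algebra of a Schur type (compare Lemma \ref{schur}). Choose a basis of $H^m_{DR}(X/W)$ adapted to $\mathrm{Fil}^\bullet$, so that each basis vector is assigned a Hodge weight $i(k)$ with $\#\{k:i(k)=s\}=h^{s,m-s}$; then the matrix of $F$ in this basis has its $k$-th column divisible by $p^{i(k)}$. Hence every $j\times j$ minor of $F^a$, and therefore the coefficient $R_{\beta-j}$ of $Z_m(t)=\det(1-F^a t)$, has $q$-adic valuation at least $\sum_{k=1}^{j} i(\sigma(k))$ for some injection $\sigma$; minimizing over all such injections yields precisely the sum of the $j$ smallest Hodge slopes, which is the height of the Hodge polygon at abscissa $j$. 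Reflecting $j\mapsto \beta-j$ to match the paper's upper-convex convention gives the stated inequality between $H_m$ and $Z_m$.

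The main obstacle is Mazur's divisibility lemma itself. In Mazur's original proof it was obtained before the crystalline machinery was fully available, via a delicate comparison of the Hodge-to-de Rham spectral sequence with the conjugate spectral sequence mediated by the Cartier isomorphism; the crystalline proof sketched above owes its existence to Berthelot--Ogus and replaces that spectral-sequence argument with a descent on the crystalline site. The overall structure is strikingly parallel to Atiyah--Bott's Theorem \ref{ab} above, with $p$-adic divisibility of Frobenius on the Hodge filtration playing the role that the positivity of $\overline{A^{1,0}}^T\wedge A^{1,0}$ plays in the curvature estimate there.
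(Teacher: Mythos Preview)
The paper does not contain a proof of Theorem~\ref{km}; it is quoted as a result of Mazur \cite{Ma72}, \cite{Ma73} and used only as motivation and analogy for the main conjecture. There is therefore nothing in the paper to compare your argument against.

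That said, your sketch is a faithful outline of the standard proof. The divisibility $F(\mathrm{Fil}^i)\subset p^i H^m_{cris}$ is indeed the crux, and the crystalline formulation you give (via Berthelot's comparison and the local calculation $\widetilde{F}^*(dx_j)\in p\,\Omega^1$) is the clean modern version; as you note, Mazur's original argument in \cite{Ma73} predates the full crystalline machinery and instead proceeds by a careful analysis of the two spectral sequences converging to de Rham cohomology, linked by the Cartier isomorphism. The linear-algebra step you describe---choosing a basis adapted to the filtration so that the $k$-th column of the Frobenius matrix is divisible by $p^{i(k)}$, and then bounding the valuations of the minors---is exactly how one extracts the polygon inequality from the divisibility, and the parallel you draw with Lemma~\ref{schur} and Theorem~\ref{ab} is apt and in the spirit of the paper's Section~4. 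One small point: your reduction to ``the sum of the $j$ smallest Hodge slopes'' is correct but you should be explicit that this uses $F^a$ rather than $F$, so the divisibility becomes $q^{i(k)}$-divisibility of columns after iterating $a$ times (and composing with $\sigma$-twists), which is what matches the $q$-adic valuations defining the Newton polygon of $Z_m(t)$.
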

  A motivation to make the main conjecture is that we try to understand the sentence "Lyapunov exponents as Dynamical Hodge decomposition" (\cite{KZ97},\cite[p.37]{Zo06}).
\begin{corollary}\cite{Ma72}\cite{Ma73}If the Hodge numbers $h^{s,m-s}$ vanish for $0\leq s<t$, then the eigenvalues of $F^a$ acting on $H^m_{DR}(X/W)$ are divisible by $q^t$.
\end{corollary}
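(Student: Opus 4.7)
The plan is to read off the corollary directly from Theorem \ref{km} by comparing slopes. Let me first recall what the two polygons measure and then explain why the slope inequality on the first segments translates into the divisibility statement on eigenvalues.

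First I would unpack the Hodge polygon. By construction
$$H_m(t)=\prod_{s=0}^{m}(1-q^s t)^{h^{s,m-s}},$$
so $H_m(t)$ has a zero of $q$-valuation $s$ with multiplicity $h^{s,m-s}$. Consequently the successive slopes of the Hodge polygon are exactly the integers $s\in\{0,1,\dots,m\}$, each occurring with multiplicity $h^{s,m-s}$. Under the hypothesis $h^{s,m-s}=0$ for $0\le s<t$, every slope of the Hodge polygon is therefore $\geq t$. In particular the initial (extremal) slope of the Hodge polygon is at least $t$.

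Next I would interpret the Newton polygon. By definition of $Z_m(t)=\det(1-F^a|H^m_{DR}(X/W)\,t)$, its slopes are precisely the $q$-adic valuations $\mathrm{ord}_q(\alpha)$ of the eigenvalues $\alpha$ of $F^a$ acting on $H^m_{DR}(X/W)$, each eigenvalue contributing one slope. So showing that every eigenvalue of $F^a$ is divisible by $q^t$ is equivalent to showing that every slope of the Newton polygon is $\geq t$.

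Now I would combine the two via Theorem \ref{km}. The Hodge and Newton polygons share the same endpoints (both start at the origin and end at $(\beta,\sum_j j\cdot m_j)$ where the equality of endpoints comes from $\deg H_m=\deg Z_m=\beta$ and from the equality $\sum_s s\,h^{s,m-s}=\mathrm{ord}_q\det(F^a|H^m_{DR})$). Since the Hodge polygon lies above (or on) the Newton polygon and both are convex with common endpoints, the extremal slope of the Newton polygon is at least the extremal slope of the Hodge polygon. Because every slope of the Hodge polygon is $\geq t$ and the slopes of the Newton polygon are monotone, this forces every slope of the Newton polygon to be $\geq t$ as well. Thus $\mathrm{ord}_q(\alpha)\geq t$ for every eigenvalue $\alpha$ of $F^a$, i.e.\ $q^t\mid \alpha$, which is the desired divisibility.

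The only subtle step is the geometric comparison of extremal slopes; this is essentially the content of the partial ordering on convex polygons recalled in Section 4, applied in the specific arithmetic setting of Katz--Mazur. All the rest is bookkeeping about the factorization of $H_m(t)$ and the definition of $Z_m(t)$.
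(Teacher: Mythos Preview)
Your derivation is correct and is precisely the intended argument: once Theorem~\ref{km} is in hand, the corollary follows by comparing the smallest slopes of the two polygons, since the hypothesis $h^{s,m-s}=0$ for $s<t$ forces every Hodge slope to be $\geq t$, and the polygon inequality (with common endpoints) then forces every Newton slope---i.e.\ every $\mathrm{ord}_q(\alpha)$---to be $\geq t$ as well. The paper itself does not supply a proof here; it simply cites \cite{Ma72},\cite{Ma73} and records the statement, so there is no alternative argument to compare against.

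One small point of care: the paper's polygon convention (Section~4) lists slopes in \emph{decreasing} order and reverses the coefficient indexing, so ``Hodge above Newton'' in Theorem~\ref{km} is the mirror image of the usual ``Newton above Hodge'' from \cite{Ma72}. Your phrase ``the extremal slope of the Newton polygon is at least the extremal slope of the Hodge polygon'' is correct provided ``extremal'' means the \emph{smallest} slope; in the paper's convention this is the last segment, and the inequality follows from the penultimate vertex of the Hodge polygon lying above that of the Newton polygon while the final vertices coincide. You might make this explicit to avoid ambiguity.
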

We think the Proposition \ref{zero} is an analogy of this Corollary.

\section{Conditional results and further conjectures} 
\label{s:Conditional:results}

\subsection{$\lambda\geq w?$}
\label{ss:lambda:ge:w}
Zorich asks the following question about the correspondence between Lyapunov exponents and characteristic numbers of some natural bundles. He sets this as the last problem of his survey.
\begin{problem}\cite[p.135]{Zo06}
Study individual Lyapunov exponents of the Teichm\"uller geodesic flow
\begin{itemize}
\item
for all known $SL(2;R)$-invariant subvarieties;
\item
for strata in large genera.
\end{itemize}
Are they related to characteristic numbers of some natural bundles over
appropriate compactifications of the strata?
\end{problem}
Originally inspired by Katz--Mazur Theorem \ref{km} in $p$-adic Hodge
theory, we state our Main Conjecture after checking over all
available numerical data\footnote{By the time the manuscript
was submitted to the journal, a proof of this conjecture was
announced by Eskin--Kontsevich--M\"oller--Zorich in~\cite{EKMZ}.}:
\begin{conjecture}\label{main}For any Teichm\"uller curve, we have
$$\lambda(C) \succeq w(C).$$
\end{conjecture}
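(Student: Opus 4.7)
The plan is to split the conjecture into an identity of total sums and a sequence of partial-sum inequalities, and then attack each partial-sum inequality by exhibiting a sub-VHS of the Hodge bundle whose $(1,0)$-part has large enough degree.

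First, the top equality $\sum_{j=1}^g \lambda_j = \sum_{j=1}^g w_j$ is automatic: Theorem \ref{sumly} applied to the full Hodge bundle gives $\sum \lambda_j = 2\deg f_*\omega_{S/C}/\chi$, while by definition $\sum w_j = \deg f_*\omega_{S/C}/(\chi/2)$. Hence it remains to prove $\sum_{j=1}^i \lambda_j \geq \sum_{j=1}^i w_j$ for $1 \leq i \leq g-1$. Write the Harder-Narasimhan filtration as $0 \subset HN_1 \subset \cdots \subset HN_s = f_*\omega_{S/C}$ with $\mathrm{rk}\,HN_l = r_l$ and $\deg HN_l = d_l$, so that $\sum_{j=1}^{r_l} w_j = 2d_l/\chi$. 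Since the $w_j$ are constant between breakpoints, concavity of the Lyapunov polygon reduces the desired inequalities to the special indices $i = r_l$, namely
\[
\sum_{j=1}^{r_l}\lambda_j \;\geq\; \frac{2d_l}{\chi}, \qquad l = 1,\ldots,s-1.
\]

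The mechanism to bound $\sum_{j=1}^{r_l}\lambda_j$ from below is again Theorem \ref{sumly}. For any sub-VHS $\mathbb{W}\subset R^1 f_*\mathbb{Q}$ of rank $2k$, the $k$ non-negative Lyapunov exponents of $\mathbb{W}$ form a subset of the full non-negative spectrum, so in sorted order $\lambda_j^{\mathbb{W}} \leq \lambda_j$ and
\[
\sum_{j=1}^{k}\lambda_j \;\geq\; \sum_{j=1}^{k}\lambda_j^{\mathbb{W}} \;=\; \frac{2\deg\mathbb{W}^{(1,0)}}{\chi}.
\]
Thus the conjecture at level $l$ follows as soon as one can produce a sub-VHS $\mathbb{W}_l$ of rank $2r_l$ with $\deg\mathbb{W}_l^{(1,0)} \geq d_l$. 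For the Teichm\"uller curves arising from square-tiled or cyclic covers, triangle groups, and the other low-genus families appearing in the literature, the monodromy representation has an explicit isotypic decomposition producing many candidate sub-VHS; combining them with the exact formulas for $w_i$ in Theorem \ref{YZ12a}, the upper bounds in Theorem \ref{YZ12b}, and the numerical $\lambda_i$ recorded in Table \ref{one} reduces the conjecture to a finite case-check that should dispatch almost all Teichm\"uller curves in low genus strata.

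The principal obstacle to a proof in full generality is that the Harder-Narasimhan filtration is a holomorphic/algebraic object, whereas sub-VHS correspond to $\mathbb{Q}$-monodromy-invariant subspaces, a much more restrictive condition. For a generic Teichm\"uller curve one has no reason to expect a sub-VHS of each required rank and degree matched to the HN breakpoints. The companion inequality $\varepsilon \geq w$ (Corollary \ref{kw}) escapes this difficulty because the Atiyah-Bott argument of Theorem \ref{ab} exploits the direct visibility of the holomorphic HN structure in the Hermitian curvature; Lyapunov exponents, being dynamical averages of the monodromy, do not see the HN filtration so cleanly. Closing this gap in general --- perhaps by a dynamical analogue of the Atiyah-Bott convexity estimate applied directly to the Kontsevich-Zorich cocycle, or by a Higgs-bundle/harmonic-metric argument in the spirit of Simpson's correspondence combined with the $2\varepsilon \geq \lambda$ bound of Corollary \ref{kl} --- is where a truly uniform proof seems to require a genuinely new idea, which is why the author only claims the conjecture for almost all Teichm\"uller curves in low genus strata.
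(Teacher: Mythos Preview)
This is a conjecture, not a theorem: the paper does not prove it, so there is no ``paper's proof'' to match. What the paper does establish is the much weaker statement that the conjecture holds for \emph{almost all} Teichm\"uller curves in certain low-genus strata, and the argument there is entirely different from yours. The paper's route is: (i) for each stratum, take the \emph{numerical approximations} of the $\lambda_i$ for the Masur--Veech measure from Tables~\ref{four}, \ref{five}; (ii) take the exact or bounded values of $w_i$ from Theorems~\ref{YZ12a}, \ref{YZ12b}, which are \emph{constant} over all Teichm\"uller curves in a given stratum/component; (iii) check the inequalities numerically at the stratum level; (iv) invoke the continuity of Lyapunov exponents (Theorem~\ref{EBW}) to conclude that a sequence of counterexamples could not be dense, hence only finitely many curves can fail (Remark after Example~\ref{ex}). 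No sub-VHS enters at all.

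Your proposed mechanism --- producing, for each Harder--Narasimhan breakpoint $r_l$, a sub-VHS $\mathbb{W}_l$ of rank $2r_l$ with $\deg\mathbb{W}_l^{(1,0)}\ge d_l$ --- is not supported even in the special families you cite. Isotypic pieces of the monodromy for square-tiled or cyclic covers give sub-VHS, but there is no reason their ranks and $(1,0)$-degrees align with the HN breakpoints of $f_*\omega_{S/C}$; indeed the HN filtration is holomorphic and typically not monodromy-invariant, as you yourself note in the last paragraph. So the sentence ``reduces the conjecture to a finite case-check that should dispatch almost all Teichm\"uller curves in low genus strata'' is where the gap sits: you have not exhibited any such sub-VHS, and the paper's actual verification does not use them. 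Your reduction to breakpoints and the total-sum identity are fine; the substantive step is simply missing, and the paper's own partial verification bypasses it by a numerical/continuity argument rather than an algebraic one.
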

That is
$$\left\{
  \begin{aligned}
    \sum^i_{j=1} \lambda_j\geq \sum^i_{j=1} w_j & \text{ for } i=1,...,g-1; \\
    \sum^g_{j=1} \lambda_j= \sum^g_{j=1} w_j    &.
  \end{aligned}
  \right.
.$$
Or, equivalently: $\overset{g}{\underset{j=i}{\sum}}\lambda_j\leq \overset{g}{\underset{j=i}{\sum}} w_j, \text{ for } i=2,...,g$; and $\overset{g}{\underset{j=1}{\sum}}\lambda_j=\overset{g}{\underset{j=1}{\sum}} w_j$.

\begin{remark}The result
$$\varepsilon(C)\succeq w(C)$$
in Corollary \ref{kw} has its root in
$$\Lambda_1(\omega)\geq H_{\omega}(\omega_i,\omega_i).$$
The result
$$2\varepsilon(C)\succeq \lambda(C)$$
in Corollary \ref{kl} has its root in
$$ 2\Lambda_1(\omega)\geq \frac{2H_{\omega}(h(c),h(c))}{||c||^2_{\omega}}-\frac{|B_{\omega}(h(c),h(c))|^2}{||c||^4_{\omega}}.$$
The Lemma \ref{hb} or \cite[p.22]{FMZ11} gives us
$$\frac{2H_{\omega}(h(c),h(c))}{||c||^2_{\omega}}-\frac{|B_{\omega}(h(c),h(c))|^2}{||c||^4_{\omega}}\geq \frac{H_{\omega}(h(c),h(c))}{||c||^2_{\omega}}.$$
We suspect that the Main Conjecture might have its root in the latter inequality.
\end{remark}

When the equality is attained, we also make the following
``rigidity'' conjecture:
\begin{conjecture}
If for some Teichm\"uller curve in the moduli space of Abelian differentials
of genus $g$ the following equality
$$\overset{k}{\underset{j=1}{\sum}}\lambda_j=\overset{k}{\underset{j=1}{\sum}}w_j$$
is achieved for some $k<g$ and
$$w_k\neq w_{k+1}$$
then the corresponding VHS contains
a rank $2(g-k)$ local subsystem.
\end{conjecture}

It can be considered as the inverse to Kontsevich--Forni's
formula~\ref{sumly}. We will illustrate an example
of a simple corollary of the two conjectures in
Proposition~\ref{zero}.

\begin{problem}[Lower Continuity Conjecture]
\label{pr:Continuity:Conjecture}
\label{continuous} We also hope that one can define
a $g$-vector $w(\mathcal{M})$
(of algebro-geometric origin) which would generalize
the normalized Harder--Narasimhan slope type from Teichm\"uller
curves to \textit{all} $\operatorname{GL}(2,\mathbb{R})$-invariant
submanifolds $\mathcal{M}$ in the moduli space of Abelian
differentials, and which would have the following natural
properties:
\begin{enumerate}
  \item
  \textit{If for each $i=1,\dots,g$ the normalized
  Harder--Narasimhan slope  $w_i(C)$ is constant for all
      Teichm\"uller curves $C$ in $\mathcal{M}$,
      then $w_i(\mathcal{M})=w_i(C)$;}
  \item
  \textit{If for some $i$, $1\le i\le g$, one has
  $w_i(C)\leq a$ for any Teichm\"uller curve $C$
  in $\mathcal{M}$, then $w_i(\mathcal{M})\leq a$.
  }
\end{enumerate}
\end{problem}

We say that a family of Teichm\"uller curves is dense inside the ambient
submanifold $\mathcal{N}$ of the moduli space of Abelian differentials if the closure
of the union of these curves coincides with $\mathcal{N}$. For example,
the arithmetic Teichm\"uller curves form a dense family in any
$\operatorname{GL}(2,\mathbb{R})$-invariant submanifold defined over
$\mathbb{Q}$, in particular, in any connected component of any stratum,
and in any hyperelliptic locus in any stratum.

\begin{condtheorem}
\label{th:condtheorem}
Consider a
$\operatorname{GL}(2,\mathbb{R})$-invariant submanifold $\mathcal{N}$
in the moduli space of Abelian differentials. Suppose that
$\mathcal{N}$ contains a dense family of Teichm\"uller curves and
that the normalized Harder--Narasimhan slopes $w_i(C)$ of all but at
most finite number of Teichm\"uller curves $C\subset\mathcal{N}$ in
this family satisfy the following inequality:
$$
\sum_{i=1}^k w_i(C) \ge a
$$
for some $k\le g$. The Main Conjecture implies that the Lyapunov
exponents $\lambda_i(\mathcal{N})$ of the Hodge bundle over the Teichm\"uller
geodesic flow on $\mathcal{N}$ satisfy the inequality
$$
\sum_{i=1}^k \lambda_i(\mathcal{N}) \ge a\,.
$$

\end{condtheorem}
\begin{proof}
The Main Conjecture implies that for any Teichm\"uller curve $C$
for which we have
$$
\sum_{i=1}^k w_i(C) \ge a
$$
we also have
$$
\sum_{i=1}^k \lambda_i(C) \ge \sum_{i=1}^k w_i(C) \ge a\,.
$$
The statement of the Theorem now follows from Theorem~\ref{EBW}.
\end{proof}

\begin{proof}[Proof of Corollary~\ref{cor:lambda:k:tends:to:1}]
By Theorem~\ref{YZ12a}, all normalized
Harder--Narasimhan slopes ${w_i(C)}$ are constant for all
Teichm\"uller curves  in the hyperelliptic connected components
$\mathcal{H}_g^{hyp}(2g-2)$ and  $\mathcal{H}_g^{hyp}(g-1,g-1)$, and the
corresponding Harder--Narasimhan types $w(C)$ are given by the
following formula:
$$
w(C) =
\begin{cases}
\Big(\frac{2g-1}{2g-1},\frac{2g-3}{2g-1},\dots,\frac{5}{2g-1},\frac{3}{2g-1},\frac{1}{2g-1}\Big)& \text{ for }C\subset \mathcal{H}_g^{hyp}(2g-2)\\
\Big(\frac{2g}{2g},\frac{2g-2}{2g},\dots,\frac{6}{2g},\frac{4}{2g},\frac{2}{2g}\Big)&  \text{ for }C\subset \mathcal{H}_g^{hyp}(g-1,g-1)\,.
\end{cases}
$$

The above expressions imply that for any fixed
$k\in\mathbb{N}$ one has $w_k(C)\to 1$ as $g\to+\infty$ and that
$$
\sum_{i=1}^k w_i(C) \ge a(g)\,,
$$
where $a(g)\to k$ as $g\to+\infty$. By
Conditional Theorem~\ref{th:condtheorem} the sum
$\lambda_1+\dots+\lambda_k$ of the top $k$
Lyapunov exponents of the hyperelliptic components of the strata
has the same asymptotic lower bound.

On the other hand, we have $1=\lambda_1\ge\lambda_i$
(actually, for $i>1$ the inequality is strict,
see~\cite{Zo96} and~\cite{Fo02}), so for any fixed $k>1$ we get
$$
k>\sum_{i=1}^k \lambda_i \ge a(g)\to k \text{ as }g\to+\infty\,,
$$
Taking into consideration that $1=\lambda_1>\lambda_2\ge\dots\ge\lambda_k$
this implies the statement of Corollary~\ref{cor:lambda:k:tends:to:1}.
\end{proof}

We complete Section~\ref{ss:lambda:ge:w} with two
more conditional statements in the spirit of
Corollary~\ref{cor:lambda:k:tends:to:1}. Though both results were
already proved by alternative (and quite involved) methods, they
serve as nice illustrations of potential further applications of the
the methods developed in this paper: the results immediately follow
from combination of the Main Conjecture and of Theorem~\ref{EBW}.

\begin{condtheorem}
\label{metEBW}
The assumption that the second Lyapunov exponent $\lambda_2$ of the
principal stratum $\mathcal{H}_3(1,1,1,1)$ of Abelian differentials in
genus $3$ is strictly greater than $\frac{1}{2}$ implies that algebraically
primitive Teichm\"uller curves are not dense in this stratum.
\end{condtheorem}
(Actually, the paper~\cite{BHM} contains an
alternative unconditional proof that there is at most a finite number
of algebraically primitive Teichm\"uller curves in this stratum.)
\begin{proof}
It is proved in~\cite{YZ12b} that
for any algebraically primitive Teichm\"uller curve $C$,
the non strict inequalities in the Main Conjecture become
equalities, namely, the following system of equalities is valid:
$$
w_i(C)=\lambda_i(C)\quad \text{ for }i=1,\dots,g\,.
$$
An explicit calculation (see Table~\ref{tab:genus:3})
shows that $w_2(C)\le\frac{1}{2}$
for any Teichm\"uller curve $C$ in the stratum
$\mathcal{H}_3(1,1,1,1)$. Thus, for any algebraically primitive
Teichm\"uller curve $C$
in $\mathcal{H}_3(1,1,1,1)$ we have
$$
\lambda_2(C)=w_2(C)\le\frac{1}{2}\,.
$$
Thus, by Theorem~\ref{EBW}, for any
$\operatorname{GL}(2,\mathbb{R})$-sumanifold
$\mathcal{N}\subseteq\mathcal{H}_3(1,1,1,1)$ in which the
algebraically-primitive Teichm\"uller curves are dense, we also have
$\lambda_2(\mathcal{N})\le\frac{1}{2}$, which implies that statement of the Theorem.
(Note that the experimental approximate value of the second Lyapunov exponent
$\lambda_2$ of the principal stratum in genus $3$ is
$\lambda_2(\mathcal{H}_3(1,1,1,1))\approx 0.5517$.)
\end{proof}

Clearly, the same method applies to any
stratum of Abelian differentials and to any
$\operatorname{GL}(2,\mathbb{R})$-sumanifold
$\mathcal{N}$ in it
for which one can find lower bounds for
some Lyapunov exponent $\lambda_i(\mathcal{N})$
exceeding the values of $w_i(C)$ for algebraically-primitive Teichm\"uller curves
in $\mathcal{N}$.

\begin{condtheorem}
Main Conjecture implies that the spectrum of Lyapunov
exponents of the Hodge bundle under the Teichm\"uller geodesic flow
on any Teichm\"uller curve
in the hyperelliptic component $\mathcal{H}_3^{hyp}(4)$,
and on the component $\mathcal{H}_3^{hyp}(4)$ itself, is simple,
$\lambda_1>\lambda_2>\lambda_3$.
\end{condtheorem}
\begin{proof}
The normalized Harder--Narasimhan slopes of all Teichm\"uller curves
in the hyperelliptic connected component $\mathcal{H}_3^{hyp}(4)$ are constant and
have values
$$w_1=1,w_2=3/5,w_3=1/5\,.$$
Thus, the Main Conjecture
implies that
$$\lambda_2\geq 3/5=w_2>w_3=1/5\geq\lambda_3\,,$$
and the Lyapunov spectrum of any Teichm\"uller curve in this component
is simple. Applying Conditional Theorem~\ref{th:condtheorem} we obtain the same inequalities
and, hence, the same conclusion for the entire component $\mathcal{H}_3^{hyp}(4)$.
\end{proof}

Note that simplicity of the spectrum of Lyapunov
exponents for any connected component of any stratum of Abelian
differentials was proved in~\cite{AV07}.

\subsection{Higgs fields}
\label{ss:Higgs:fields}
For the weight one $\mathbb{Q}$-VHS $(\mathbb{V},H^{1,0}=f_*\omega_{S/C}\subset H= (\mathbb{V}\otimes_{\mathbb{Q}}\mathcal{O}_{(C\backslash\Delta)})_{ext})$ which comes from the semi-stable family of curves $f\colon S\rightarrow C$.
The connection $\bigtriangledown$ composed with the inclusion and projection give a map
$$\theta^{1,0}\colon H^{1,0}\rightarrow H \rightarrow  H\otimes \Omega_C(\mathrm{log} \Delta) \rightarrow (H/H^{1,0})\otimes\Omega_C(\mathrm{log} \Delta),$$
which is $\mathcal{O}_C$-linear. If we extend $\theta^{1,0}$ by zero mapping to the associated graded sheaf we get a Higgs bunddle $(\mathrm{gr}(H),\theta)=(H^{1,0}\oplus H^{0,1},\theta^{1,0}\oplus 0)$. By definition this is a vector bundle on $C$ with a holomorphic map $\theta: F\rightarrow F\otimes \Omega_C(\mathrm{log} \Delta)$, the additional $\theta\wedge\theta$ being void if the base is a curve. (\cite{VZ04},\cite{Mo06})

Sub-Higgs bundles of a Higgs bundle $(F,\theta)$ are subbundles $G\subset F$, such that $\theta(G)\subset G$. The Higgs bundles is stable if for any sub-Higgs bundle $(G,\theta|_G)$
$$\frac{\mathrm{deg}(G)}{\mathrm{rk}(G)}<\frac{\mathrm{deg}(F)}{\mathrm{rk}(F)}.$$
Semi-stable is defined similarly but we allow now the weak inequality $\mathrm{deg}(G)/\mathrm{rk}(G)\leq \mathrm{deg}(F)/\mathrm{rk}(F).$

A Higgs bundle $(F,\theta)$ is polystable if
$$(F,\theta)=\bigoplus_i (F_i,\theta_i) \textit{ where } (F_i,\theta_i) \textit{ are stable Higgs bundles}.$$

Simpson shows that every stable Higgs bundle $(F,\theta)$ has a Hermitian-Yang-Mills metric. If $c_1(F)=0$, $\theta\wedge\theta=0$ and $c_2(F)[\omega]^{n-2}=0$ then the connection is flat. the last two condition is void if the base is a curve. Simpson also shows that for the complex variation of the Hodge structure $H$, $(\mathrm{gr}(H),\theta)$ is a polystable Higgs bundle such that each direct summand is of degree $0$. (\cite{Si88})

Simpson's correspondence allows us to switch back and forth between degree $0$ stable sub-Higgs bundles of $F$ and sub-local systems of $\mathbb{V}$.

The Higgs filed $(\mathrm{gr}(H),\theta)$ is the edge morphism
$$H^{1,0}=f_*\omega_{S/C}\rightarrow R^1f_*\mathcal{O}_S\otimes \Omega^1_C(\mathrm{log} \Delta)=H^{0,1}\otimes \Omega_C(\mathrm{log} \Delta)$$
of the tautological sequence
$$0\rightarrow f^*\Omega^1_C(\mathrm{log} \Delta)\rightarrow\Omega^1_S(\mathrm{log}(f^{-1}\Delta)) \rightarrow \Omega^1_{S/C}(\mathrm{log}(f^{-1}\Delta)) \rightarrow 0$$

By combining with some well know results of Higgs bundles (\cite{Ko87},\cite{VZ04}), we have the following property which says the non-uniformly
hyperbolic in dynamical systems \cite{Fo02} implies the positivity in algebraic geometry:
\begin{proposition}\label{zero}For any Teichm\"uller curve, we have:
$$\lambda_i>0\Rightarrow w_i>0.$$
If $w_k\neq 0,w_{k+1}=...=w_g=0$, then VHS contains a sub-local system of $\mathbb{V}$ of rank $2(n-k)$.
\end{proposition}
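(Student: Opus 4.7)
The plan is to extract a sub-VHS from the vanishing Harder--Narasimhan slopes via the Higgs bundle formalism of Section 5.2, and then invoke the Kontsevich--Forni formula (Theorem \ref{sumly}). Since $w_i = \mu_i(f_*\omega_{S/C})/(\chi/2)$, the hypothesis $w_k \neq 0$ and $w_{k+1}=\cdots=w_g=0$ is equivalent to saying that the bottom graded piece $\mathrm{gr}_r^{HN}$ of the Harder--Narasimhan filtration of $H^{1,0}=f_*\omega_{S/C}$ is semistable of slope $0$ and rank $g-k$; equivalently, the quotient $Q := H^{1,0}/HN_{r-1}$ is semistable of degree $0$ and rank $g-k$.

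First I construct a degree-$0$ sub-Higgs bundle inside $(\mathrm{gr}(H),\theta)=(H^{1,0}\oplus H^{0,1},\theta^{1,0})$. Grothendieck--Serre duality gives $H^{0,1}=R^1f_*\mathcal{O}_S\simeq(f_*\omega_{S/C})^\vee$, so dualizing the surjection $H^{1,0}\twoheadrightarrow Q$ produces an inclusion $Q^\vee\hookrightarrow H^{0,1}$ of a subbundle of degree $0$ and rank $g-k$. Since $\theta^{1,0}$ vanishes on the $H^{0,1}$-component by construction, the pair $(Q^\vee,0)$ is tautologically a sub-Higgs bundle. To promote this to a sub-VHS I use the real structure on $\mathbb{V}$: complex conjugation exchanges $H^{1,0}$ and $H^{0,1}$, so $\overline{Q^\vee}\subset H^{1,0}$ is a conjugate degree-$0$ subbundle, and $\mathcal{W}:=\overline{Q^\vee}\oplus Q^\vee$ equipped with the restricted Higgs field is a conjugation-invariant, degree-$0$ sub-Higgs bundle of rank $2(g-k)$. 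By Simpson's theorem the ambient Higgs bundle $(\mathrm{gr}(H),\theta)$ is polystable of degree $0$, so $\mathcal{W}$ splits off as a direct summand corresponding to a sub-local system; conjugation invariance then descends this to an $\mathbb{R}$-sub-VHS of $\mathbb{V}$, which generates the desired $\mathbb{Q}$-sub-VHS $\mathbb{W}\subset\mathbb{V}$ of rank $2(g-k)$ with $\mathbb{W}^{(1,0)}=\overline{Q^\vee}$ of degree $0$. This proves the second assertion.

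Applying Theorem \ref{sumly} to $\mathbb{W}$ now yields
$$\sum_{j=1}^{g-k}\lambda^{\mathbb{W}}_j=\frac{2\deg\mathbb{W}^{(1,0)}}{2g(C)-2+|\Delta|}=0,$$
and non-negativity of Lyapunov exponents forces each $\lambda^{\mathbb{W}}_j=0$. These $g-k$ vanishing exponents must be the smallest non-negative exponents of $\mathbb{V}$, because on the complementary sub-VHS the $(1,0)$-part carries exactly the strictly positive Harder--Narasimhan slopes $w_1,\ldots,w_k$ and hence has strictly positive total degree; by Theorem \ref{sumly} applied to the complement its $k$ Lyapunov exponents sum to a positive number. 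Thus $\lambda_{k+1}=\cdots=\lambda_g=0$, which in particular gives $w_i=0\Rightarrow\lambda_i=0$.

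The main obstacle is the descent step: certifying that the conjugation-invariant sub-Higgs bundle $\mathcal{W}$ corresponds to a genuine $\mathbb{Q}$-sub-VHS of $\mathbb{V}$ rather than merely a $\mathbb{C}$-sub-local system of $\mathbb{V}\otimes\mathbb{C}$. This requires Simpson's correspondence between degree-$0$ polystable Higgs summands and sub-local systems, together with the standard fact that a real-structure-invariant sub-local system of a polarized $\mathbb{Q}$-VHS is automatically defined over $\mathbb{Q}$ (after replacing it by its $\mathbb{Q}$-span if necessary). A secondary point to verify is the identification of the $\mathbb{W}$-exponents with the bottom $g-k$ exponents of $\mathbb{V}$, which is forced by the slope-$0$ nature of $\mathbb{W}^{(1,0)}$ versus the strict positivity of the slopes $w_1,\ldots,w_k$ on the quotient.
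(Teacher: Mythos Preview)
Your overall strategy matches the paper's: construct a degree-zero sub-Higgs bundle inside $(\mathrm{gr}(H),\theta)$, use polystability and Simpson's correspondence to extract a sub-local system, then apply Theorem~\ref{sumly}. However, the order in which you carry out the steps introduces a genuine gap.

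The problematic move is forming $\mathcal{W}=\overline{Q^\vee}\oplus Q^\vee$ as a sub-Higgs bundle \emph{before} invoking polystability. Complex conjugation with respect to the real structure on $\mathbb{V}$ is an anti-holomorphic operation on $\mathcal{H}$: it carries the holomorphic subbundle $Q^\vee\subset H^{0,1}$ to a $C^\infty$ subbundle of $H^{1,0}$ which is a priori only anti-holomorphic. Thus $\mathcal{W}$ is not a holomorphic sub-Higgs bundle, and the polystability splitting argument does not apply to it. Even granting holomorphicity, you would still need $\theta^{1,0}(\overline{Q^\vee})\subset Q^\vee\otimes\Omega_C(\log\Delta)$ for $\mathcal{W}$ to be $\theta$-invariant, and nothing in your construction guarantees this. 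The ``main obstacle'' you flag (descent from $\mathbb{C}$ to $\mathbb{Q}$) is downstream; the real difficulty occurs earlier.

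The paper avoids this by reversing the order. It first applies polystability directly to the manifestly holomorphic sub-Higgs bundle $(0\oplus Q^\vee,0)$, which has degree zero and therefore splits off as a Higgs direct summand; in particular $Q^\vee$ is a holomorphic direct summand of $H^{0,1}$. \emph{Dualizing} (a holomorphic operation, via $H^{0,1}\cong(H^{1,0})^\vee$) then exhibits $Q=HN_{min}(f_*\omega_{S/C})$ as a holomorphic direct summand of $H^{1,0}$. Only after this does the paper assemble $(Q\oplus Q^\vee,0)$ and cite \cite{VZ04} to identify it with a rank-$2(g-k)$ sub-local system. Your conjugation idea can in fact be salvaged if you perform it at the local-system level \emph{after} Simpson's correspondence has produced a rank-$(g-k)$ complex sub-local system $\mathbb{U}$ from $(0\oplus Q^\vee,0)$: then $\overline{\mathbb{U}}$ is a genuine sub-local system (since $\mathbb{V}$ is real), has Hodge type purely $(1,0)$, and $\mathbb{U}\oplus\overline{\mathbb{U}}$ is the desired rank-$2(g-k)$ object.
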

\begin{proof}
For any stable quotient bundle
$$H^{1,0}=f_*\omega_{S/C}\rightarrow V\rightarrow 0,$$
by dualization we have
$$0\rightarrow V^{\vee}\rightarrow f_*\omega_{S/C}^{\vee}=H^{0,1}.$$
Then we construct a stable sub-Higgs bundle:
$$(0\oplus V^{\vee},0)\subset (H^{1,0}\oplus H^{0,1},\theta^{1,0}\oplus 0).$$
Since $(\mathrm{gr}(H),\theta)$ is a polystable Higgs bundle, then this means that
$$-\mathrm{deg}(V)=\mathrm{deg}(0\oplus V^{\vee})\leq \mathrm{deg}(H^{1,0}\oplus H^{0,1})=0.$$
Denote by $HN_{min}(W)$ the last quotient in the Harder--Narasimhan filtration of a vector bundle $W$. Let $\mu_{min}(W)$ be the slope $\mu(HN_{min}(W))$. Then
$$\mu_{min}(f_*\omega_{S/C})\geq 0.$$
Because any quotient bundle $f_*\omega_{S/C}\overset{\varphi}{\rightarrow} Q\rightarrow 0$ induces a quotient bundle $f_*\omega_{S/C}\overset{\phi}{\rightarrow} HN_{min}(Q)\rightarrow 0$,
we have $\mu_{min}(Q)\geq \mu_{min}(f_*\omega_{S/C})$ (otherwise the map $\phi$ is zero).
We then obtain
 $$\mathrm{deg}(Q)\geq \mu_{min}(Q)\cdot \mathrm{rk}(Q)\geq \mu_{min}(f_*\omega_{S/C})\cdot \mathrm{rk}(Q)\geq 0.$$

If $w_k\neq 0$ and $w_{k+1}=...=w_g=0$, then $\mu_{min}(f_*\omega_{S/C})=0$. Since $(\mathrm{gr}(H),\theta)$ is a polystable Higgs bundle, then this means that
$$(0\oplus HN_{min}^{\vee}(f_*\omega_{S/C}),0)\textit{ is a direct summand of }(\mathrm{gr}(H),\theta).$$
So $HN_{min}^{\vee}(f_*\omega_{S/C})$ is a direct summand of $H^{0,1}$ and $HN_{min}(f_*\omega_{S/C})$ is a direct summand of $f_*\omega_{S/C}$. Furthermore \cite[p.1]{VZ04}
$$(HN_{min}(f_*\omega_{S/C})\oplus HN_{min}^{\vee}(f_*\omega_{S/C}),0)$$
is a polystable Higgs bundle such that each direct summand has degree $0$, so it comes from a sub-local system of $\mathbb{V}$ of rank $2(n-k)$.

By Theorem \ref{sumly}, we have:
$$0\leq\lambda_{k+1}+...+\lambda_{g}\leq2\mathrm{deg}(HN_{min}(f_*\omega_{S/C}))/(2g(C)-2+|\Delta|)=0$$
$$\Longrightarrow \lambda_{k+1}=...=\lambda_g=0.$$
\end{proof}
The reader can compare this Proposition with \cite[Theorem 3]{FMZ11}. It can be used to get some information of the zero eigenvalues of $EV(H_{\omega})$ and $EV(B_{\omega})$.

\subsection{Quadratic differentials}
For a Teichm\"{u}ller curve $C$ generated by $(Y,q)$ in
$\mathcal{Q}(d_1,...,d_s)$, let $(X,\omega)$ be the canonical double
covering. The curve $X$ comes with an involution $\tau$. Its
cohomology splits into the $\tau$-invariant and
$\tau$-anti-invariant part. Adapting the notation of \cite{EKZ11} we
let $g=g(Y)$ and $g_{eff}=g(X)-g$. Let $\lambda^+_i$ be the Lyapunov
exponents of the $\tau$-invariant part of $H^1(X,\mathbb{R})$ and
$\lambda^-_i$ be the Lyapunov exponents of the $\tau$-anti-invariant
part. The $\tau$-invariant part descends to $Y$ and hence the
$\lambda^+_i$ are the Lyapunov exponents of $(Y,q)$ we are primarily
interested in. Define
$$L^+=\lambda^+_1+...+\lambda^+_g$$
$$L^-=\lambda^-_1+...+\lambda^-_{eff}$$
The role of $\lambda^+_i$ is analogous to the ordinary sum of
Lyapunov exponents in the case of abelian differentials. We will
reprove the following formula in \cite[p.12]{KZ97}\cite[p.12]{EKZ11} by using double cover techniques \cite[p.236]{BHPV03}.
It is largely based on the work of Chen-M\"oller \cite{CM12}.
\begin{proposition}\label{quad}For a Teichm\"uller curve $C$  in
$\mathcal{Q}(d_1,...,d_s)$, we have
$$L^--L^+=\frac{1}{4}\sum_{\substack{
  j \mathrm{\, such \,that}\\
   d_j \mathrm{\,is \,odd}
  }} \frac{1}{d_j+2}.$$
\end{proposition}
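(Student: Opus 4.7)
The plan is to reduce the equality to a degree computation via Theorem \ref{sumly}, and then carry out that degree computation by combining standard double-cover theory with the canonical bundle formula for the underlying quadratic differential. Write $f_*\omega_{S/C} = E^+ \oplus E^-$ for the decomposition into $\tau$-invariant and $\tau$-anti-invariant sub-bundles; these are sub-VHS of ranks $g$ and $g_{\mathrm{eff}}$ respectively. By Theorem \ref{sumly}, $L^{\pm} = 2\deg E^{\pm}/\chi$, so the proposition is equivalent to showing
\[
\deg E^- - \deg E^+ \;=\; \frac{\chi}{8}\sum_{d_j\ \mathrm{odd}}\frac{1}{d_j+2}.
\]

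Next, I introduce the family $g\colon T\to C$ whose fibres are the quadratic differential curves $Y$, together with the relative double cover $\pi\colon S\to T$ over $C$, branched precisely along the sections $Z_j$ corresponding to the zeros of odd order. Standard double-cover theory \cite[p.~236]{BHPV03} produces a line bundle $\mathcal{N}$ on $T$ with $\mathcal{N}^{\otimes 2}\simeq \mathcal{O}_T\bigl(\sum_{d_j\ \mathrm{odd}} Z_j\bigr)$ and a splitting $\pi_*\omega_{S/C}\simeq \omega_{T/C}\oplus(\omega_{T/C}\otimes \mathcal{N})$ into $\tau$-eigenspaces. Pushing forward by $g$ identifies $E^+ = g_*\omega_{T/C}$ and $E^- = g_*(\omega_{T/C}\otimes \mathcal{N})$.

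Now apply Grothendieck--Riemann--Roch to $g$. Set $a := c_1(\omega_{T/C})$ and $b := c_1(\mathcal{N})$. Since $R^1 g_*\omega_{T/C}\simeq \mathcal{O}_C$ contributes degree zero and $R^1 g_*(\omega_{T/C}\otimes \mathcal{N}) = 0$ by relative Serre duality (whenever there is at least one odd $d_j$), the Todd-class calculation collapses to
\[
\deg E^- - \deg E^+ \;=\; \tfrac{1}{2}\,b\,(a+b),
\]
with the $\tfrac{1}{12}\,\omega_{T/C}^2$ and nodal contributions of Mumford's formula cancelling between $E^+$ and $E^-$ because $\pi$ is \'etale at the nodes of the singular fibres.

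It then remains to evaluate $a\cdot b$ and $b^2$ as intersection numbers on $T$. From the canonical bundle formula for the quadratic differential $\omega_{T/C}^{\otimes 2}\simeq g^*\mathcal{L}^{\otimes 2}\otimes \mathcal{O}_T\bigl(\sum d_j Z_j\bigr)$, together with adjunction applied to each section $Z_j\cong C$ (exactly as for the $D_i$ in Section~2), one obtains $\omega_{T/C}\cdot Z_j = \chi/(d_j+2)$ and $Z_j^2 = -\chi/(d_j+2)$. Since $2b = \sum_{d_j\ \mathrm{odd}} Z_j$ and the sections are pairwise disjoint, this gives $ab = (\chi/2)\sum 1/(d_j+2)$ and $b^2 = -(\chi/4)\sum 1/(d_j+2)$, hence $\tfrac{1}{2}b(a+b) = (\chi/8)\sum 1/(d_j+2)$, and the formula for $L^- - L^+$ follows. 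The main delicacy is bookkeeping at the boundary $\Delta\subset C$: one must verify that the double-cover decomposition extends correctly across the semistable compactification so that the nodal corrections to Mumford's formula really do coincide for $E^+$ and $E^-$ and therefore drop out of the difference. This is essentially the content of Chen--M\"oller's analysis \cite{CM12} and reduces to tracking the sections $Z_j$ through the stable reduction.
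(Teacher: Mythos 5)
Your proposal is correct and follows essentially the same route as the paper's proof: the same eigenspace splitting $\pi_*\omega_{S/C}\simeq \omega_{T/C}\oplus(\omega_{T/C}\otimes\mathcal{N})$ (the paper's $f'_*\omega_{S'/C}=f_*\omega_{S/C}\oplus f_*(\omega_{S/C}+D)$ with $2D=\sum_{d_j\,\mathrm{odd}}D_j$), the same Grothendieck--Riemann--Roch reduction of $\deg E^--\deg E^+$ to $\tfrac{1}{2}D(D+\gamma)$ with the common Todd/nodal terms dropping out, and the same intersection numbers $Z_j^2=-\chi/(d_j+2)$, with the identification $L^{\pm}=2\deg E^{\pm}/\chi$ via Theorem \ref{sumly}. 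The only cosmetic deviation is that you compute $Z_j^2$ directly downstairs by adjunction from the quadratic canonical bundle formula, whereas the paper pushes down the self-intersections of the sections $D'_j$, $D_{j,1}$, $D_{j,2}$ from the abelian-differential family; otherwise the two arguments coincide, including the appeal to Chen--M\"oller \cite{CM12} for the bookkeeping at the boundary.
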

\begin{proof}Note that $$\phi\colon \mathcal{Q}(...,d_i,...,d_j,...)\rightarrow \mathcal{H}_g(...,d_i/2,d_i/2,...,d_j+1,...)$$ for $d_i$ even and for $d_j$ odd. Since the double cover is branched at the singularities of odd order. Restrict this to a Teichm\"uller curve $C$ in $\mathcal{Q}(d_1,...,d_n)$. Then it gives rise to a Teichm\"uller curve isomorphic to $C$ in the corresponding stratum of abelian differentials. After suitable base change and compactification,
we can get two universal families $f', f$ and we have the following commutative diagram

$$\xymatrix{
    S' \ar[rr]^{\sigma} \ar[dr]_{f'}
                &  &    S \ar[dl]^{f}    \\
                & C                }
$$
and let $D'_j$ be the section of $f'\colon S'\rightarrow C$ over $D_j$ in case $d_j$ is odd and $D_{j,1},D_{j,2}$ be the sections over $D_j$ in case $d_j$ is even.

In the case when $d_j$ is odd, we have \cite[p.14]{CM12}
$$\sigma_*D'_j=D_j,\sigma^*D_j=2D'_j,$$
and the self-intersection number is
$$D^2_j=(\sigma_*D'_j)D_j=2(D'^2_j)=-\frac{\chi}{d_j+2}.$$
In the case when $d_j$ is even, we have
$$\sigma_*(D_{j,1}+D_{j,2})=2D_j,\sigma^*D_j=D_{j,1}+D_{j,2},$$
and the self-intersection number is
$$D^2_j=\frac{1}{2}(\sigma_*(D_{j,1}+D_{j,2}))D_j=\frac{1}{2}(D^2_{j,1}+D^2_{j,2})=-\frac{\chi}{d_j+2}.$$

The relative canonical bundle formula for the fibration $f':S'\rightarrow C$ is
$$\omega_{S'/C}=f'^*\mathcal{L}\otimes\mathcal{O}_{S'}\big(\sum_{\substack{
  j \mathrm{\, such \,that}\\
   d_j \mathrm{\,is \,even}
  }} \frac{d_j}{2}(D_{j,1}+D_{j,2})+\sum_{\substack{
  j \mathrm{\, such \,that}\\
   d_j \mathrm{\,is \,odd}
  }}(d_j+1)D'_j\big),$$
where $\mathcal{L}$ is the line bundle on $C$ corresponding to the generating abelian differential
and $\mathrm{deg}\mathcal{L}$ equals $\chi/2$.

The relative canonical bundle formula for the fibration $f\colon S\rightarrow C$ is
$$\omega^2_{S/C}=f^*\mathcal{F}\otimes\mathcal{O}_{S}(\sum d_jD_j),$$
where $\mathcal{F}$ is the line bundle on $C$ corresponding to the generating quadratic differential
and obviously $\mathcal{F}$ equals $\mathcal{L}^2$. (\cite[p.42]{CM12})

There is a smooth divisor $$B=\sum_{\substack{
  j \mathrm{\, such \,that}\\
   d_j \mathrm{\,is \,odd}
  }}   D_j$$
  on $S$, such that $B=2D$ for some effective divisor $D$. The double covering $\sigma\colon S' \rightarrow S$ is ramified exactly over $B$. We have
$$\omega_{S'}=\sigma^*(\omega_{S}\otimes \mathcal{O}_{S}(D))$$
and
$$\sigma_*(\mathcal{O}_{S'})=\mathcal{O}_{S}\oplus \mathcal{O}_{S}(-D).$$
The direct image of relative canonical bundle $f'_*\omega_{S'/C}$ decomposes into a direct sum
\begin{align*}
 f'_*\omega_{S'/C} &=f'_*\omega_{S'}\otimes \omega^{-1}_C=f_*\sigma_*\omega_{S'}\otimes \omega^{-1}_C  \\
 &=f_*(\sigma_*(\mathcal{O}_{S'})\otimes\omega_{S}\otimes \mathcal{O}_{S}(D))\otimes \omega^{-1}_C \\
 & =f_*(\omega_{S}\otimes \mathcal{O}_{S}(D)\oplus \omega_{S})\otimes \omega^{-1}_C\\
 &=f_*(\omega_{S/C})\oplus f_*(\omega_{S/C}+D).
\end{align*}

We assume $D\neq 0$ (when $D=0$, the result is trivial.). Then all the higher direct images of $\omega_{S/C}+D$ are zero. By Grothendieck-Riemann-Roch we have
\begin{align*}
ch(f_*(\omega_{S/C}+D)) &=f_*(ch(\omega_{S/C})\cdot ch(D)\cdot(1-\frac{\gamma}{2}+\frac{\gamma^2+\eta}{12}))\\
 &=f_*((1+\gamma+\frac{\gamma^2}{2})\cdot(1+D+\frac{D^2}{2})\cdot(1-\frac{\gamma}{2}+\frac{\gamma^2+\eta}{12})) \\
 &=f_*(1+(\frac{\gamma}{2}+D)+\frac{D(D+\gamma)}{2}+\frac{\gamma^2+\eta}{12}) \\
 &=\mathrm{rank}+(f_*\frac{D(D+\gamma)}{2}+\lambda),
\end{align*}
where $\gamma=c_1(\omega_{S/C})$, $\lambda=c_1(f_*\omega_{S/C})$ and $\eta$ the nodal locus in $f\colon S\rightarrow C$, they satisfy $\lambda=\frac{\gamma^2+\eta}{12}$ by Riemann-
Roch. Now we have
$$c_1(f_*(\omega_{S/C}+D))-c_1(f_*(\omega_{S/C}))=c_1(f_*(\omega_{S/C}+D))-\lambda=f_*\frac{D(D+\gamma)}{2}.$$
Because
\begin{align*}
\mathrm{deg}(f_*(\omega_{S/C}+D))-\mathrm{deg}(f_*(\omega_{S/C})) &=\frac{1}{2}D(\omega_{S/C}+D)\\
 &=\frac{1}{8}(\sum_{\substack{
  j \mathrm{\, such \,that}\\
   d_j \mathrm{\,is \,odd}
  }} D_j)(\sum_{\substack{
  j \mathrm{\, such \,that}\\
   d_j \mathrm{\,is \,odd}
  }} D_j+\sum d_jD_j+\mathcal{F}) \\
 &=\frac{1}{8}(\sum_{\substack{
  j \mathrm{\, such \,that}\\
   d_j \mathrm{\,is \,odd}
  }}(d_j+1) D^2_j+\chi) \\
 &=\frac{\chi}{2}\frac{1}{4}\sum_{\substack{
  j \mathrm{\, such \,that}\\
   d_j \mathrm{\,is \,odd}
  }}\frac{1}{d_j+2},
\end{align*}
and $f_*(\omega_{S/C})$ is $\sigma$-invariant part and $f_*(\omega_{S/C}+D)$  is $\sigma$-anti-invariant part. Hence we have
$$L^-=\mathrm{deg}(f_*(\omega_{S/C}+D)))/\frac{\chi}{2}, L^+=\mathrm{deg}(f_*(\omega_{S/C}))/\frac{\chi}{2}.$$
We get the formula
$$L^--L^+=\frac{1}{4}\sum_{\substack{
  j \mathrm{\, such \,that}\\
   d_j \mathrm{\,is \,odd}
  }}\frac{1}{d_i+2}.$$
\end{proof}

\begin{remark}
A version of Theorem~\ref{EBW} allows to
generalize the Proposition from
Teichm\"uller curves to
any $\operatorname{GL}^+(2,\mathbb{R})$-invariant submanifold in the
moduli space of meromorphic quadratic differentials with at most simple poles
defined over $\mathbb{Q}$, in particular, to every connected components of
each stratum.
\end{remark}

For the Teichm\"uller curve $C$ in $\overline{\mathcal{H}_g}(...,d_i/2,d_i/2,...,d_j+1,...)$, we have defined $g+g_{eff}$-vector
$$w(C)=(w_1,...,w_{g+g_{eff}}).$$
and moreover we know the upper bound of each $w_i$. Because
$$\mathrm{grad}(HN(f'_*\omega_{S'/C}))=\mathrm{grad}(HN(f_*(\omega_{S/C})))\oplus \mathrm{grad}(HN(f_*(\omega_{S/C}+D))),$$
we can divide $w_i$ into two parts
$$w^+_1,...,w^+_g;w^-_1,...,w^-_{eff}.$$
Here $w^+_1,...,w^+_g$ (resp.$w^-_1,...,w^-_{eff}$) come from the graded quotient $\mathrm{grad}(HN(f_*(\omega_{S/C})))$ (resp.$\mathrm{grad}(HN(f_*(\omega_{S/C}+D))$).

It is obvious by definition
$$L^+=w^+_1+...+w^+_g, L^-=w^-_1+...+w^-_{eff}.$$

\begin{example}
Consider the map $$\phi\colon\mathcal{Q}(1,2,-1,-1,-1)\rightarrow\mathcal{H}_3(2,1,1).$$
For a Teichm\"uller curve in $\mathcal{H}_3(2,1,1)$, we know that
$$w_1=1,w_2=1/2,w_3=1/3.$$
For a Teichm\"uller curve in $\mathcal{Q}(1,2,-1,-1,-1)$, we have
$$L^--L^+=5/6=1/4(1/3+1+1+1),L^-+L^+=w_1+w_2+w_3=11/6.$$
We get $L^-=4/3,L^+=1/2$, and so give us
$$w^+_1=1/2,w^-_1=1,w^-_2=1/3.$$
\end{example}

Of course we can  ask the same questions for $w^-_i$($w^+_i$) and $\lambda^-_i$($\lambda^+_i$).

\subsection*{Acknowledgements}
The article extends the author's report \cite{Yu14}
at the Oberwolfach conference ``Flat Surfaces and Dynamics on Moduli
Space''. The author thanks Mathematisches Forschungsinstitut at
Oberwolfach for hospitality and
Howard Masur, Martin M\"oller and Anton Zorich for
organization of the conference.

I am grateful to Alex Eskin who helped me to clarify
the proof of Theorem~\ref{th:condtheorem}, to
Chong~Song who stimulated me to read Atiyah--Bott's paper and to
Zhi~Hu for numerous discussions. I
would also like to thank Dawei~Chen, Yifei~Chen, Yitwah~Cheung,
Simion~Filip, Charles~Fougeron, Ronggang~Shi, Alex~Wright, Ze~Xu and
Lei~Zhang.

\section{Appendix}
We present here approximate numerical values on the individual
Lyapunov exponents $\lambda_i$ for all strata in genera $3$ and $4$
and of some strata in genus $5$. These values were computed experimentally
in~\cite{KZ97} and in~\cite{EKZ11}. The rational number representing
the sum $\overset{g}{\underset{j=1}{\sum}} \lambda_j$ of the positive
Lyapunov exponents in the right column of each table is exact; it is
computed rigorously in~\cite{EKZ11}. (Note that the sum contains as a
summand the top Lyapunov exponent $\lambda_1=1$ which is not present
in the table.)

We present also numerical data for the normalized slopes $w_i$ of the
Harder--Narasimhan filtration of the Hodge bundle over Teichm\"uller
curves in the corresponding strata computed rigorously
in~\cite{YZ12a} and in~\cite{YZ12b}) and reproduced in
Theorems~\ref{YZ12a} and~\ref{YZ12b} in the current paper.

When the numbers $w_i$ might vary from one
Teichm\"uller curve to another in the given stratum we provide
bounds~\eqref{eq:bounds:for:w} in the form of inequalities. When the
inequality sign is missing, it means that the Harder--Narasimhan type
$w(C)$ of any Teichm\"uller curve is constant for the stratum.

These data provides numerical evidence supporting the Main
Conjecture. We hope that it would be also useful for applications (like
evaluation of diffusion rates of some periodic polygonal billiards in
the plane).


\begin{table}[hbt]
\small
$$
\begin{array}{|c|c||c|c||c|c||c|}

\hline
&&\multicolumn{2}{|c||}{}&
\multicolumn{2}{|c||}{}&
\\
\multicolumn{1}{|c|}{\text{Degrees}}&
\multicolumn{1}{|c||}{\text{Con-}}&
\multicolumn{2}{|c||}{\text{Lyapunov}}&
\multicolumn{2}{|c||}{\text{Normalized Harder-}}&
\multicolumn{1}{|c|}{\text{Sum}}
\\
\multicolumn{1}{|c|}{\text{of }}&
\multicolumn{1}{|c||}{\text{nected}}&
\multicolumn{2}{|c||}{\text{exponents}}&
\multicolumn{2}{|c||}{\text{Narasimhan slopes}}&
\multicolumn{1}{|c|}{\overset{g}{\underset{j=1}{\sum}} \lambda_j=}
\\
\cline{3-6}
\multicolumn{1}{|c|}{\text{zeros}}&
\multicolumn{1}{|c||}{\text{comp.}}&
\multicolumn{1}{|c|}{\lambda_2}&
\multicolumn{1}{|c||}{\lambda_3}&
\multicolumn{1}{|c|}{\quad\ w_2\quad\ }&
\multicolumn{1}{|c||}{w_3}&
\multicolumn{1}{|c|}{=\overset{g}{\underset{j=1}{\sum}} w_j}
\\
[-\halfbls] &&&&&&\\
\hline &&&&&& \\
[-\halfbls]
  (4)& hyp & 0.6156& 0.1844 & \frac{3}{5} & \frac{1}{5} & \frac{9}{5}  \\
[-\halfbls] &&&&&&\\
[-\halfbls] &&&&&&\\
\hline &&&&&& \\
[-\halfbls]
  (4) & odd & 0.4179 & 0.1821 & \frac{2}{5} & \frac{1}{5} & \frac{8}{5} \\
[-\halfbls] &&&&&&\\
[-\halfbls] &&&&&&\\
\hline &&&&&& \\
[-\halfbls]
  (3,1) &   & 0.5202 & 0.2298 & \frac{2}{4} & \frac{1}{4} & \frac{7}{4} \\
[-\halfbls] &&&&&&\\
[-\halfbls] &&&&&&\\
\hline &&&&&& \\
[-\halfbls]
  (2,2) & hyp & 0.6883 & 0.3117 & \frac{2}{3} & \frac{1}{3} & 2 \\
[-\halfbls] &&&&&&\\
[-\halfbls] &&&&&&\\
\hline &&&&&& \\
[-\halfbls]
  (2,2) & odd & 0.4218 & 0.2449 & \frac{1}{3} & \frac{1}{3} & \frac{5}{3} \\
[-\halfbls] &&&&&&\\
[-\halfbls] &&&&&&\\
\hline &&&&&& \\
[-\halfbls]
  (2,1,1) &   & 0.5397& 0.2936 & \frac{1}{2} & \frac{1}{3} & \frac{11}{6} \\
[-\halfbls] &&&&&&\\
[-\halfbls] &&&&&&\\
\hline &&&&&& \\
[-\halfbls]
  (1,1,1,1) &   &  0.5517 & 0.3411 & \leq 1/2  & \leq 1/2 & \frac{53}{28} \\
[-\halfbls] &&&&&&\\
\hline
\end{array}
$$
\caption{
\label{tab:genus:3}
All strata in genus $3$}
\end{table}


\begin{table}
\small
$$
\begin{array}{|c|c||c|c|c||c|c|c||c|}

\hline
&&\multicolumn{3}{|c||}{}&
\multicolumn{3}{|c||}{}&
\\
\multicolumn{1}{|c|}{\text{Degrees}}&
\multicolumn{1}{|c||}{\text{Con-}}&
\multicolumn{3}{|c||}{\text{Lyapunov}}&
\multicolumn{3}{|c||}{\text{Normalized Harder-}}&
\multicolumn{1}{|c|}{\text{Sum}}
\\
\multicolumn{1}{|c|}{\text{of }}&
\multicolumn{1}{|c||}{\text{nected}}&
\multicolumn{3}{|c||}{\text{exponents}}&
\multicolumn{3}{|c||}{\text{Narasimhan slopes}}&
\multicolumn{1}{|c|}{\overset{g}{\underset{j=1}{\sum}} \lambda_j=}
\\
\cline{3-8}
\multicolumn{1}{|c|}{\text{zeros}}&
\multicolumn{1}{|c||}{\text{comp.}}&
\multicolumn{1}{|c|}{\lambda_2}&
\multicolumn{1}{|c|}{\lambda_3}&
\multicolumn{1}{|c||}{\lambda_4}&
\multicolumn{1}{|c|}{\,\ w_2\,\ }&
\multicolumn{1}{|c|}{\,\ w_3\,\ }&
\multicolumn{1}{|c||}{w_4}&
\multicolumn{1}{|c|}{=\overset{g}{\underset{j=1}{\sum}} w_j}
\\
[-\halfbls] &&&&&&&&\\
\hline &&&&&&&& \\
[-\halfbls]
(6) & hyp &
0.7375 & 0.4284 & 0.1198&
\frac{5}{7} & \frac{3}{7} & \frac{1}{7} &
\frac{16}{7}
\\
[-\halfbls] &&&&&&&&\\
\hline &&&&&&&& \\ [-\halfbls]
(6) & even &
0.5965 & 0.2924 & 0.1107 &
\frac{4}{7} & \frac{2}{7} & \frac{1}{7} &
\frac{14}{7}
\\
[-\halfbls] &&&&&&&&\\
\hline &&&&&&&& \\ [-\halfbls]
(6) & odd &
0.4733 & 0.2755 & 0.1084 &
\frac{3}{7} & \frac{2}{7} & \frac{1}{7} &
\frac{13}{7}
\\
[-\halfbls] &&&&&&&&\\
\hline &&&&&&&& \\ [-\halfbls]
(5, 1) &  &
0.5459 & 0.3246 & 0.1297 &
\frac{1}{2} & \frac{1}{3} & \frac{1}{6} &
2
\\
[-\halfbls] &&&&&&&&\\
\hline &&&&&&&& \\ [-\halfbls]
(3, 3) & hyp &
0.7726 & 0.5182 & 0.2097 &
\frac{3}{4} & \frac{2}{4} & \frac{1}{4} &
\frac{5}{2}
\\
[-\halfbls] &&&&&&&&\\
\hline &&&&&&&& \\ [-\halfbls]
(3, 3) & nonhyp &
0.5380 & 0.3124 & 0.1500 &
\frac{2}{4} & \frac{1}{4} & \frac{1}{4} &
2
\\
[-\halfbls] &&&&&&&&\\
\hline &&&&&&&& \\ [-\halfbls]
(4, 2) & even &
0.6310 & 0.3496 & 0.1527 &
\frac{3}{5} & \frac{1}{3} & \frac{1}{5} &
\frac{32}{15}
\\
[-\halfbls] &&&&&&&&\\
\hline &&&&&&&& \\ [-\halfbls]
(4, 2) & odd &
0.4789 & 0.3134 & 0.1412&
\frac{2}{5} & \frac{1}{3} & \frac{1}{5} &
\frac{29}{15}
\\
[-\halfbls] &&&&&&&&\\
\hline &&&&&&&& \\ [-\halfbls]
(2,2,2) & odd &
0.4826 & 0.3423 & 0.1749 &
\frac{1}{3} & \frac{1}{3} & \frac{1}{3} &
2
\\
[-\halfbls] &&&&&&&&\\
\hline &&&&&&&& \\ [-\halfbls]
(3,2,1) &  &
0.5558 & 0.3557 & 0.1718 &
\frac{1}{2} & \frac{1}{3} & \frac{1}{4} &
\frac{25}{12}
\\
[-\halfbls] &&&&&&&&\\
\hline &&&&&&&& \\ [-\halfbls]
(2,2,2) & even &
0.6420 & 0.3785 & 0.1928 &
\le\frac{2}{3} & \le\frac{1}{3} & \le\frac{1}{3} &
\frac{737}{333}
\\
[-\halfbls] &&&&&&&&\\
\hline &&&&&&&& \\ [-\halfbls]
(1,1,1,3) &  &
0.5600 & 0.3843 & 0.1849 &
\le\frac{1}{2} & \le\frac{1}{2} & \le\frac{1}{4} &
\frac{66}{31}
\\
[-\halfbls] &&&&&&&&\\
\hline &&&&&&&& \\ [-\halfbls]
(1,1,2,2) &  &
0.5604 & 0.3809 & 0.1982 &
\le\frac{2}{3} & \le\frac{1}{2} & \le\frac{1}{3} &
\frac{5045}{2358}
\\
[-\halfbls] &&&&&&&&\\
\hline &&&&&&&& \\ [-\halfbls]
(1,1,1,1,2) &  &
0.5632 & 0.4032 & 0.2168 &
\le\frac{1}{2} & \le\frac{1}{2} & \le\frac{1}{3} &
\frac{131}{60}
\\
[-\halfbls] &&&&&&&&\\
\hline &&&&&&&& \\ [-\halfbls]
(1,1,1,1,1,1) &  &
0.5652 & 0.4198 & 0.2403 &
\le\frac{1}{2} & \le\frac{1}{2} & \le\frac{1}{2} &
\frac{839}{377}
\\
[-\halfbls] &&&&&&&&\\ \hline
\end{array}
$$
\caption{All strata in genus $4$}
\end{table}

\clearpage   

\begin{table}
\small
$$
\begin{array}{|c|c||c|c|c|c||c|c|c|c||c|}

\hline
&&\multicolumn{4}{|c||}{}&
\multicolumn{4}{|c||}{}&\\

\multicolumn{1}{|c|}{\text{Degrees}}&
\multicolumn{1}{|c||}{\text{Con-}}&
\multicolumn{4}{|c||}{\text{Lyapunov}}&
\multicolumn{4}{|c||}{\text{Normalized Harder-}}&
\multicolumn{1}{|c|}{\text{Sum}}
\\

\multicolumn{1}{|c|}{\text{of }}&
\multicolumn{1}{|c||}{\text{nected}}&
\multicolumn{4}{|c||}{\text{exponents}}&
\multicolumn{4}{|c||}{\text{Narasimhan slopes}}&
\multicolumn{1}{|c|}{\overset{g}{\underset{j=1}{\sum}} \lambda_j=}
\\
\cline{3-10}

\multicolumn{1}{|c|}{\text{zeros}}&
\multicolumn{1}{|c||}{\text{comp.}}&
\multicolumn{1}{|c|}{\lambda_2}&
\multicolumn{1}{|c|}{\lambda_3}&
\multicolumn{1}{|c|}{\lambda_4}&
\multicolumn{1}{|c||}{\lambda_5}&
\multicolumn{1}{|c|}{w_2}&
\multicolumn{1}{|c|}{w_3}&
\multicolumn{1}{|c|}{w_4}&
\multicolumn{1}{|c||}{w_5}&
\multicolumn{1}{|c|}{=\overset{g}{\underset{j=1}{\sum}} w_j}
\\
[-\halfbls] &&&&&&&&&&\\
\hline &&&&&&&&&& \\ [-\halfbls] (8) & hyp &
0.799 & 0.586 & 0.306 & 0.087 &
\frac{7}{9} & \frac{5}{9} & \frac{3}{9} & \frac{1}{9} &
\frac{25}{9}
\\
[-\halfbls] &&&&&&&&&&\\
\hline &&&&&&&&&& \\ [-\halfbls]
(8) & even &
0.597 & 0.363 & 0.190 & 0.073 &
\frac{5}{9} & \frac{3}{9} & \frac{2}{9} & \frac{1}{9} &
\frac{20}{9}
\\
[-\halfbls] &&&&&&&&&&\\
\hline &&&&&&&&&& \\ [-\halfbls]
(8) & odd &
0.515 & 0.343 & 0.181 & 0.071 &
\frac{4}{9} & \frac{3}{9} & \frac{2}{9} & \frac{1}{9} &
\frac{19}{9}
\\
[-\halfbls] &&&&&&&&&&\\
\hline &&&&&&&&&& \\ [-\halfbls]
(6, 2) & odd &
0.521 & 0.369 & 0.212 & 0.089 &
\frac{3}{7} & \frac{1}{3} & \frac{2}{7} & \frac{1}{7}&
\frac{46}{21}
\\
[-\halfbls] &&&&&&&&&&\\
\hline &&&&&&&&&& \\ [-\halfbls]
(5, 3) & - &
0.562 & 0.376 & 0.216 & 0.096 &
\frac{1}{2} & \frac{1}{3} & \frac{1}{4} & \frac{1}{6} &
\frac{9}{4}
\\
[-\halfbls] &&&&&&&&&&\\
\hline &&&&&&&&&& \\ [-\halfbls]
(4, 4) & hyp &
0.819 & 0.639 & 0.390 & 0.152 &
\frac{4}{5} & \frac{3}{5} & \frac{2}{5} & \frac{1}{5} &
3
\\
[-\halfbls] &&&&&&&&&&\\
\hline &&&&&&&&&& \\ [-\halfbls]
(7, 1) & - &
0.560 & 0.378 & 0.207 & 0.082 &
\leq \frac{3}{4} & \leq \frac{1}{2} & \leq \frac{3}{8} & \leq \frac{1}{8}&
\frac{2423}{1088} \\
[-\halfbls] &&&&&&&&&&\\
\hline &&&&&&&&&& \\ [-\halfbls]
(6, 2) & even &
0.604 & 0.386 & 0.221 & 0.092 &
\leq \frac{5}{7} & \leq \frac{4}{7} & \leq \frac{1}{7} & \leq \frac{1}{7}&
\frac{178429}{77511}
\\
[-\halfbls] &&&&&&&&&&\\
\hline &&&&&&&&&& \\ [-\halfbls]
(6, 1, 1) & - &
0.563 & 0.397 & 0.230 & 0.094 &
\leq \frac{5}{7} & \leq \frac{1}{2} & \leq \frac{3}{7} & \leq \frac{1}{7}&
\frac{59332837}{25961866}
\\
[-\halfbls] &&&&&&&&&&\\
\hline &&&&&&&&&& \\ [-\halfbls]
(5, 2, 1) & - &
0.564 & 0.396 & 0.237 & 0.103 &
\leq \frac{2}{3} & \leq \frac{1}{2} & \leq \frac{1}{3} & \leq \frac{1}{6}&
\frac{4493}{1953}
\\
[-\halfbls] &&&&&&&&&&\\
\hline &&&&&&&&&& \\ [-\halfbls]
(5, 1, 1, 1) & - &
0.565 & 0.415 & 0.253 & 0.108 &
\leq \frac{2}{3} & \leq \frac{1}{2} & \leq \frac{1}{2} & \leq \frac{1}{6}&
\frac{103}{44}
\\
[-\halfbls] &&&&&&&&&&\\ \hline
\end{array}
$$
\caption{Some strata in genus $5$}
\end{table}


\begin{example}\label{ex}
We illustrate the numerical evidence for the Main Conjecture taking
the connected component with odd parity of the spin structure
of the stratum $\mathcal{H}^{odd}_5(6,2)$ as an example.

$$\begin{array}{rcccl}
 \lambda_1  =&1&=&1&=  w_1, \\
 \lambda_1+\lambda_2\approx& 1.52&\geq& 10/7&=w_1+w_2, \\
 \lambda_1+\lambda_2+\lambda_3\approx& 1.89 &\geq& 37/21&=w_1+w_2+w_3,\\
 \lambda_1+\lambda_2+\lambda_3+\lambda_4\approx &2.1&\geq &43/21&=w_1+w_2+w_3+w_4,\\
 \lambda_1+\lambda_2+\lambda_3+\lambda_4+\lambda_5=&46/21&=&46/21&=w_1+w_2+w_3+w_4+w_5.
\end{array}$$
Or, equivalently,
$$\begin{array}{rcccl}
\lambda_1+\lambda_2+\lambda_3+\lambda_4+\lambda_5=&46/21&=&46/21&=w_1+w_2+w_3+w_4+w_5,\\
 \lambda_2+\lambda_3+\lambda_4+\lambda_5\approx &25/21&= &25/21&=w_2+w_3+w_4+w_5,\\
  \lambda_3+\lambda_4+\lambda_5\approx& 0.67 &\leq& 16/21&=w_3+w_4+w_5,\\
  \lambda_4+\lambda_5\approx& 0.30&\leq& 3/7&=w_4+w_5, \\
 \lambda_5  \approx&0.09&\leq&1/7&=  w_5.\\
 \end{array}$$
\end{example}

\clearpage

\end{document}